	\titlespacing{\section}{0pt}{12pt}{0pt}
	\titlespacing{\subsection}{0pt}{6pt}{0pt}
\theoremstyle{plain}
\newtheorem{theorem}{Theorem}
\newtheorem{proposition}[theorem]{Proposition}
\newtheorem{lemma}[theorem]{Lemma}
\newtheorem*{conjecture*}{Conjecture}
\theoremstyle{definition}
\newtheorem{definition}[theorem]{Definition}
\theoremstyle{remark}
\definecolor{linkred}{rgb}{0.75,0,0}
\definecolor{linkblue}{rgb}{0,0,1}
\newcommand\blfootnote[1]{
  \begingroup
  \renewcommand\thefootnote{}\footnote{#1}
\addtocounter{footnote}{-1}
  \endgroup
}
\setlist{nolistsep}
\newcommand{\Z}{\mathbb{Z}}
\newcommand{\mmu}{\boldsymbol{\mu}}
\newcommand{\dashrule}[1][black]{
  \color{#1}\rule[\dimexpr.5ex-.2pt]{4pt}{.4pt}\xleaders\hbox{\rule{4pt}{0pt}\rule[\dimexpr.5ex-.2pt]{4pt}{.4pt}}\hfill\kern0pt
}
\newcommand{\rulecolor}[1]{
  \def\CT@arc@{\color{#1}}
}
\begin{document}

{\large \bfseries Counting non-crossing permutations on surfaces of any genus}

{\bfseries Norman Do, Jian He, and Daniel V. Mathews}

\emph{Given a surface with boundary and some points on its boundary, a polygon diagram is a way to connect those points as vertices of non-overlapping polygons on the surface. Such polygon diagrams represent non-crossing permutations on a surface with any genus and number of boundary components. If only bigons are allowed, then it becomes an arc diagram. The count of arc diagrams is known to have a rich structure. We show that the count of polygon diagrams exhibits the same interesting behaviours, in particular it is almost polynomial in the number of points on the boundary components, and the leading coefficients of those polynomials are the intersection numbers on the compactified moduli space of curves $\overline{\mathcal{M}}_{g,n}$.} 

\blfootnote{\emph{2010 Mathematics Subject Classification:} 
05A15, % Exact enumeration problems, generating functions
57M50 % Geometric structures on low-dimensional manifolds
%05A05, % Permutations, words, matrices
% 81S10 % Geometry and quantization, symplectic methods \\
\emph{Date:} \today \\ The first author was supported by Australian Research Council grant  DP180103891. The third author was supported by Australian Research Council grant DP160103085.}

~

\hrule

\setlength{\parskip}{0pt}
\tableofcontents
\setlength{\parskip}{6pt}

~

\hrule

\section{Introduction} \label{sec:intro}

A {\em polygon} on a connected compact oriented surface $S$ with boundary is an embedded (closed) disc bounded by a sequence of properly embedded arcs $P_1 P_2, P_2 P_3, \ldots, P_{m-1} P_m, P_m P_1$, where $P_1, P_2, \ldots, P_m \in \partial S$. The points $P_1, \ldots, P_m$ are called the \emph{vertices} of the polygon and the arcs $P_i P_{i+1}$ (with $i$ taken mod $m$) are its \emph{edges}. Given a finite set of \emph{marked points} $M \subset \partial S$, a {\em polygon diagram} on $(S,M)$ is a disjoint union of polygons on $S$ whose vertices are precisely the marked points $M$. See figure \ref{polygon-diagram} for an example. Two polygon diagrams $D_1, D_2$ on $(S,M)$ are {\em equivalent} if there is an orientation preserving homeomorphism $\phi: S\to S$ such that $\phi|_{\partial S}$ is the identity and $\phi(D_1) = D_2$. 

Polygon diagrams are closely related to non-crossing permutations. In this paper we count them.

\begin{figure}
\begin{center}
\includegraphics[scale=0.65]{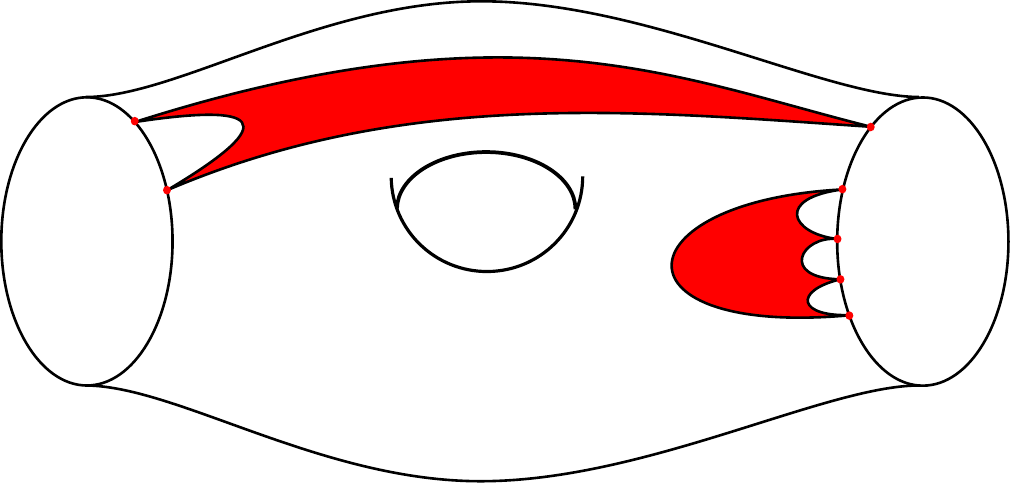}
\caption{A polygon diagram on $S_{1,2}$.}
\label{polygon-diagram}
\end{center}
\end{figure}

Denote by $S_{g,n}$ a connected compact oriented surface of genus $g$ with $n \geq 1$ boundary components, or just $S$ when $g$ and $n$ are understood. 
Label the boundary components of $S$ as $F_1, \ldots, F_n$. Since we will be performing cutting and pasting operations on polygon diagrams, it is often helpful to choose a single vertex $\mathbf{m}_i \in M \cap F_i$ to be a decorated marked point on each boundary component $F_i$ containing at least one vertex (i.e. such that $M \cap F_i \neq \emptyset$). Two polygon diagrams $D_1, D_2$ on $S$ can then be regarded as equivalent 
if there is an orientation preserving diffeomorphism of $S$ taking $D_1$ to $D_2$, such that each decorated marked point on $D_1$ is mapped to the decorated marked point of $D_2$ on the same boundary component.
Fixing the total number of vertices on each boundary component $F_i$ to be $\mu_i$ (i.e. $|M \cap F_i| = \mu_i$), let $P_{g,n}(\mu_1, \ldots, \mu_n)$ be number of equivalence classes of polygon diagrams on $(S,M)$. Clearly
$P_{g,n}$ only depends on $g,n,\mu_1, \ldots, \mu_n$ (not on the choice of particular $S$ or $M$) and is a symmetric function of the variables $\mu_1, \ldots, \mu_n$. 
\begin{proposition}\label{Pexamples}
\begin{align}
\label{eqn:P01_formula}
P_{0,1}(\mu_1)&=
\begin{cases}
\binom{2\mu_1-1}{\mu_1}\frac{2}{\mu_1+1}, & \mu_1 > 0 \\
1, & \mu_1 = 0
\end{cases}\\
\label{eqn:P02_formula}
P_{0,2}(\mu_1,\mu_2)&=
\begin{cases} \binom{2\mu_1-1}{\mu_1}\binom{2\mu_2-1}{\mu_2}\left(\frac{2\mu_1\mu_2}{\mu_1+\mu_2}+1\right), & \mu_1,\mu_2 > 0 \\ 
\binom{2\mu_1-1}{\mu_1}, & \mu_2=0 
\end{cases} \\
P_{0,3}(\mu_1,\mu_2,\mu_3)&= \binom{2\mu_1-1}{\mu_1}\binom{2\mu_2-1}{\mu_2}\binom{2\mu_3-1}{\mu_3}\left(2\mu_1\mu_2\mu_3+\sum_{i\neq j}\mu_i\mu_j+\sum^3_{i=1} \frac{\mu_i^2-\mu_i}{2\mu_i-1}+1\right) \\
P_{1,1}(\mu_1)&=\binom{2\mu-1}{\mu} \frac{1}{2\mu-1} \frac{\mu^3 + 3\mu^2 + 20\mu - 12}{12} \label{P11}
\end{align}
\end{proposition}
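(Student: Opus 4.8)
The plan is to reduce each count to an enumeration of non-crossing structures and to build the higher formulas out of the Catalan numbers. I would first record that, on a disc, a polygon diagram is exactly a non-crossing partition of the $\mu_1$ boundary marked points---each block being the vertex set of a unique embedded polygon, with singletons realised as monogons---so that $P_{0,1}(\mu_1) = C_{\mu_1} = \frac{1}{\mu_1+1}\binom{2\mu_1}{\mu_1}$, which is the claimed $\binom{2\mu_1-1}{\mu_1}\frac{2}{\mu_1+1}$ since $\binom{2\mu_1}{\mu_1} = 2\binom{2\mu_1-1}{\mu_1}$; the value $1$ at $\mu_1 = 0$ is the empty diagram. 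I would keep the Catalan generating function $c(x) = \sum_{\mu \ge 0} C_\mu x^\mu = \frac{1 - \sqrt{1-4x}}{2x}$ at hand, since it is the atom from which the remaining formulas are assembled.

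The engine for the higher cases is to record, for a polygon diagram, not only its underlying non-crossing partition but also the complementary regions of the surface cut along the polygons, and then to build $S_{g,n}$ from a disc by puncturing a complementary region (to add an empty boundary component) or attaching a band or handle between regions (to join boundaries or add genus). Concretely, a diagram realising $\mu_2 = 0$ on the second boundary of an annulus is a disc diagram together with a choice of the complementary region into which the empty hole is placed, so summing the number of such regions over all disc diagrams should yield $P_{0,2}(\mu_1, 0) = \binom{2\mu_1 - 1}{\mu_1}$ (this already checks out at $\mu_1 = 1, 2$). For $\mu_1, \mu_2 > 0$ I would split according to the polygons meeting both boundary components: cutting along them leaves a cyclic chain of disc regions each carrying an ordinary non-crossing partition, and resumming these against $c(x)$---with the decorated marked points used to pin down the cyclic position, so keeping the count finite---should produce the factor $\frac{2\mu_1\mu_2}{\mu_1+\mu_2} + 1$. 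The pair-of-pants count $P_{0,3}$ follows the same decomposition, now bookkeeping polygons incident to two or three boundary components, which is where the symmetric combination $2\mu_1\mu_2\mu_3 + \sum_{i \neq j}\mu_i\mu_j + \sum_i \frac{\mu_i^2 - \mu_i}{2\mu_i - 1} + 1$ appears.

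The main obstacle is $P_{1,1}$, where the surface has genus and the polygon edges may be non-separating arcs, so that cutting no longer merely splits off discs. Here I would peel off the polygon containing the decorated marked point $\mathbf{m}_1$ and classify the finitely many ways its edges can sit on the once-punctured torus: separating configurations reduce to the genus-zero counts already obtained, while the genus-reducing configurations drop the genus to $0$ and feed the $P_{0,\cdot}$ counts. The delicate points are verifying which arc systems bound an embedded disc, using the decorated marked point to eliminate the Dehn-twist ambiguity that would otherwise make the count infinite, and correctly weighting each genus-reducing cut; after resumming against $c(x)$ the contributions should collapse to $\binom{2\mu-1}{\mu}\frac{1}{2\mu-1}\cdot\frac{\mu^3 + 3\mu^2 + 20\mu - 12}{12}$. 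I expect the genus-one topological bookkeeping, rather than any single algebraic identity, to be the crux.
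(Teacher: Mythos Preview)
Your disc case and the $\mu_2=0$ annulus case match the paper's reasoning. For $P_{0,2}$ with both $\mu_i>0$ the paper does something close to your split: it quotes the Mingo--Nica count of \emph{connected} annular non-crossing permutations for the $\frac{2\mu_1\mu_2}{\mu_1+\mu_2}$ term, and handles the disconnected case as a product of two ``cuff diagrams'' (polygon diagrams on an annulus with no edges between inner-boundary vertices), whose count $L(b,0)=\frac12\binom{2b}{b}$ is established separately via a Przytycki-style arrow-diagram bijection.

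For $P_{0,3}$ and $P_{1,1}$, however, the paper takes a genuinely different route from your direct cut-and-paste. The key machinery is a \emph{pruned decomposition}: every polygon diagram on $S_{g,n}$ factors uniquely (up to a marked-point choice) into a \emph{pruned} diagram---one with no boundary-parallel edges---on the core surface, together with a cuff diagram on an annular collar of each boundary component. This yields
\[
P_{g,n}(\mu_1,\ldots,\mu_n)=\sum_{0\le\nu_i\le\mu_i} Q_{g,n}(\nu_1,\ldots,\nu_n)\prod_i \tfrac{1}{\bar\nu_i}L(\mu_i,\nu_i),
\]
with the cuff count $L(b,a)=a\binom{2b}{b-a}$ already in closed form. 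The paper then computes the pruned counts $Q_{0,3}$ and $Q_{1,1}$ directly---$Q_{0,3}$ by an exhaustive (but short) classification of non-boundary-parallel polygons on the pair of pants, $Q_{1,1}$ via a recursion that reduces to $Q_{0,2}$---and substitutes into the sum, which collapses using identities of the shape $\sum_{\nu}\nu^{2\alpha+1}\binom{2\mu}{\mu-\nu}=\binom{2\mu}{\mu}\cdot(\text{rational in }\mu)$.

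What this buys over your plan: the pruned/cuff split absorbs all the Catalan-type combinatorics into the explicit factor $L(\mu_i,\nu_i)$, so the topological classification happens only for pruned diagrams, where the number of configurations is small and finite (e.g.\ on the pair of pants, at most two triangles or one quadrilateral). Your approach of peeling off the polygon at $\mathbf m_1$ on the once-punctured torus is closer in spirit to the paper's recursion for $P_{g,n}$ itself (which it proves but does \emph{not} use to compute $P_{1,1}$); carrying that through directly would require the full genus-one arc-system case analysis you flag as the crux, whereas the paper's route sidesteps it by pushing the topology into $Q_{1,1}$, which is a cubic quasi-polynomial computed from a one-line sum. Your plan is plausible, but the resumming you allude to (``should collapse to\ldots'') is precisely the work that the pruned decomposition organises and that your sketch leaves unfilled.
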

Here we take the convention $\binom{-1}{0}=1$ when $\mu_i$ is $0$.

Suppose $D$ is a polygon diagram on $(S,M)$ where $S$ is a disc or an annulus, i.e. $(g,n) = (0,1)$ or $(0,2)$. Each boundary component $F_i$ inherits an orientation from $S$. Label the marked points of $M$ by the numbers $1,2,\ldots, |M| = \sum_{i=1}^n \mu_i$, in order around $F_1$ in the disc case, and in order around $F_1$ then $F_2$ in the annulus case. 
Orienting each polygon in agreement with $S$ induces a cyclic order on the vertices (and vertex labels) of each polygon, giving the cycles of a permutation $\pi$ of $\{1,\ldots \sum \mu_i\}$. Such a permutation is known as a
{\em non-crossing permutation} if $S$ is a disc, or {\em annular non-crossing permutation} if $S$ is an annulus. We say the diagram $D$ \emph{induces} or \emph{represents} the permutation $\pi$. 

Non-crossing permutations are well known combinatorial objects. It is a classical result 
that the number of non-crossing permutations on the disc is a Catalan number. Annular non-crossing permutations were (so far as we know) first 
introduced by King \cite{King1999}. They were studied in detail by Mingo--Nica \cite{MN2004}, Nica--Oancea \cite{NO2009}, Goulden--Nica--Oancea \cite{GNO2011}, 
Kim \cite{Kim2012} and Kim--Seo--Shin \cite{KSS2014}. 

In general, if we number the marked points $M$ from $1$ to $|M| = \sum_{i=1}^n \mu_i$ in order around the oriented boundaries $F_1$, then $F_2$, up to $F_n$, then in a similar way, a polygon diagram represents a non-crossing permutation on a surface with arbitrary genus and an arbitrary number of boundary components. This paper studies such non-crossing permutations via polygon diagrams.

The relation between permutation and genus here differs slightly from others in the literature. The notion of genus of a permutation $\pi$ in \cite{Jacques1968} and subsequent papers such as \cite{CH2013, CH2014, CH2018}, in our language, is the \emph{smallest} genus $g$ of a surface $S$ with \emph{one boundary component} on which a polygon diagram exists representing the permutation $\pi$; equivalently, it is the genus of a surface $S$ with one boundary component on which a polygon diagram exists representing $\pi$, such that all the components of $S \backslash D$ are discs. This differs again from the notion of genus of a permutation in \cite{CFHMMPS2002}.

Given a non-crossing permutation $\pi$ on the disc, it's clear that there is a unique polygon diagram $D$ (up to equivalence) representing $\pi$. Therefore $P_{0,1}(\mu)$ is also the $\mu$-th Catalan number. Uniqueness of representation is also true for \emph{connected} annular non-crossing partitions. An annular non-crossing partition is \emph{connected} if there is at least one edge between the two boundary components, i.e. from $F_1$ to $F_2$. Uniqueness of representation follows since an edge from $F_1$ to $F_2$ cuts the annulus into a disc. 
The number of connected annular non-crossing partitions counted in $P_{0,2}(\mu_1, \mu_2)$ is known to be \cite[cor. 6.8]{MN2004}
\begin{align*}
\binom{2\mu_1-1}{\mu_1}\binom{2\mu_2-1}{\mu_2}\left(\frac{2\mu_1\mu_2}{\mu_1+\mu_2}\right),
\end{align*}
which appears as a term in the formula (\ref{eqn:P02_formula}) for $P_{0,2}(\mu_1,\mu_2)$. A disconnected annular non-crossing permutation however 
can be represented by several distinct polygon diagrams, and $P_{0,2}$ can be viewed as the total count of annular non-crossing permutations with
multiplicities. Similarly, in general the $P_{g,n}(\mu, \ldots, \mu_n)$ can be regarded as counts with multiplicity of non-crossing permutations on arbitrary connected compact oriented surfaces with boundary.
 
If all polygons in $D$ are bigons, then collapsing them into arcs turns $D$ into an {\em arc diagram} previously studied by the first and third authors with Koyama 
\cite{DKM2017}. The count of arc diagrams exhibits quasi-polynomial behaviour, and the asymptotic behaviour is governed by intersection
numbers on the moduli space of curves. In this paper we show that the count of polygon diagrams has the same structure.
The arguments mirror those in \cite{DKM2017}. 

The formulae for $P_{g,n}$ in Proposition \ref{Pexamples} suggest that $P_{g,n}(\mu_1, \ldots, \mu_n)$ is a product of the $\binom{2\mu_i-1}{\mu_i}$, together with a rational function of the $\mu_i$'s. In fact we also know the form of the denominator. Moreover, the behaviour is \emph{better} than for arc diagrams in the sense that we obtain \emph{polynomials} rather than quasi-polynomials. 

\begin{theorem}\label{Pcount}
For $(g,n)\neq (0,1),(0,2)$, let $a=3g-3+n \geq 0$, and
\[
C_{g,n}(\mu)= 
\frac{1}{(2\mu-1)(2\mu-3)\dots(2\mu -2a -1)}\binom{2\mu-1}{\mu}
\]
Then 
\[
P_{g,n} (\mu_1, \ldots, \mu_n) = \left(\prod_{i=1}^{n}{C_{g,n}(\mu_i)}\right)F_{g,n}(\mu_1, \ldots, \mu_n)
\]
where $F_{g,n}$ is a polynomial with rational coefficients.
\end{theorem}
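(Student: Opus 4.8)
The plan is to mirror the strategy of \cite{DKM2017} for arc diagrams, proving the statement by induction on $a = 3g-3+n$. The base case is $a=0$, which forces $(g,n)=(0,3)$; here one simply divides the closed form for $P_{0,3}$ in Proposition~\ref{Pexamples} by $\prod_{i=1}^{3} C_{0,3}(\mu_i) = \prod_{i=1}^{3}\binom{2\mu_i-1}{\mu_i}(2\mu_i-1)^{-1}$, and the factors $2\mu_i-1$ exactly clear the denominators of the terms $\frac{\mu_i^2-\mu_i}{2\mu_i-1}$, leaving a polynomial $F_{0,3}$. The inductive engine is a cut-and-join recursion obtained by removing the polygon $Q$ that contains the decorated marked point $\mathbf{m}_1 \in F_1$. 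If $Q$ is a $k$-gon, then cutting $S$ along the boundary of the embedded disc $Q$ and separating the $k$ resulting pinch points produces a (possibly disconnected) surface $S''$ with $\chi(S'') = \chi(S) + (k-1)$; thus every $k$-gon with $k \geq 2$ strictly simplifies the surface, while a monogon ($k=1$, i.e. $\mathbf{m}_1$ a singleton block) leaves the topology unchanged but decreases $\mu_1$. Each piece of $S''$ therefore has strictly smaller $a$, or the same $a$ with strictly fewer marked points, so the induction is well-founded once we also admit the unstable inputs $P_{0,1}$ and $P_{0,2}$, whose closed forms are recorded in Proposition~\ref{Pexamples}.

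Carrying out this cut yields a recursion expressing $P_{g,n}(\mmu)$ as a finite sum, over the combinatorial type of $Q$ (the number of its vertices on each boundary component and the topological type of the resulting decomposition), of products of the counts $P_{g',n'}$ on the components of $S''$. The dependence on the $\mu_i$ enters through convolutions: the points of $M$ not used as vertices of $Q$ are redistributed onto the boundaries of $S''$, so each term involves sums over compositions of the $\mu_i$. The central binomial factors arise exactly as in the disc case: along each boundary $F_i$, the polygons that merely cut off discs contribute a local non-crossing-partition structure whose generating series resums, via the Catalan-type identity underlying $P_{0,1}(\mu)=\tfrac{2}{\mu+1}\binom{2\mu-1}{\mu}$, to the factor $\binom{2\mu_i-1}{\mu_i}$. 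I would isolate this by passing to the normalised counts $\widehat{P}_{g,n}(\mmu) = P_{g,n}(\mmu)\big/\prod_i \binom{2\mu_i-1}{\mu_i}$, in which the boundary resummations have been performed and only the ``essential'' part of the diagram survives.

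To turn the convolutive recursion for $\widehat{P}_{g,n}$ into something tractable I would apply a discrete Laplace transform in each variable $\mu_i$, converting the convolutions into ordinary products and sending the operation of adjoining a marked point to multiplication by an explicit kernel. In the transformed variables the recursion becomes algebraic, and the claim that $F_{g,n} = \widehat{P}_{g,n}\cdot \prod_i\big[(2\mu_i-1)(2\mu_i-3)\cdots(2\mu_i-2a-1)\big]$ is a polynomial translates into the statement that the transform of $\widehat{P}_{g,n}$ is a rational function whose only poles are simple poles at the $a+1$ prescribed half-integer values $\mu_i \in \{\tfrac12,\tfrac32,\dots,\tfrac{2a+1}{2}\}$ and no others. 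Granting the inductive hypothesis for all smaller $a$ (with the explicit transforms of $P_{0,1}$ and $P_{0,2}$ as seeds), each cut changes the number of these odd factors in a way dictated by the change in $a$, and matching this bookkeeping against the increments of $a$ is precisely what produces the stated product $(2\mu-1)(2\mu-3)\cdots(2\mu-2a-1)$.

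The main obstacle is genuine polynomiality: showing that the convolution sums introduce no extra poles beyond the $a+1$ odd-integer factors, and that those factors cancel to leave a polynomial rather than a proper rational function. Concretely, one must establish a lemma that a convolution $\sum_{p+q=\mu} A(p)B(q)$ of two expressions of the controlled form (product of central binomials, reciprocal odd-factorials, and a polynomial) is again of that form with the correct number of odd factors; this is where the special arithmetic of the transform kernels, together with the fact that $a$ increases by exactly the right amount at each cut, must be used. Secondary technical points to dispatch are the degenerate contributions of monogons and of boundaries with $\mu_i = 0$ (handled via the convention $\binom{-1}{0}=1$), the careful incorporation of the unstable pieces $P_{0,1}$ and $P_{0,2}$ into the recursion, and verifying the $(1,1)$ case against \eqref{P11} as a consistency check on the whole scheme.
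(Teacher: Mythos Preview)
Your plan diverges from the paper's argument at the structural level. The paper does not prove Theorem~\ref{Pcount} by induction through a cut-and-join recursion for $P_{g,n}$ at all. Instead, it first establishes that the \emph{pruned} count $Q_{g,n}$ is an odd quasi-polynomial of degree at most $6g-6+3n$ (Theorem~\ref{thm:quasipolynomiality}, proved via the separate recursion for $Q_{g,n}$ in Theorem~\ref{qrecursion}), and then invokes the pruned/cuff decomposition of Proposition~\ref{PQ}, which in the form~\eqref{PQ'} reads
\[
P'_{g,n}(\mu_1,\ldots,\mu_n)=\sum_{0\le\nu_i\le\mu_i} Q'_{g,n}(\nu_1,\ldots,\nu_n)\prod_{i=1}^n\binom{2\mu_i}{\mu_i-\nu_i}.
\]
The point is that this is a \emph{product over boundary components}: after expanding the quasi-polynomial $Q'_{g,n}$ into monomials, each $\nu_i$ is summed independently, and every factor is a one-variable sum of the shape $\sum_{\nu}\nu^{2\alpha+1}\binom{2\mu}{\mu-\nu}$ with $\alpha\le a$. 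Proposition~\ref{almostpoly} evaluates exactly these, giving $\binom{2\mu}{\mu}/\big[(2\mu-1)\cdots(2\mu-2\alpha-1)\big]$ times a polynomial; taking the common denominator over $\alpha\le a$ yields Theorem~\ref{Pcount}. No convolution in any $\mu_i$ ever appears.

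By contrast, your route through the $P$-recursion runs squarely into the obstacle you yourself flag, and the proposal does not surmount it. The recursion contains genuine convolutions $\sum_{i+j=\mu_1-1}P_{g_1,|I|+1}(i,\mmu_I)\,P_{g_2,|J|+1}(j,\mmu_J)$, and there is no off-the-shelf identity saying that a convolution of two expressions of the form $C_{g',n'}(\cdot)\times\text{polynomial}$ is again of that form: already the bare case $\sum_{i+j=\mu}\binom{2i}{i}\binom{2j}{j}=4^{\mu}$ is \emph{not} a central binomial coefficient times a polynomial in~$\mu$. Your ``discrete Laplace transform'' idea is pointed in a workable direction (the generating series of the $C_{g,n}$ are half-integer powers of $1-4x$, which do multiply well), but making it rigorous amounts to developing the meromorphic-differential machinery the paper only sketches at the very end, and would still have to absorb the non-convolutive terms $\mu_k P_{g,n-1}(\mu_1+\mu_k-1,\ldots)$ and the self-referential monogon term $P_{g,n}(\mu_1-1,\ldots)$. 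The missing idea in your plan is precisely the pruned/cuff decomposition: it is the device that trades the hard convolution in $\mu_1$ for an easy product over boundary labels, and thereby reduces everything to the elementary single-variable Proposition~\ref{almostpoly}.
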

Note that $F_{g,n}$ might have some common factors with $(2\mu_i-1)(2\mu_i-3)\dots(2\mu_i -2a -1)$, which would
simplify the formula for $P_{g,n}$. For example, $F_{1,1}$ has a factor
$(2\mu_1-3)$, hence only $(2\mu_1-1)$ appears on the denominator in \eqref{P11}.

The $P_{g,n}$ satisfy a recursion which allows the count on a surface to be computed from the counts on surfaces with simpler topology, i.e, either smaller genus $g$, or fewer boundary components $n$, or fewer vertices $\mu_i$.

Let $X= \{1, 2, 3, \ldots, n\}$. For each $I\subseteq X$, let $\mmu_I = \{\mu_i \; \mid \; i\in I\}$.
\begin{theorem}
\label{thm:P_recursion}
For non-negative integers $g$ and $\mu_1, \ldots, \mu_n$ such that $\mu_1 > 0$, we have
\begin{align}
P_{g,n} (\mu_1, \ldots, \mu_n)
&= P_{g,n} (\mu_1 - 1, \mmu_{X\setminus \{1\}}) + \sum_{k=2}^n \mu_k P_{g,n-1} ( \mu_1 + \mu_k - 1, \mmu_{X\setminus \{1,k\}} ) \nonumber \\
& \quad + \mathop{\sum_{i+j=\mu_1 - 1}}_{j>0} \bigg[ P_{g-1,n+1} (i,j, \mmu_{X\setminus \{1\}}) + \mathop{\sum_{g_1 + g_2 = g}}_{I \sqcup J = X\setminus \{1\}} P_{g_1, |I|+1} (i, \mmu_I) \, P_{g_2, |J|+1} (j, \mmu_j) \bigg]. \label{precur-eq}
\end{align}
\end{theorem}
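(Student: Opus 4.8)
The plan is to prove the recursion by a cut-and-paste bijection rooted at the decorated marked point $\mathbf{m}_1$, which exists since $\mu_1 > 0$. Given a diagram $D$, let $P$ be the unique polygon having $\mathbf{m}_1$ as a vertex, and let $e = \mathbf{m}_1 V$ be the edge of $P$ leaving $\mathbf{m}_1$, where $V$ is the vertex of $P$ following $\mathbf{m}_1$ in the cyclic order induced by the orientation (that is, $V = \pi(\mathbf{m}_1)$). Every diagram then falls into exactly one of four classes: $P$ is a monogon, so $V = \mathbf{m}_1$; the vertex $V$ lies on another boundary component $F_k$ with $k \neq 1$; $V$ lies on $F_1$ and the surgery below leaves the surface connected; or $V$ lies on $F_1$ and the surgery disconnects it. I claim these four classes are counted respectively, in order, by the four groups of terms on the right-hand side of \eqref{precur-eq}.

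The surgery is to cut $S$ along the properly embedded arc $e$, absorb the copy of $e$ bounding $P$ into $\partial S$, and delete the vertex $\mathbf{m}_1$, so that $P$ descends to a polygon with one fewer vertex while the other polygons of $D$, being disjoint from $e$, are carried along unchanged. Cutting along a properly embedded arc raises the Euler characteristic by $1$, and since $\chi(S_{g,n}) = 2 - 2g - n$, a short computation pins down the topology of the cut surface in each case. When $V \in F_k$ the cut merges $F_1$ and $F_k$ into one boundary component carrying $\mu_1 + \mu_k - 1$ marked points, and the $\chi$-bookkeeping forces the new surface to be $S_{g,n-1}$, producing the term $P_{g,n-1}(\mu_1 + \mu_k - 1, \mmu_{X \setminus \{1,k\}})$. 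When $V \in F_1$ the cut instead splits $F_1$ into two boundary components; if the result stays connected it is $S_{g-1,n+1}$, and if it disconnects it is a disjoint union $S_{g_1,|I|+1} \sqcup S_{g_2,|J|+1}$ with $g_1 + g_2 = g$ and $I \sqcup J = X \setminus \{1\}$, matching the genus and boundary labels in the last two terms.

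To recover the stated multiplicities I would construct the inverse gluing maps and count their preimages. In the monogon class, deleting $\mathbf{m}_1$ and its monogon is inverse to inserting a monogon at a fresh decorated point, a clean bijection with the set of all diagrams counted by $P_{g,n}(\mu_1 - 1, \mmu_{X \setminus \{1\}})$. In the remaining classes the $\mu_1 - 1$ marked points of $F_1$ other than $\mathbf{m}_1$ are distributed among the new boundary component(s): the side carrying $V$ must be nonempty, which is the source of the constraint $j > 0$, while the other side may be empty, so that $i$ ranges over $0, 1, \ldots, \mu_1 - 1 - j$. The factor $\mu_k$ in the merging term arises because reconstructing the diagram requires restoring the decorated point $\mathbf{m}_k$ on $F_k$, whose position is forgotten under the merge and may be any of the $\mu_k$ marked points destined for $F_k$; likewise the sums over $g_1 + g_2 = g$ and $I \sqcup J = X \setminus \{1\}$ in the disconnected term enumerate the ways the genus and the remaining boundary components are apportioned between the two pieces.

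The main obstacle is verifying that each of these maps is a genuine bijection once the decorated points are tracked correctly. The polygon structure, as opposed to the arc structure of \cite{DKM2017}, is the source of the extra care: after cutting along $e$ and absorbing it one must check that $P$ descends to a well-defined polygon with one fewer vertex and that the two neighbours of $\mathbf{m}_1$ in $P$ reattach unambiguously, and one must choose canonical decorated points on the newly created boundary components so that the forward and reverse constructions are mutually inverse. Handling the degenerate cases (empty new components with $i = 0$, and the monogon class) and confirming that the $j > 0$ convention together with the decorations eliminates all double counting in the genus-reducing and the disconnected terms is where the argument must be made fully rigorous; the Euler characteristic computations themselves are routine.
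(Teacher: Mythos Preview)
Your proposal is correct and follows essentially the same cut-and-paste argument as the paper: both root at the outgoing edge $\gamma$ from $\mathbf{m}_1$, distinguish the monogon case from the cases where $\gamma$ lands on $F_k$, on $F_1$ non-separating, or on $F_1$ separating, and obtain the four groups of terms by cutting along $\gamma$ and collapsing it to a single vertex. The paper makes the bijections explicit by specifying the new decorated marked points (e.g.\ $\sigma(\mathbf{m}_1)$ on the new component $F'_0$) and spelling out the inverse gluing, which is exactly the bookkeeping you flag as the place where your sketch needs to be made rigorous.
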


An edge $P_1P_2$ is {\em boundary parallel} if it cuts off a disc from the surface $S$. It is easy to create polygons using edges that are parallel to the same boundary component. The counts of these polygons are clearly combinatorial in nature instead of reflecting the underlying topology of $S$. Therefore from a topological point of view, it is natural to count polygon diagrams where none of the edges are boundary parallel. We call such a diagram a {\em pruned polygon diagram}. Let the count of pruned polygon diagrams be $Q_{g,n}(\mu_1, \ldots, \mu_n)$, i.e. the number of equivalence classes of pruned polygon diagrams on a surface of genus $g$, with $n$ boundary components, containing $\mu_1, \ldots, \mu_n$ marked points respectively. Clearly $Q_{g,n}(\mu_1, \ldots, \mu_n)$ is also a symmetric function of $\mu_1, \ldots, \mu_n$. As the name suggests, the relationship between $P_{g,n}$ and $Q_{g,n}$ mirrors that of Hurwitz numbers and pruned Hurwitz numbers \cite{DN2013}. It also mirrors the relationship between the counts of arc diagrams $G_{g,n}$ and non boundary-parallel arc diagrams $N_{g,n}$ in \cite{DKM2017}; we call the latter \emph{pruned arc diagrams}.

We call a function $f(\mu_1, \ldots, \mu_n)$ a \emph{quasi-polynomial} if it is given by a family of polynomial functions, depending on whether each of the integers $\mu_1, \ldots, \mu_n$ is zero, odd, or even (and nonzero).
In other words, a quasi-polynomial can be viewed as a collection of $3^n$ polynomials, 
depending on whether each $\mu_i$ is zero, odd, or nonzero even. 
Our definition of a quasi-polynomial differs slightly from the standard definition, in that $0$ is treated as a separate case rather than an even number.
More precisely, for each partition $X = X_e \sqcup X_o \sqcup X_\emptyset$, there is
a single polynomial $f^{(X_e,X_o,X_\emptyset)}( \mmu_{X_e}, \mmu_{X_o})$ such that $f(\mu_1, \ldots, \mu_n)=f^{(X_e,X_o,X_\emptyset)}(\mmu_{X_e}, \mmu_{X_o} )$ whenever $\mu_i = 0$ for $i\in X_\emptyset$, $\mu_i$ is nonzero and even for $i \in X_e$, and $\mu_i$ is odd for $i \in X_o$. (Here as above, for a set $I \subseteq X$, $\mmu_I = \{ \mu_i \; \mid \; i \in I \}$.)  A quasi-polynomial is \emph{odd} if
each $f^{(X_e,X_o,X_\emptyset)}(\mmu_{X_e}, \mmu_{X_o})$ is an odd polynomial with respect to each $\mu_i\in X_e\sqcup X_o$.

\begin{theorem}
\label{thm:quasipolynomiality}
For $(g,n) \neq (0,1)$ or $(0,2)$, $Q_{g,n}(\mu_1, \ldots, \mu_n)$ is an odd quasi-polynomial. 
\end{theorem}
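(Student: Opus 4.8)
The plan is to reduce the statement to the already-established structure of $P_{g,n}$ in Theorem \ref{Pcount} by setting up an explicit, invertible transform relating $P_{g,n}$ and $Q_{g,n}$. The transform should come from a boundary-parallel decomposition: every polygon diagram is obtained from an essentially unique pruned diagram by attaching boundary-parallel polygons, and along a single boundary component the attached boundary-parallel material is itself just a polygon diagram in a collar disc, hence is governed by a Catalan-type generating function. Reading around each boundary $F_i$ in turn and letting $\nu_i \le \mu_i$ be the number of surviving (core) vertices, this expresses $P_{g,n}(\mu_1, \ldots, \mu_n)$ as a sum over $\nu_1, \ldots, \nu_n$ of $Q_{g,n}(\nu_1, \ldots, \nu_n)$ weighted by a product over $i$ of a one-variable insertion kernel. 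The first task is to pin down this kernel exactly; I expect a careful case analysis of how boundary-parallel polygons nest and attach (including the degenerate monogon case and the requirement that the core meet every boundary so that $S$ stays connected) to be needed here, since these conventions change the kernel and hence the base of the whole argument.

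Granting the kernel, the key structural point is that it factors over the boundary components and is triangular with unit diagonal in each variable $\mu_i$, so it is invertible over $\mathbb{Q}$. I would carry out the inversion using the functional equation of the Catalan generating function to obtain a closed-form expression for $Q_{g,n}(\nu_1, \ldots, \nu_n)$ as a finite signed combination, with binomial weights, of the values $P_{g,n}(\mu_1, \ldots, \mu_n)$ for $\mu_i \le \nu_i$. Because this inverse transform again factors over the boundaries, and because Theorem \ref{Pcount} gives $P_{g,n} = \bigl(\prod_i C_{g,n}(\mu_i)\bigr) F_{g,n}$ with $F_{g,n}$ polynomial, expanding $F_{g,n}$ into monomials reduces everything to a single-variable claim: the one-boundary transform must send $\binom{2\mu-1}{\mu}\,\mu^{k}/\bigl((2\mu-1)(2\mu-3)\cdots(2\mu-2a-1)\bigr)$ to an odd quasi-polynomial in the output variable, for each $k \ge 0$ and $a = 3g-3+n$.

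This single-variable statement is the crux, and I would isolate it as a lemma. Its proof should show three things. First, the transform cancels the binomial coefficient $\binom{2\mu-1}{\mu}$: after the Catalan substitution the relevant generating function becomes algebraic-rational, so the transcendental part of the growth disappears and only polynomial (in fact quasi-polynomial) behaviour survives. Second, the denominator factors $(2\mu-2j-1)$ coming from $C_{g,n}$ must cancel so that one obtains a genuine polynomial rather than merely a rational function on each residue class; the splitting of $\mu$ into the cases zero, odd, and nonzero even is exactly what produces the three polynomial pieces of the quasi-polynomial. Third, oddness of each piece should follow from a reflection symmetry $\mu \mapsto -\mu$ of the transformed binomial identities. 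The base cases $(g,n)$ of small complexity, together with the recursion of Theorem \ref{thm:P_recursion} if an inductive anchor is required, should close the argument, and the exclusion of $(0,1)$ and $(0,2)$ is precisely where the transform and these parity statements degenerate.

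I expect the lemma of the previous paragraph, and within it the oddness together with the genuine cancellation of the denominators, to be the main obstacle. Quasi-polynomiality alone is comparatively soft once the transform is inverted and Theorem \ref{Pcount} is invoked, since one only needs the binomial to be cancelled and the residual growth to be polynomial on each parity class. Proving that the surviving polynomials contain no even powers, and that the factors $(2\mu-1)(2\mu-3)\cdots(2\mu-2a-1)$ divide out cleanly in every case, requires controlling the exact binomial and partial-fraction identities produced by the transform rather than just their leading asymptotics, and this is where I anticipate the real work to lie.
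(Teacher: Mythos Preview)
Your approach has a circularity problem. You want to deduce the odd quasi-polynomiality of $Q_{g,n}$ from Theorem \ref{Pcount}, but in the paper's logical order Theorem \ref{Pcount} is proved \emph{after} Theorem \ref{thm:quasipolynomiality} and explicitly uses it: the proof of Theorem \ref{Pcount} begins by writing ``Since $Q_{g,n}$ is a quasi-polynomial, so is $Q'_{g,n}$. Separating $Q'_{g,n}$ into monomials\ldots'' and then applies Proposition \ref{almostpoly} term by term. So the structure of $P_{g,n}$ that you are treating as input is, in this paper, output of the very theorem you are trying to prove. Unless you can supply an independent proof of Theorem \ref{Pcount} (e.g.\ directly from the $P$-recursion of Theorem \ref{thm:P_recursion}), the argument does not get off the ground.

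The paper takes a completely different route, going through $Q_{g,n}$ directly rather than via $P_{g,n}$. It establishes a topological recursion for $Q_{g,n}$ itself (Theorem \ref{qrecursion}), then proves Theorem \ref{thm:quasipolynomiality} by induction on $2g-2+n$: the base cases $(0,3)$ and $(1,1)$ are computed explicitly, and for the inductive step one freezes the parities/vanishings of $\mu_1,\ldots,\mu_n$, splits the recursion into finitely many pieces according to the parities of the new summation variables $i,j,m$, and reduces to the elementary fact (Proposition \ref{lemma-odd-induction}) that sums of the form $\sum_{i_1+\cdots+i_m=n} i_1^{k_1}\cdots i_m^{k_m}$ over integers of fixed parity are odd polynomials in $n$ when all $k_j$ are odd. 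The $\widetilde{\Sigma}$ term is handled by observing it equals $\mu_k\bigl(P(\mu_1+\mu_k)+P(\mu_1-\mu_k)\bigr)$ for an odd polynomial $P$, which is visibly odd in both variables. Even if you could break the circularity, your single-variable inversion lemma (cancellation of the binomial, clean divisibility by the $(2\mu-2j-1)$ factors, and oddness under $\mu\mapsto-\mu$) would be essentially new content not in the paper, and you correctly flag it as the real obstacle.
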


The pruned diagram count captures topological information of $S_{g,n}$. The highest 
degree coefficients of the quasi-polynomial $Q_{g,n}$ are determined by intersection numbers in the compactified moduli 
space $\overline{\mathcal{M}}_{g,n}$.
\begin{theorem}\label{intersection}
For $(g,n) \neq (0,1)$ or $(0,2)$, $Q_{g,n}^{(X_e,X_o,X_\emptyset)}(\mu_1, \ldots, \mu_n)$ has degree $6g-6+3n$.
The coefficient $c_{d_1,\ldots,d_n}$ of the highest degree monomial $\mu_1^{2d_1+1}\cdots\mu_n^{2d_n+1}$
is independent of the partition $(X_e,X_o)$, and
$$c_{d_1,\ldots,d_n} = \frac{1}{2^{g-1}d_1!\cdots d_n!}\int_{\overline{\mathcal{M}}_{g,n}}\psi_1^{d_1}\cdots\psi_n^{d_n}.$$
\end{theorem}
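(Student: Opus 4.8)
The plan is to establish the degree statement and the coefficient formula simultaneously, by induction on the complexity $3g-3+n = \dim \overline{\mathcal{M}}_{g,n}$. The first observation is that the two claims are really one: the monomial $\mu_1^{2d_1+1}\cdots\mu_n^{2d_n+1}$ has total degree $2\sum_i d_i + n$, which equals $6g-6+3n$ exactly when $\sum_i d_i = 3g-3+n = \dim\overline{\mathcal{M}}_{g,n}$, i.e.\ precisely when $\int_{\overline{\mathcal{M}}_{g,n}}\psi_1^{d_1}\cdots\psi_n^{d_n}$ is allowed to be nonzero. So asserting $\deg Q_{g,n}=6g-6+3n$ is the same as the dimension constraint on the intersection numbers, and it suffices to track the coefficients of the top stratum $\sum_i d_i = 3g-3+n$.

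The second step is to obtain a recursion for $Q_{g,n}$. The recursion of Theorem~\ref{thm:P_recursion} is proved by analysing the polygon incident to the decorated point on $F_1$, and its first term $P_{g,n}(\mu_1-1,\ldots)$ records exactly the case where the relevant edge is boundary-parallel; this is the term responsible for the $\binom{2\mu_i-1}{\mu_i}$ growth of $P$. Since pruned diagrams forbid boundary-parallel edges, the pruning relationship between $P$ and $Q$ (the analogue of the passage from Hurwitz to pruned Hurwitz numbers) converts Theorem~\ref{thm:P_recursion} into a recursion for $Q_{g,n}$ in which this self-term is \emph{absent} and the merging, genus-dropping and separating terms acquire polynomial weights in the boundary lengths. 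Deriving this recursion, and in particular pinning down those weights, is the first substantive task; its absence of a self-term is what makes $Q_{g,n}$ a genuine (quasi-)polynomial and forces the surviving terms to climb all the way to degree $6g-6+3n$.

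The third step is the leading-order extraction. Using Theorem~\ref{thm:quasipolynomiality}, write each $Q_{g,n}^{(X_e,X_o,X_\emptyset)}$ as an odd quasi-polynomial and isolate its top homogeneous part. In the recursion the discrete convolutions are replaced by their leading asymptotics, governed by the Beta integral
\[
\sum_{i+j=N} i^{p}\,j^{q}\ \sim\ \frac{p!\,q!}{(p+q+1)!}\,N^{p+q+1},
\]
while the merging term is expanded by the binomial theorem. This is the mechanism that both raises the degree by the exact amount needed and manufactures the factorials. Writing $c^{(g)}_{d_1,\ldots,d_n}$ for the sought coefficients and substituting the ansatz $c^{(g)}_{d_1,\ldots,d_n}=\frac{1}{2^{g-1}d_1!\cdots d_n!}\langle\tau_{d_1}\cdots\tau_{d_n}\rangle_g$, the resulting relation should reduce to the Dijkgraaf--Verlinde--Verlinde form of the Witten--Kontsevich recursion,
\begin{align*}
\langle\tau_{d_1}\textstyle\prod_{k\geq 2}\tau_{d_k}\rangle_g &= \sum_{j=2}^n \frac{(2d_1+2d_j-1)!!}{(2d_1+1)!!\,(2d_j-1)!!}\,\langle\tau_{d_1+d_j-1}\textstyle\prod_{k\neq 1,j}\tau_{d_k}\rangle_g \\
&\quad + \tfrac12\sum_{a+b=d_1-2}\frac{(2a+1)!!\,(2b+1)!!}{(2d_1+1)!!}\Big[\langle\tau_a\tau_b\textstyle\prod_{k\geq 2}\tau_{d_k}\rangle_{g-1}+\sum\langle\tau_a\cdots\rangle\langle\tau_b\cdots\rangle\Big].
\end{align*}
The normalization $2^{g-1}$ is forced by the genus bookkeeping: under $g\mapsto g-1$ (non-separating term) and $g\mapsto g_1+g_2$ (separating term) it drops one factor of $2$, matching the $\tfrac12$ in front of those terms, whereas the merging term preserves $g$ and needs none. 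Reconciling the double factorials above with the factorials produced by the Beta asymptotics and by the $d_i!$ in the ansatz is the heart of the argument, and the step I expect to be the main obstacle.

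Finally, the induction is anchored at $(g,n)=(0,3)$ and $(1,1)$, the minimal cases once $(0,1),(0,2)$ are excluded (one checks that each reduces only to the excluded topologies). Here I would compute $Q$ directly by stripping the boundary-parallel contributions from the formulas for $P_{0,3}$ and $P_{1,1}$ in Proposition~\ref{Pexamples}, and verify $c_{0,0,0}=2=\tfrac{1}{2^{-1}}\langle\tau_0^3\rangle_0$ and $c_1=\tfrac{1}{24}=\langle\tau_1\rangle_1$. Independence from the partition $(X_e,X_o)$ then follows for free: neither the leading-order recursion nor the base cases ever reference the parities, since the integer shifts $\mu_1\mapsto\mu_1-1$ and $i+j=\mu_1-1$ only perturb lower-order terms. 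Hence the top coefficient $c_{d_1,\ldots,d_n}$ is common to all $3^n$ constituent polynomials, completing the proof.
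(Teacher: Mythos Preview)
Your approach is genuinely different from the paper's and considerably more ambitious. The paper does \emph{not} verify the DVV recursion directly. Instead it establishes a separate recursion for $Q_{g,n}$ by a topological cut-and-paste argument (Theorem~\ref{qrecursion}), extracts its top-degree part (equation~\eqref{toprecur}), and then compares this line by line with the top-degree recursion for the pruned arc-diagram counts $N_{g,n}$ from \cite{DKM2017} (proposition~\ref{prop:Ngn_recursion}). The two recursions have identical shape except that the $Q$ recursion sums $m$ over both parities while the $N$ recursion sums only over even $m$; by Proposition~\ref{lemma-odd-induction} the odd-$m$ and even-$m$ contributions have equal leading terms, giving $[Q_{g,n}]^{\mathrm{top}}=2^{4g+2n-5}[N_{g,n}]^{\mathrm{top}}$. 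The intersection-number formula then follows immediately from the already-known Theorem~\ref{Nintersection} for $N_{g,n}$, and the arithmetic $2^{4g+2n-5}\cdot 2^{-(5g-6+2n)}=2^{1-g}$ produces the $\tfrac{1}{2^{g-1}}$.

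Your route---reduce the top of the $Q$ recursion to DVV and invoke Witten--Kontsevich---is in principle sound and has the advantage of being self-contained, but two of your steps are not yet under control. First, you propose to obtain the $Q$ recursion by pushing the $P$ recursion through the pruning relation; the paper does not do this, and the $Q$ recursion it actually proves (Theorem~\ref{qrecursion}) is derived by a direct and rather delicate case analysis on the surface, not by inverting the convolution in Proposition~\ref{PQ}. It is not clear your shortcut works cleanly. Second, you yourself flag the matching of Beta-integral factorials against the DVV double factorials as ``the main obstacle,'' and you have not carried it out; until that computation is done the argument is a plan rather than a proof. The paper sidesteps both difficulties by outsourcing the DVV verification to \cite{DKM2017}.
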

Here $\psi_i$ is the Chern class of the $i$-th tautological line bundle over the compactified moduli space $\overline{\mathcal{M}}_{g,n}$ of genus $g$ curves with $n$ marked points.

\section{Preliminaries}
\label{sec:preliminaries}

In this section we state some identities required in the sequel. 

\subsection{Combinatorial identities}
\label{sec:comb_id}

The combinatorial identities required involve sums of binomial coefficients, multiplied by polynomials. The sums have a polynomial structure, analogous to the sums in \cite[defn. 5.5]{DKM2017} and \cite{NS2014}.

\begin{proposition}
\label{almostpoly}
For any integer $\alpha \geq 0$ there are polynomials $P_{\alpha}$ and $Q_{\alpha}$ such that
\begin{align*}
\sum_{0\leq i\leq n \text{ even}}{i^{2\alpha+1}\binom{2n}{n-i}} &= \frac{\binom{2n}{n}}{(2n-1)(2n-3)\dots(2n-2\alpha-1)}P_{\alpha}(n) \\
\sum_{0\leq i\leq n \text{ odd}}{i^{2\alpha+1}\binom{2n}{n-i}} &= \frac{\binom{2n}{n}}{(2n-1)(2n-3)\dots(2n-2\alpha-1)}Q_{\alpha}(n).
\end{align*}
\end{proposition}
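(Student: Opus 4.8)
The plan is to prove a single, slightly stronger statement from which both identities follow, by passing to the symmetric and alternating combinations. Write $b_i = \binom{2n}{n-i}$ and set
$$\Sigma_\alpha(n) = \sum_{i=0}^n i^{2\alpha+1} b_i, \qquad A_\alpha(n) = \sum_{i=0}^n (-1)^i i^{2\alpha+1} b_i,$$
so that the even and odd sums of the proposition are $\tfrac12(\Sigma_\alpha + A_\alpha)$ and $\tfrac12(\Sigma_\alpha - A_\alpha)$. I will show by induction on $\alpha$ that $\Sigma_\alpha$ equals $\binom{2n}{n}/[(2n-1)(2n-3)\cdots(2n-2\alpha+1)]$ times a polynomial (only $\alpha$ factors in the denominator), while $A_\alpha$ equals $\binom{2n}{n}/[(2n-1)(2n-3)\cdots(2n-2\alpha-1)]$ times a polynomial (the full $\alpha+1$ factors). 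Taking half-sums then yields $P_\alpha$ and $Q_\alpha$ over the common denominator $(2n-1)\cdots(2n-2\alpha-1)$.

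The engine is the contiguous relation
$$(n+1-i)\,b_{i-1} + (n+1+i)\,b_{i+1} = 2n\,b_i,$$
which follows at once from the elementary identity $(n-i)\binom{2n}{n-i} = (n+i+1)\binom{2n}{n-i-1}$ and holds for all $i$ (with $b_{-1}=b_1$). Multiplying it by $i^{2\alpha+1}$, respectively by $(-1)^i i^{2\alpha+1}$, summing over $i\ge0$, and re-indexing the two shifted sums, the weight $0^{2\alpha+1}=0$ kills the lower boundary while $b_{n+1}=0$ kills the upper one, leaving only an explicit correction $\pm n\,b_0$. Both shifted sums reassemble into $\sum_j U_\alpha(j,n)\,b_j$, where $U_\alpha(j,n) = (j+1)^{2\alpha+1}(n-j) + (j-1)^{2\alpha+1}(n+j)$ is an odd polynomial in $j$ of degree $2\alpha+1$; hence $\sum_j U_\alpha b_j$ and $\sum_j (-1)^j U_\alpha b_j$ expand in the $\Sigma_\beta$ and $A_\beta$ with $\beta\le\alpha$ and coefficients $u_{\alpha,\beta}(n)$ that are polynomials in $n$ of degree at most one. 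The crux is the diagonal coefficient: a short expansion gives the coefficient of $j^{2\alpha+1}$ in $U_\alpha$ equal to $2(n-2\alpha-1)$. After moving the $\beta=\alpha$ term to the left this produces
$$2(2\alpha+1)\,\Sigma_\alpha = \sum_{\beta<\alpha} u_{\alpha,\beta}(n)\,\Sigma_\beta + n\binom{2n}{n}, \qquad 2(2n-2\alpha-1)\,A_\alpha = -\sum_{\beta<\alpha} u_{\alpha,\beta}(n)\,A_\beta - n\binom{2n}{n}.$$
The base case $\alpha=0$ reads $\Sigma_0 = \tfrac n2\binom{2n}{n}$ and $A_0 = -\tfrac n2\binom{2n}{n}/(2n-1)$, matching the claimed forms.

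The induction is then immediate. For $\Sigma_\alpha$ the left-hand factor $2(2\alpha+1)$ is a nonzero constant, so dividing by it introduces no new denominator factor and $\Sigma_\alpha$ inherits the denominator $(2n-1)\cdots(2n-2\alpha+1)$ carried by the $\beta=\alpha-1$ term, the inhomogeneous $n\binom{2n}{n}$ contributing only to the numerator. For $A_\alpha$ the left-hand factor is $2(2n-2\alpha-1)$, and dividing by it appends exactly the factor $(2n-2\alpha-1)$ to the denominator $(2n-1)\cdots(2n-2\alpha+1)$ of the lower $A_\beta$, producing $(2n-1)\cdots(2n-2\alpha-1)$ as required. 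Reassembling $E_\alpha,O_\alpha = \tfrac12(\Sigma_\alpha\pm A_\alpha)$ over this common denominator gives the polynomials $P_\alpha,Q_\alpha$. I expect the main obstacle to be purely the bookkeeping: isolating the diagonal coefficient $2(n-2\alpha-1)$ of $U_\alpha$ (which is precisely what manufactures the denominator factor for $A_\alpha$) and checking that the lower coefficients $u_{\alpha,\beta}(n)$ together with the $n\binom{2n}{n}$ term combine over the stated common denominator without spoiling polynomiality of the numerators. A conceptual point worth flagging is that one must work with the $\Sigma/A$ decomposition rather than with the even and odd sums directly: the very relation that yields the factor-producing recursion for $A_\alpha$ yields only a factorless recursion for $\Sigma_\alpha$, so it is the alternating part alone that supplies the extra denominator factor.
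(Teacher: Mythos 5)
Your proof is correct, and it takes a genuinely different route from the paper's. The paper argues directly on the even and odd sums and inducts on $\alpha$ by lowering $n$: its key identity is $(n^2-i^2)\binom{2n}{n-i}=2n(2n-1)\binom{2n-2}{(n-1)-i}$, so that $i^{2\alpha+3}\binom{2n}{n-i}=n^2i^{2\alpha+1}\binom{2n}{n-i}-2n(2n-1)i^{2\alpha+1}\binom{2n-2}{(n-1)-i}$, which yields the clean two-term recursions $P_{\alpha+1}(n)=n^2\left[(2n-2\alpha-3)P_{\alpha}(n)-(2n-1)P_{\alpha}(n-1)\right]$ and $Q_{\alpha+1}(n)=n^2\left[(2n-2\alpha-3)Q_{\alpha}(n)-(2n-1)Q_{\alpha}(n-1)\right]$, the base case $\alpha=0$ being a telescoping rearrangement of $i\binom{2n}{n-i}$. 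You instead stay at fixed $n$, pass to the symmetric/alternating pair $(\Sigma_\alpha,A_\alpha)$, and induct on $\alpha$ via the contiguity relation $(n+1-i)b_{i-1}+(n+1+i)b_{i+1}=2nb_i$; I verified that relation, the boundary terms $\pm n\binom{2n}{n}$, the diagonal coefficient $2(n-2\alpha-1)$ of $U_\alpha$, and both recursions $2(2\alpha+1)\Sigma_\alpha=\sum_{\beta<\alpha}u_{\alpha,\beta}(n)\Sigma_\beta+n\binom{2n}{n}$ and $2(2n-2\alpha-1)A_\alpha=-\sum_{\beta<\alpha}u_{\alpha,\beta}(n)A_\beta-n\binom{2n}{n}$, and the denominator bookkeeping goes through exactly as you say (the only imprecision is harmless: the $\beta=\alpha-1$ term of the $\Sigma$-recursion carries $\alpha-1$, not $\alpha$, denominator factors, which only strengthens your claim). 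As for what each approach buys: the paper's recursion is two-term and is what it actually uses to extract $P_0,Q_0,P_1,Q_1$ in \eqref{PQ01}--\eqref{comb_id_3o}, whereas your recursion is triangular in $\alpha$ and less convenient for explicit values; on the other hand, your decomposition exposes a structural fact invisible in the paper's argument: since the $\Sigma$-recursion has the constant prefactor $2(2\alpha+1)$, the full sum $\Sigma_\alpha=\sum_{i=0}^{n}i^{2\alpha+1}\binom{2n}{n-i}$ is $\binom{2n}{n}$ times an honest polynomial with no denominator at all, so every factor of $(2n-1)\cdots(2n-2\alpha-1)$ is manufactured by the alternating part alone, and consequently $P_\alpha+Q_\alpha$ must be divisible by $(2n-1)\cdots(2n-2\alpha-1)$. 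Incidentally, this divisibility is a useful cross-check on the paper itself: it fails for the value $P_1(n)=(n^2-1)^2n^2$ displayed in \eqref{PQ01} and \eqref{comb_id_e3}, which appears to be a typo for $P_1(n)=(n^2-n)^2=n^2(n-1)^2$, the value produced by the paper's own recursion \eqref{recursion_for_P} (check at $n=2$: the even sum is $8$, forcing $P_1(2)=4$); with that correction one gets $P_1+Q_1=\tfrac{n^2}{2}(2n-1)(2n-3)$, i.e. $\Sigma_1=\tfrac{n^2}{2}\binom{2n}{n}$, consistent with your result.
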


In particular, when $\alpha = 0, 1$ we have 
\begin{equation}
\label{PQ01}
P_0(n) = \frac{1}{2}(n^2 - n), \quad
Q_0 (n) = \frac{1}{2} n^2, \quad
P_1 (n) = \left( n^2 - 1 \right)^2 n^2
\quad \text{and} \quad
Q_1 (n) = \frac{1}{2} n^2 \left( 2n^2 - 4n + 1 \right).
\end{equation}
In other words, we have identities
\begin{gather}
\label{comb_id_oe1}
\sum_{0\leq \nu\leq n \text{ even}}{\nu\binom{2\mu}{\mu-\nu}} = \frac{\binom{2\mu}{\mu}}{2\mu-1}\frac{\mu^2-\mu}{2}, \quad \quad \quad
\sum_{0\leq \nu\leq n \text{ odd}}{\nu\binom{2\mu}{\mu-\nu}} = \frac{\binom{2\mu}{\mu}}{2\mu-1}\frac{\mu^2}{2} \\
\label{comb_id_e3}
\sum_{0\leq \nu\leq n \text{ even}}{\nu^3\binom{2\mu}{\mu-\nu}} = \frac{\binom{2\mu}{\mu}}{(2\mu-1)(2\mu-3)}(\mu^2-1)^2\mu^2 \\
\label{comb_id_3o}
\sum_{0\leq \nu\leq n \text{ odd}}{\nu^3\binom{2\mu}{\mu-\nu}} = \frac{\binom{2\mu}{\mu}}{(2\mu-1)(2\mu-3)}\frac{\mu^2(2\mu^2-4\mu+1)}{2}
\end{gather}

\subsection{Algebraic results and identities}

We also need some results for summing polynomials over integers satisfying constraints on their sum and parities. 
They can be proved as in \cite{DKM2017} using generalisations of Ehrhart's theorem as in \cite{BV1997}, but we give more elementary proofs in the appendix.

\begin{proposition}\label{lemma-odd}
For positive odd integers $k_1$, $k_2$
$$\sum_{\substack{i_1,i_2\geq 1, \ \ i_1+i_2=n \\ \{i_1,i_2\} \text{ {have fixed parities}}}}i_1^{k_1}i_2^{k_2}$$ 
is an odd polynomial of degree $(k_1+k_2+1)$ in $n$. Furthermore the leading coefficient is independent of the choice of parities.
\end{proposition}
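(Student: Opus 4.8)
The plan is to treat the sum $\sum i_1^{k_1} i_2^{k_2}$ over $i_1+i_2=n$ with fixed parities as a finite difference / generating-function problem, and to exploit the symmetry $i_1 \leftrightarrow i_2$ to force oddness. First I would reduce to the case of a single monomial, since the full sum is a product of powers. Writing $i_2 = n - i_1$ and expanding $(n-i_1)^{k_2}$ by the binomial theorem, the quantity becomes a rational-coefficient combination of power sums $\sum_{i_1} i_1^{m}$ taken over $i_1$ in a fixed residue class mod $2$, with $i_1$ ranging over an interval whose endpoints depend linearly on $n$. The standard fact I would invoke here is that $\sum_{i=1}^{N} i^m$ is a polynomial in $N$ of degree $m+1$ (a Faulhaber/Bernoulli polynomial); restricting to a fixed parity class only changes the result by a polynomial of the same type, because summing over even (resp. odd) $i$ up to $N$ is itself a polynomial in $N$. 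Collecting the contributions, each monomial $i_1^m$ with $m \le k_1+k_2$ produces a polynomial of degree at most $m+1 \le k_1+k_2+1$ in $n$, so the whole sum is a polynomial in $n$ of degree at most $k_1+k_2+1$.

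To pin down the degree exactly and to establish oddness, I would use a symmetry argument rather than explicit computation. Let $f(n)$ denote the sum in question. The key observation is that the summand $i_1^{k_1} i_2^{k_2}$, together with its partner under the swap $(i_1,i_2)\mapsto(i_2,i_1)$, is symmetric; since the constraint set $\{i_1+i_2=n,\ i_1,i_2\ge 1\}$ with the two fixed parities is preserved (or swapped) under this involution, pairing terms shows that $f$ has a definite symmetry that I can compare against a reflection $n \mapsto -n$. Concretely, I expect that after passing to the polynomial extension, $f(-n) = -f(n)$: this is most cleanly seen by relating the parity-restricted sum to an unrestricted sum over an interval and noting that reflecting the interval endpoints (which is what $n\mapsto -n$ does) reverses orientation, contributing one sign, while the degree parity of the monomial summands contributes the rest. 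The cleanest route to oddness is to verify it first for the unrestricted-parity sum $\sum_{i_1+i_2=n} i_1^{k_1} i_2^{k_2}$ over $i_1,i_2 \ge 1$, where the reflection symmetry is transparent, and then argue that the parity-restricted versions differ from a fixed fraction of this by lower-order odd corrections.

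For the leading coefficient I would compute it directly as a dominant-term estimate, which is where the independence from the choice of parities becomes clear. As $n \to \infty$ the sum is a Riemann-sum approximation to $\int_0^n x^{k_1}(n-x)^{k_2}\,\dd x$, but restricting $i_1$ to a single parity class simply halves the number of terms, dividing the leading term by $2$ regardless of which parity is chosen; hence the leading coefficient is the Beta-integral value $\tfrac{1}{2}\,B(k_1+1,k_2+1)\,n^{k_1+k_2+1} = \tfrac{1}{2}\cdot\frac{k_1!\,k_2!}{(k_1+k_2+1)!}\,n^{k_1+k_2+1}$, which depends only on $k_1,k_2$ and not on the parities. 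This shows both that the degree is exactly $k_1+k_2+1$ (the coefficient is nonzero) and that the leading coefficient is parity-independent, completing the three claims.

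I expect the main obstacle to be the oddness claim. Establishing the exact degree and the leading coefficient is routine via the integral approximation, but proving that \emph{every} even-degree coefficient vanishes requires care: the finite-difference corrections that distinguish a parity-restricted sum from its integral counterpart are governed by Bernoulli-type constants, and one must check that these corrections respect the $n\mapsto -n$ antisymmetry rather than spoiling it. I would handle this by setting up the functional equation $f(-n)=-f(n)$ at the level of the summation identity before extracting coefficients, so that oddness is inherited structurally rather than verified term by term; the appendix proof referenced in the text presumably formalizes exactly this reflection identity.
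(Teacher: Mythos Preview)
Your treatment of polynomiality and the leading coefficient is essentially the paper's approach: expand $(n-i_1)^{k_2}$ binomially, reduce to parity-restricted power sums $S_m^{\text{e/o}}(n)$, and invoke Faulhaber to get a polynomial of degree at most $k_1+k_2+1$. Your Riemann-sum/Beta-integral computation of the leading coefficient is a minor variant of the paper's explicit partial-fraction identity $\sum_j \frac{(-1)^j}{x+j}\binom{n}{j} = \frac{n!}{x(x+1)\cdots(x+n)}$, and gives the same value $\frac{k_1!\,k_2!}{2(k_1+k_2+1)!}$.

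The genuine gap is the oddness argument. The paper does \emph{not} use a reflection $n\mapsto -n$. Instead it writes down explicit Bernoulli-number formulas for $S_k^{\text{e}}(n)$ and $S_k^{\text{o}}(n)$, observes that each is an odd or even polynomial \emph{except} for two anomalous pieces --- a constant term $\pm C_k$ and a term in $n^k$ --- and then tracks these anomalies through the binomial expansion. The $n^k$ anomalies cancel because $\sum_j (-1)^j\binom{k_2}{j}=0$; the constants $C_k$ vanish for even $k$ via the identity $\frac{1}{e^x+1}+\frac{1}{e^{-x}+1}=1$, so they only contribute at odd $j$ and hence only odd powers of $n$. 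This bookkeeping is the actual content of the proof.

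Your proposed route does not work as written. The swap $(i_1,i_2)\mapsto(i_2,i_1)$ relates the sum with parity pattern $(\varepsilon_1,\varepsilon_2)$ and exponents $(k_1,k_2)$ to the one with $(\varepsilon_2,\varepsilon_1)$ and $(k_2,k_1)$; it tells you nothing about $f(-n)$ versus $f(n)$. And your fallback --- that the parity-restricted sum is half the unrestricted sum plus ``lower-order odd corrections'' --- is circular: the claim that the correction is odd is exactly what needs proving. A reflection argument \emph{can} be made rigorous, but it requires establishing and applying the summation reciprocity $S_k(-N)=(-1)^{k+1}S_k(N-1)$ together with its parity-restricted analogues (which shift the residue class), and then checking that the boundary terms behave correctly. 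None of that is in your proposal, so the oddness step is currently missing.
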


In other words, in the sum above, we fix elements $\varepsilon_1, \varepsilon_2 \in \Z/2\Z$ and the sum is over integers $i_1, i_2$ such that $i_1, i_2 \geq 1$, $i_1 + i_2 = n$ and $i_1 \equiv \varepsilon_1$ mod $2$, $i_2 \equiv \varepsilon_2$ mod $2$.

Proposition \ref{lemma-odd} can be directly generalized by induction to  the following.
\begin{proposition}\label{lemma-odd-induction}
For positive odd integers $k_1, k_2,\ldots ,k_m$
$$\sum_{\substack{i_1,i_2,\ldots,i_m\geq 1, \ \ i_1+i_2+\ldots +i_m=n \\ \{i_1,i_2,\ldots,i_m\} \text{ have fixed parities}}}i_1^{k_1}i_2^{k_2}\cdots i_m^{k_m}$$ 
is an odd polynomial of degree $(\sum_{i=1}^m k_i + m -1)$ in $n$. Furthermore the leading coefficient is independent of the choice of parities. \qed
\end{proposition}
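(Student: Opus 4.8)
The plan is to induct on the number of summation variables $m$, using the two-variable statement, Proposition \ref{lemma-odd}, as the engine driving each inductive step and the case $m=1$ as a trivial base. For $m=1$ the sum, with parity $\varepsilon_1$ prescribed, is simply $n^{k_1}$ on the residue class $n \equiv \varepsilon_1 \pmod 2$ on which it is supported (and empty otherwise), and $n^{k_1}$ is an odd polynomial of degree $k_1$ with leading coefficient $1$.

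Before the induction I would record one simplifying observation: since every $k_j \geq 1$, any term with some $i_j = 0$ contributes $0^{k_j} = 0$, so the constraints $i_j \geq 1$ may be relaxed to $i_j \geq 0$ without changing any of the sums. This aligns the range of summation with the range in the two-variable base case and makes the boundary bookkeeping clean. I would then carry a strengthened inductive hypothesis. Writing $S_m(n)$ for the sum (now with $i_j \geq 0$), the claim is that $S_m$ agrees, on the residue class $n \equiv \varepsilon_1 + \cdots + \varepsilon_m \pmod 2$ on which it is supported, with an odd polynomial $p_m$ of degree $\sum_{j=1}^m k_j + m - 1$ whose leading coefficient does not depend on the chosen parities, and which in addition vanishes at every $j \in \{0, 1, \ldots, m-1\}$ lying in that residue class. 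The extra vanishing is the key device: it ensures that $S_{m-1}$ and its interpolating polynomial $p_{m-1}$ coincide at every nonnegative integer of the relevant parity, since they agree by definition for arguments $\geq m-1$, while for arguments in $\{0, \ldots, m-2\}$ both vanish --- the sum because any way of writing a number smaller than $m-1$ as a sum of $m-1$ nonnegative integers must use a zero part.

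For the inductive step I would peel off the last variable,
\[
S_m(n) = \sum_{i_m \geq 0,\ i_m \equiv \varepsilon_m} i_m^{k_m}\, S_{m-1}(n - i_m),
\]
where the range is effectively $0 \leq i_m \leq n$. Because $i_m \equiv \varepsilon_m$ and $n$ lies in the supporting class of $S_m$, each argument $n - i_m$ automatically lies in the supporting class of $S_{m-1}$, so by the previous remark $S_{m-1}(n - i_m) = p_{m-1}(n - i_m)$ in every term. Expanding the odd polynomial as $p_{m-1}(j) = \sum_\ell c_\ell\, j^\ell$ (all exponents $\ell$ odd and positive) gives
\[
S_m(n) = \sum_\ell c_\ell \sum_{\substack{i_m + j = n \\ i_m, j \geq 0}} i_m^{k_m}\, j^\ell,
\]
and each inner sum is precisely a two-variable sum of the form controlled by Proposition \ref{lemma-odd}, with positive odd exponents $k_m$ and $\ell$ and fixed parities. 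Hence each is an odd polynomial in $n$ of degree $k_m + \ell + 1$ with parity-independent leading coefficient. Summing over $\ell$, $S_m$ is odd; its degree is attained at the top exponent $\ell = \deg p_{m-1} = \sum_{j<m} k_j + m - 2$, giving degree $\sum_{j=1}^m k_j + m - 1$; and its leading coefficient is the product of the (parity-independent) leading coefficients of $p_{m-1}$ and of the relevant two-variable polynomial, so it too is parity-independent. Finally, the new vanishing conditions come for free: the identity above is valid for all $n \geq 0$, while the genuine sum $S_m(n)$ is $0$ whenever $0 \leq n \leq m-1$, so $p_m$ must vanish at those points of the class, completing the induction.

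I expect the one real subtlety to be exactly this boundary bookkeeping. A naive peeling would instead meet the mismatch between the natural range of the inner sum and the range $i_1, i_2 \geq 1$ of Proposition \ref{lemma-odd}, producing a finite collection of correction terms of the shape $(n-j)^{k_m} j^\ell$ that are \emph{not} odd and would destroy the conclusion. The strengthened hypothesis --- the vanishing of $p_{m-1}$ at the small arguments $0, \ldots, m-2$ of the correct parity, together with the harmless passage from $i_j \geq 1$ to $i_j \geq 0$ --- is designed precisely to annihilate these terms. Once the propagation of this vanishing is checked, the degree and leading-coefficient claims are routine.
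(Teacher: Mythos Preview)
Your proof is correct and follows the same inductive approach the paper indicates; the paper gives no argument beyond the phrase ``directly generalized by induction'' and a \qed, whereas you carry out the induction in full and in particular handle the boundary bookkeeping (relaxing $i_j \geq 1$ to $i_j \geq 0$ and tracking the vanishing of $p_{m-1}$ at small arguments) needed to feed the inner sum cleanly into Proposition~\ref{lemma-odd}. This extra care is warranted and not merely cosmetic, since a naive peeling would otherwise leave exactly the non-odd correction terms you flag.
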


We will need the following particular cases, which can be proved by a straightforward induction, and follow immediately from the discussion in the appendix.
\begin{lemma}\label{lem-odd-even-power-sums}
Let $n \geq 0$ be an integer.
\begin{enumerate}
\item
When $n$ is odd,
$\displaystyle
%\[
\sum_{\substack{0 \leq i \leq n \\ i \text{ odd}}} i
= \frac{ (n+1)^2 }{4}
\quad \text{and} \quad
\sum_{\substack{0 \leq i \leq n \\ i \text{ odd}}} i^2
%= \frac{ (n+2)^3 - 1 - 3(n+2)^2 + 3 + 2(n+1) }{6}
= \frac{n(n+1)(n+2)}{6}
%\]
$.
\item
When $n$ is even, 
$\displaystyle
%\[
\sum_{\substack{0 \leq i \leq n \\ i \text{ even}}} i
= \frac{ n(n+2) }{4}
\quad \text{and} \quad
\sum_{\substack{0 \leq i \leq n \\ i \text{ even}}} i^2
= \frac{n(n+1)(n+2)}{6}
%\]
$.
\end{enumerate}
\end{lemma}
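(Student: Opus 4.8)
The plan is to reduce each of the four sums to the elementary power sums $\sum_{k=1}^{m} k = \tfrac{1}{2}m(m+1)$ and $\sum_{k=1}^{m} k^2 = \tfrac{1}{6}m(m+1)(2m+1)$ by a change of summation variable, and then to rewrite the answer in terms of $n$. This is cleaner than induction because it handles both identities in each parity case simultaneously and avoids verifying a base case and an inductive step four times over.

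For part (1), where $n$ is odd, I would write $n = 2m-1$, so that the odd integers $i$ with $0 \le i \le n$ are exactly $i = 2k-1$ for $k = 1, \ldots, m$, where $m = (n+1)/2$. Substituting into $\sum i$ gives $\sum_{k=1}^{m}(2k-1) = 2\cdot\tfrac{1}{2}m(m+1) - m = m^2$, which is $(n+1)^2/4$. Substituting into $\sum i^2$ gives $\sum_{k=1}^{m}(2k-1)^2 = 4\sum_{k=1}^{m} k^2 - 4\sum_{k=1}^{m} k + m$; collecting terms yields $\tfrac{1}{3}m(2m-1)(2m+1)$, and inserting $m = (n+1)/2$ (so that $2m-1 = n$ and $2m+1 = n+2$) produces $\tfrac{1}{6}n(n+1)(n+2)$.

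For part (2), where $n$ is even, I would write $n = 2m$, so that the even integers $i$ with $0 \le i \le n$ are $i = 2k$ for $k = 0, 1, \ldots, m$. Since both $i$ and $i^2$ vanish at $i = 0$, the $k = 0$ term may be dropped, and the sums become $\sum_{k=1}^{m} 2k = m(m+1)$ and $\sum_{k=1}^{m}(2k)^2 = 4\sum_{k=1}^{m} k^2 = \tfrac{2}{3}m(m+1)(2m+1)$. Substituting $m = n/2$ (so that $2m = n$, $m+1 = (n+2)/2$, and $2m+1 = n+1$) gives $\tfrac{1}{4}n(n+2)$ and $\tfrac{1}{6}n(n+1)(n+2)$ respectively, as claimed.

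I do not anticipate any genuine obstacle, since every step is an elementary manipulation of a finite sum; the only point that calls for any attention is the treatment of the boundary index $i = 0$ in the even case, which contributes nothing and so may be freely included or excluded. An equally valid alternative, indicated in the statement of the lemma, is a direct induction on $n$ in steps of $2$, using that passing from $n$ to $n+2$ adjoins exactly one new term $i = n+2$ of the correct parity to each sum; the reindexing argument above is preferable only in that it makes the closed forms appear transparently rather than having to be guessed in advance.
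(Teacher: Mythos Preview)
Your argument is correct; each of the four identities is verified by the substitution you describe, and the arithmetic checks out. The paper does not give a detailed proof of this lemma either: it simply remarks that the identities ``can be proved by a straightforward induction'' and, in the appendix, that they are the special cases $S^{\mathrm{o}}_1(n)$, $S^{\mathrm{o}}_2(n)$, $S^{\mathrm{e}}_1(n)$, $S^{\mathrm{e}}_2(n)$ of the general Bernoulli-number formulae \eqref{eqF2}--\eqref{eqF3}. Your reindexing to the ordinary power sums $\sum_{k=1}^m k$ and $\sum_{k=1}^m k^2$ is a slightly more hands-on route than quoting those general formulae, but it is in the same elementary spirit and, for these small exponents, arguably cleaner since it avoids Bernoulli numbers altogether.
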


\section{Basic results on polygon diagrams}

\subsection{Base case pruned enumerations}
We start by working out $Q_{g,n}$ for some small values of $(g,n)$. 
\begin{proposition}\label{basecases}
\begin{align*}
Q_{0,1}(\mu_1) &= \delta_{\mu_1,0} \\
Q_{0,2}(\mu_1,\mu_2) &= \overline{\mu}_1 \delta_{\mu_1,\mu_2} \\
Q_{0,3}(\mu_1,\mu_2,\mu_3)& = 
\begin{cases}
2\mu_1\mu_2\mu_3, & \mu_1,\mu_2,\mu_3 > 0 \\
\mu_1\mu_2, &  \mu_1,\mu_2 > 0, \mu_3 = 0 \\
\overline{\mu}_1, & \mu_1 \text{ even}, \mu_2=\mu_3=0 \\
0, & \mu_1 \text{ odd}, \mu_2 = \mu_3=0 
\end{cases} \\
\end{align*}
\end{proposition}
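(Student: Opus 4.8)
The plan is to exploit the defining feature of a pruned diagram---that every edge is non--boundary--parallel, i.e.\ \emph{essential}---together with the fact that on a surface of low complexity there are very few isotopy classes of essential arcs. Since the marked points are \emph{precisely} the vertices, every marked point has degree exactly two, so a pruned diagram is an embedded collection of polygons all of whose edges are essential. I would dispose of the disc first: any properly embedded arc in $S_{0,1}$ separates it into two discs and hence is boundary--parallel, so there are no essential arcs at all. Consequently a pruned polygon diagram on $S_{0,1}$ can contain no polygon, and the only such diagram is the empty one, which exists precisely when there are no vertices. This gives $Q_{0,1}(\mu_1)=\delta_{\mu_1,0}$.

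For the annulus I would first show that the only essential arcs are the spanning arcs joining $F_1$ to $F_2$. Indeed, an arc with both endpoints on one boundary component must separate $S_{0,2}$ (a non--separating such arc would leave a connected complement of Euler characteristic $1$, i.e.\ a disc having the whole of the other boundary circle as a boundary component, which is impossible), and the side not containing the other boundary is then a disc, so the arc is boundary--parallel. Hence every edge of a pruned diagram runs between $F_1$ and $F_2$, the vertices of each polygon alternate between the two boundary components, each polygon has equally many vertices on $F_1$ and on $F_2$, and summing forces $\mu_1=\mu_2$; this produces the factor $\delta_{\mu_1,\mu_2}$. To evaluate the count when $\mu_1=\mu_2=\mu$, I would cut along one of the two essential edges incident to the decorated point $\mathbf{m}_1$: this opens the annulus into a disc and turns the remaining diagram into a non--crossing polygon diagram on the disc, which is uniquely determined by the permutation it represents. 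Reducing to this finite disc data, the only surviving freedom is the discrete offset recording where the cut edge meets $F_2$ relative to $\mathbf{m}_2$, taken modulo the boundary Dehn twist about the core (which is admissible, as it fixes $\partial S$ pointwise); this leaves exactly $\overline{\mu}$ classes, with the convention $\overline{0}=1$ handling the empty annulus, giving $Q_{0,2}(\mu_1,\mu_2)=\overline{\mu}_1\,\delta_{\mu_1,\mu_2}$.

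The pair of pants $S_{0,3}$ is the substantive case and is where I expect the real work to lie. Here the essential arcs comprise the three spanning types $F_iF_j$ with $i\neq j$, together with, for each $i$, a self--arc of $F_i$ separating the other two boundary components. I would organise the count by which of $\mu_1,\mu_2,\mu_3$ vanish. When all are positive, the degree--two condition and the available essential arcs force the polygons to link all three boundaries; reducing to disc data by repeated cutting along essential edges, the residual freedom is one discrete offset per boundary, contributing $\mu_1\mu_2\mu_3$, while the two cyclic orders in which the three boundaries can be visited around a polygon (the chirality of the joining) supply the factor $2$, giving $2\mu_1\mu_2\mu_3$. When exactly one $\mu_i$ vanishes, say $\mu_3=0$, the polygons use only $F_1,F_2$ and the self--arcs enclosing the empty boundary $F_3$; the chirality symmetry is broken, leaving offsets on $F_1$ and $F_2$ only and the count $\mu_1\mu_2$. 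When two vanish, say $\mu_2=\mu_3=0$, every edge must be a self--arc of $F_1$ separating off $F_2$ or $F_3$, and tracking how such arcs must pair up around a polygon forces an even total number of vertices on $F_1$; this yields $0$ when $\mu_1$ is odd and $\overline{\mu}_1$ (again with $\overline{0}=1$) when $\mu_1$ is even.

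The main obstacle throughout is the bookkeeping of equivalences: I must show that, after extracting the combinatorial (disc) data, the only remaining freedom is the finite offset data---equivalently, that the boundary Dehn twists act transitively on the residual winding freedom---and I must correctly pin down the parity constraint and the chirality factor of $2$ on the pair of pants. I would establish the transitivity by iteratively cutting along essential edges to reduce each surface to a disc, where the representing diagram is unique, and I would verify the residual finite counts by hand for the smallest values of the $\mu_i$ before confirming the stated formulae in general.
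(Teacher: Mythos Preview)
Your arguments for $Q_{0,1}$ and $Q_{0,2}$ are essentially those of the paper, though the paper is more direct for the annulus: since any two essential arcs on $S_{0,2}$ are parallel, every polygon in a pruned diagram is a \emph{bigon}, and the diagram is then determined by which vertex on $F_2$ lies in the bigon containing $\mathbf m_1$. Your cut-to-a-disc description reaches the same conclusion but leaves unexplained why the disc diagram is forced; the missing observation is precisely that parallel arcs can only bound bigons.

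The substantive gap is in $Q_{0,3}$. Your claim that the factor $2$ (when all $\mu_i>0$) arises from ``the two cyclic orders in which the three boundaries can be visited around a polygon'' is not correct: many pruned diagrams on the pair of pants contain \emph{no} polygon meeting all three boundary components. The paper's proof classifies the possible polygons --- non-separating bigons, separating bigons, two kinds of triangles, and a quadrilateral --- and then shows by a seven-way case analysis (governed by the sign and parity of $\mu_1-\mu_2-\mu_3$, taking $\mu_1$ maximal) that for each parameter triple \emph{exactly two} configurations of polygons occur. For example, when $\mu_1=\mu_2+\mu_3$ the two configurations are ``one quadrilateral plus bigons'' and ``only non-separating bigons''; these are not related by any chirality flip. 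The factor $2$ is therefore the outcome of this classification, not an a priori symmetry, and your outline does not supply an alternative mechanism that would produce it. Similarly, in the one-vanishing case $\mu_3=0$ the paper distinguishes whether $\mu_1-\mu_2$ is even (only bigons) or odd (one triangle plus bigons), a dichotomy your ``broken chirality'' remark does not account for. To complete the proof along your lines you would still need to carry out this polygon-type classification and verify that it always yields exactly two configurations.
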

Here $\delta$ is the Kronecker delta and $\overline{n} = n + \delta_{n,0}$ is as in \cite{DKM2017}: for a positive integer $\overline{n} = n$, and $\overline{0} = 1$.

\begin{proof}
On the disc, every edge is boundary parallel. Therefore $Q_{0,1}(\mu_1) = 0$ for all positive $\mu_1$.

For $(g,n)=(0,2)$, all non-boundary parallel edges must run between the two boundary components $B_1$ and $B_2$, and are all 
parallel to each other. A pruned polygon diagram must consist of a number of pairwise parallel bigons running between $F_1$ and $F_2$. Therefore $Q_{0,2}(\mu_1,\mu_2) = 0$ if $\mu_1\neq \mu_2$. If $\mu_1=\mu_2 > 0$, consider 
the bigon containing the decorated marked point on $F_1$. The location of its other vertex on $B_2$ uniquely determines the pruned polygon diagram. Therefore $Q_{0,2}(\mu_1,\mu_1) = \mu_1$, or 
$Q_{0,2}(\mu_1,\mu_1) = \overline{\mu}_1$ if we include the trivial case $Q_{0,2}(0,0)=1$.

For $(g,n)=(0,3)$, we can embed the pair of pants in the plane, with its usual orientation, and denote the three boundary components by $F_1 = F_{\text{outer}}$, $F_2 = F_{\text{left}}$ and $F_3 = B_{\text{right}}$, with $\mu_1$, $\mu_2$ and $\mu_3$ marked points respectively. Without loss of generality assume $\mu_1\geq \mu_2, \mu_3$.
A non-boundary parallel edge can be separating, with endpoints on the same boundary component and cutting the surface into two annuli, or non-separating, with endpoints on different boundary components. See figure \ref{P03-1}.

\begin{figure}
\begin{center}
\includegraphics[scale=0.7]{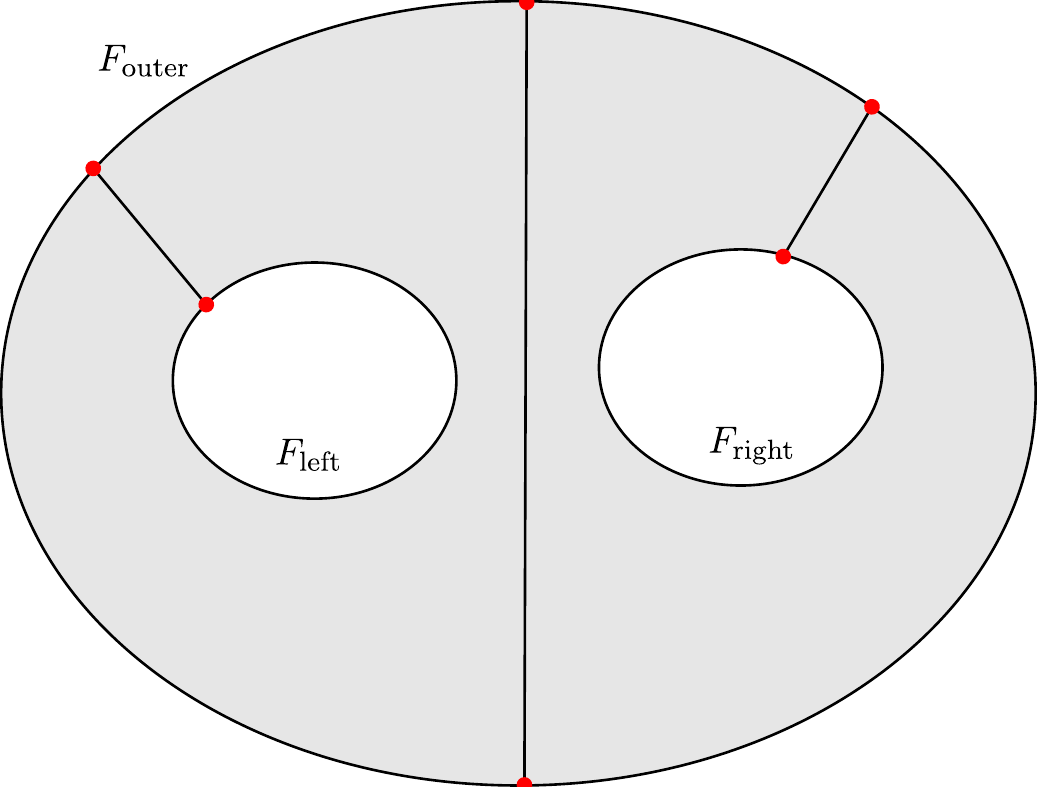}
\caption{Boundary labels and possible non-boundary parallel edges on a pair of pants.}
\label{P03-1}
\end{center}
\end{figure}

On a pair of pants there can be only one type of separating edge, and all separating edges must be parallel to each other. 
Consider a polygon $P$ in a pruned diagram. All its diagonals are also non-boundary parallel, for a boundary-parallel diagonal implies boundary-parallel edges. Further, $P$ cannot have more than one vertex on more than one boundary component; if there were two boundary components $F_i, F_j$ each with at least two vertices then there would be separating diagonals from each of $F_i, F_j$ to itself, impossible since there can be only one type of separating edge.
Moreover, $P$ cannot have three vertices on a single boundary component, since the three diagonals connecting them would have to be non-boundary parallel, hence separating, hence parallel to each other, hence forming a bigon at most.
Therefore a polygon in a pruned diagram on a pair of pants is of one of the following types:
\begin{itemize}
\item a non-separating bigon from one boundary component to another,
\item a separating bigon from one boundary component to itself,
\item a triangle with a vertex on each boundary component,
\item a triangle with two vertices on a single boundary component, and the third vertex on a different boundary component,
\item a quadrilateral with two opposite vertices on a single boundary component, and one vertex on each of the other two boundary components.
\end{itemize} 
See figure \ref{pants-polygons}.
It's easy to see that there can be at most one quadrilateral or two triangles in any pruned diagram. 

\begin{figure}
\begin{center}
\[
\begin{array}{cc}
\includegraphics[scale=0.65]{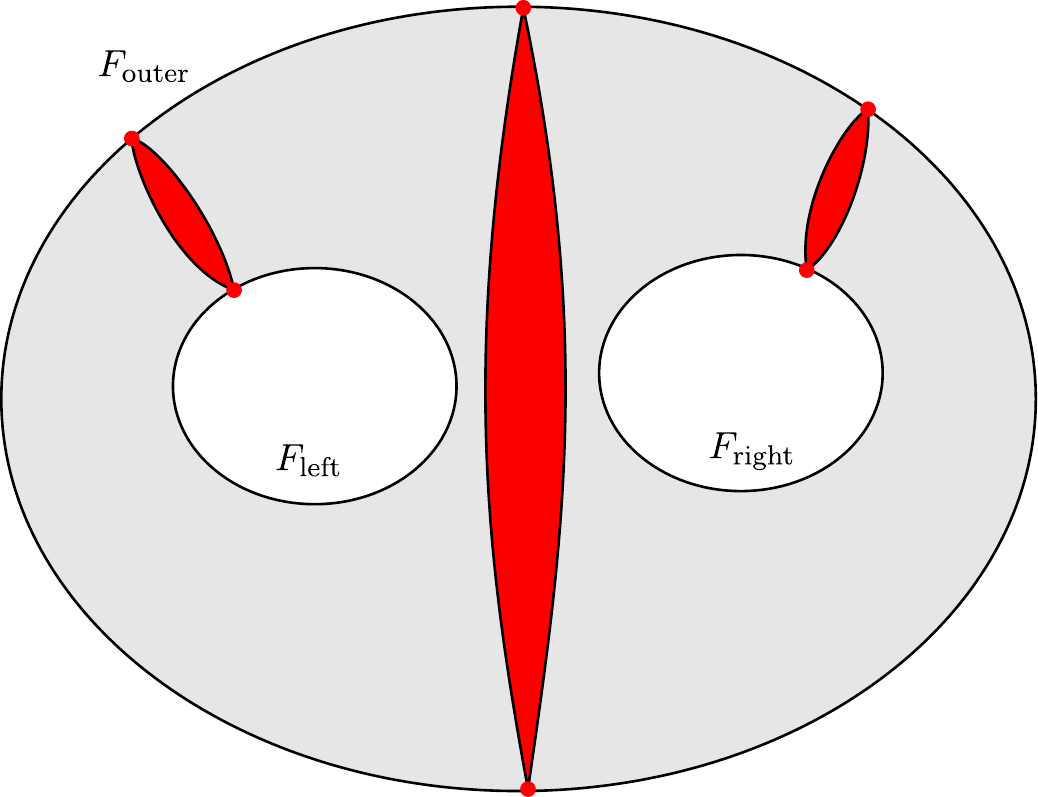} &
\includegraphics[scale=0.65]{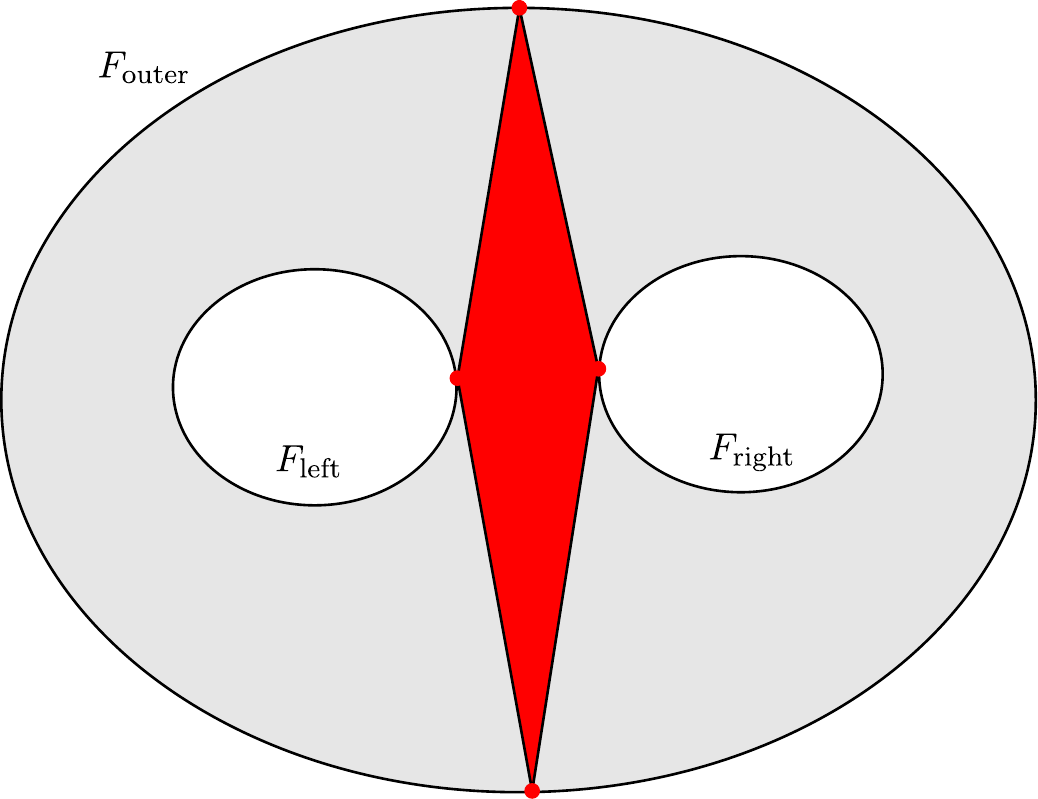} \\
\includegraphics[scale=0.65]{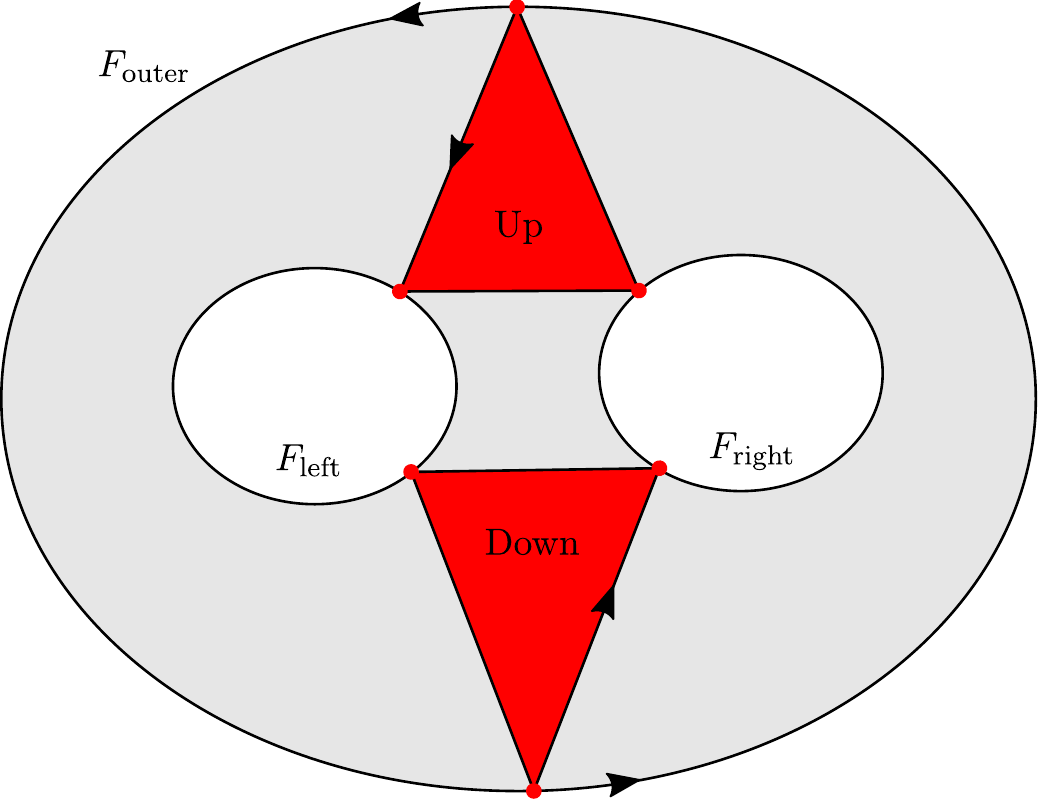} &
\includegraphics[scale=0.65]{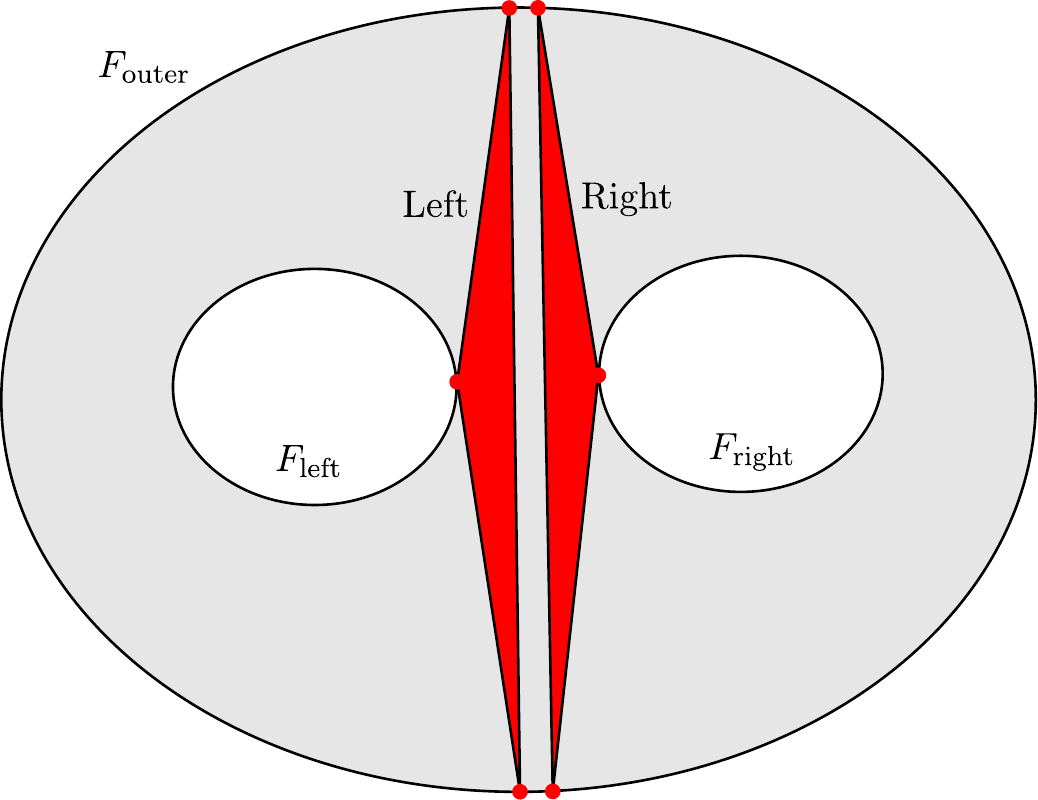}
\end{array}
\]
\caption{The decomposition of a polygon diagram.}
\label{pants-polygons}
\end{center}
\end{figure}

If $\mu_2=\mu_3=0$, then all edges must be between $B_{\text{outer}}$ and itself and separating. A pruned polygon diagram must consist of a number of pairwise
parallel bigons. Hence $Q_{0,3}(\mu_1,0,0)=0$ if $\mu_1$ is odd. If $\mu_1>0$ is even, then the configuration of $\frac{\mu_1}{2}$ separating bigons gives rise to
$\mu_1$ pruned polygon diagrams, as the decorated marked point can be located at any one of the $\mu_1$ positions. If $\mu_1 = 0$ then there is only the empty diagram, so in general there are $\overline{\mu}_1$ diagrams.

If $\mu_2 > 0$ and $\mu_3 = 0$, then since $\mu_1\geq \mu_2$, the possible polygons are 
\begin{itemize}
\item a non-separating bigon between $F_{\text{outer}}$ and itself,
\item a separating bigon between $F_{\text{outer}}$ and $F_{\text{left}}$,
\item a triangle with two vertices on $F_{\text{outer}}$ and a vertex on $F_{\text{left}}$.
\end{itemize}
Furthermore there can be at most one triangle. If $\mu_1-\mu_2$ is even, then a pruned polygon diagram must consist of $\mu_2$ bigons
from $F_{\text{outer}}$ to $F_{\text{left}}$ and $\frac{\mu_1-\mu_2}{2}$ bigons from $F_{\text{outer}}$ to itself. If $\mu_1-\mu_2$ is odd,
then a pruned polygon diagram must consist of a single triangle, $\mu_2-1$ bigons
from $F_{\text{outer}}$ to $F_{\text{left}}$ and $\frac{\mu_1-\mu_2-1}{2}$ bigon from $F_{\text{outer}}$ to itself. Again each such
configuration determines $\mu_1\mu_2$ pruned diagrams accounting for the locations of the two decorated marked points on $F_{\text{outer}}$ 
and $F_{\text{left}}$.

If $\mu_1, \mu_2,\mu_3 > 0$, then because $\mu_1$ is maximal, any separating edge or separating diagonal in a quadrilateral
must be from $F_{\text{outer}}$ to itself. Therefore the single quadrilateral (if it exists) must have a pair of opposite vertices on $F_{\text{outer}}$ and one vertex each on $F_{\text{left}}$ and $F_{\text{right}}$. There are two types of triangles with a separating edge from $F_{\text{outer}}$ to itself, depending
on whether the last vertex is on $F_{\text{left}}$ or $F_{\text{right}}$. Call these \emph{left} or \emph{right} triangles respectively. 
There are also two types of triangles with a vertex on each boundary component, depending on whether the triangle's boundary, inheriting an orientation from the surface, goes from $F_{\text{outer}}$ to $F_{\text{left}}$ or $F_{\text{right}}$. Call these \emph{up} or \emph{down} triangles respectively.  We then have the following cases.

\begin{enumerate}[label=(\roman*)]
\item
There is one quadrilateral. Then the pruned diagram must consist of this single quadrilateral,
$\mu_2-1$ bigons between $F_{\text{outer}}$ and $F_{\text{left}}$, and 
$\mu_3-1$ bigons between $F_{\text{outer}}$ and $F_{\text{right}}$. In this case we have $\mu_1 - \mu_2 - \mu_3 = 0$.
\item
There is a left and a right triangle. Then the pruned diagram must consist of these two triangles, $\mu_2-1$ bigons between $F_{\text{outer}}$ and $F_{\text{left}}$, $\mu_3-1$ bigons between $F_{\text{outer}}$ and $F_{\text{right}}$, and $\frac{\mu_1-\mu_2-\mu_3-2}{2}$ separating bigons between $F_{\text{outer}}$ and itself. In this case we have $\mu_1 - \mu_2 - \mu_3$ is positive and even.
\item 
There is an up and a down triangle. Then the pruned diagram must consist of these two triangles, $\frac{\mu_1+\mu_2-\mu_3-2}{2}$ 
bigons between $F_{\text{outer}}$ and $F_{\text{left}}$, $\frac{\mu_1+\mu_3-\mu_2-2}{2}$ bigons between $F_{\text{outer}}$ and $F_{\text{right}}$, and $\frac{\mu_2+\mu_3-\mu_1-2}{2}$ bigons between $F_{\text{left}}$ and $F_{\text{right}}$. In this case we have $\mu_1 - \mu_2 -\mu_3$ is negative and even. 
(Note that $\mu_1+\mu_2 - \mu_3$ and $\mu_1+\mu_3 - \mu_2$ are both positive and even in this case.)
\item 
There is a single left (resp. right) triangle. Then the pruned diagram must consist of this triangle, $\mu_2-1$ (resp. $\mu_3-1$) 
bigons between $F_{\text{outer}}$ and $F_{\text{left}}$ (resp. $F_{\text{right}}$), $\mu_3$ (resp. $\mu_2$) bigons between $F_{\text{outer}}$ and $F_{\text{right}}$ (resp. $F_{\text{left}}$), and $\frac{\mu_1-\mu_2-\mu_3-1}{2}$ separating bigons between $F_{\text{outer}}$ and itself. In this case $\mu_1 - \mu_2-\mu_3$ is positive and odd.
\item
There is a single up (resp. down) triangle. Then the pruned diagram must consist of this triangle, $\frac{\mu_1+\mu_2-\mu_3-1}{2}$ 
bigons between $F_{\text{outer}}$ and $F_{\text{left}}$, $\frac{\mu_1+\mu_3-\mu_2-1}{2}$ bigons between $F_{\text{outer}}$ and $F_{\text{right}}$, and $\frac{\mu_2+\mu_3-\mu_1-1}{2}$ bigons between $F_{\text{left}}$ and $F_{\text{right}}$. In this case $\mu_1 - \mu_2-\mu_3$ is negative and odd.
(Note that $\mu_1+\mu_2 - \mu_3$ and $\mu_1+\mu_3 - \mu_2$ are both positive and odd in this case.)
\item 
There are only non-separating bigons. Then the pruned diagram must consist of $\frac{\mu_1+\mu_2-\mu_3}{2}$ 
bigons between $F_{\text{outer}}$ and $F_{\text{left}}$, $\frac{\mu_1+\mu_3-\mu_2}{2}$ bigons between $F_{\text{outer}}$ and $F_{\text{right}}$, and $\frac{\mu_2+\mu_3-\mu_1}{2}$ bigons between $F_{\text{left}}$ and $F_{\text{right}}$. In this case $\mu_1 - \mu_2-\mu_3$ is negative or zero, and even. 
(Note that $\mu_1+\mu_2 - \mu_3$ and $\mu_1+\mu_3 - \mu_2$ are both positive and even in this case.)
\item
There are only bigons, some of which are separating. Then the pruned diagram must consist of $\mu_2$ bigons between $F_{\text{outer}}$ and $F_{\text{left}}$, $\mu_3$ bigons between $F_{\text{outer}}$ and $F_{\text{right}}$, and $\frac{\mu_1-\mu_2-\mu_3}{2}$ separating bigons between $F_{\text{outer}}$ and itself. In this case we have $\mu_1 - \mu_2 -\mu_3$ is positive and even.
\end{enumerate}
Observe that for each triple $(\mu_1,\mu_2,\mu_3)$, precisely two of these cases apply, depending on $\mu_1-\mu_2-\mu_3$. (Here we count the left and right versions of (iv) separately, and the up and down versions of (v) separately.) We thus have two possible configurations of polygons, and each configuration corresponds to $\mu_1\mu_2\mu_3$ pruned diagrams, accounting for the locations of the decorated marked points on the three boundary components. Thus $Q_{0,3}$ is as claimed.
\end{proof}

\subsection{Cuff diagrams}

Consider the annulus embedded in the plane with $F_1$ being the outer and $F_2$ the inner boundary. A {\em cuff diagram} is a polygon diagram on an annulus with no edges between vertices on 
the inner boundary $F_2$. (These correspond to the local arc diagrams of \cite{DKM2017}.) Let $L(b,a)$ be the number, up to equivalence, of cuff diagrams with $b$ vertices on the
outer boundary $F_1$ and $a$ vertices on the inner boundary $F_2$. 

\begin{proposition}\label{cuffcount}
\[
L(b,a) = 
\begin{cases}
a\binom{2b}{b-a}, &  a,b > 0\\
\frac{1}{2}\binom{2b}{b}, &a = 0, b >0\\
1, &  a=b=0 \\
\end{cases}
\]
\end{proposition}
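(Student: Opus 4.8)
The plan is to strip each cuff diagram down to its set of edges, reduce the topology by cutting the annulus open into a disc, and then carry out a ballot-type count, treating the empty inner boundary as a special case.

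\textbf{Reduction to arc systems.} First I would note that every marked point lies on exactly one polygon and hence has exactly two incident edges, while the cuff condition forbids edges with both ends on $F_2$. Counting edge-ends, the $2a$ ends on the inner boundary force exactly $2a$ \emph{spokes} running from $F_1$ to $F_2$, and the remaining $2b-2a$ ends on $F_1$ pair into $b-a$ arcs with both ends on $F_1$. Thus a cuff diagram is the same data as a non-crossing family of properly embedded arcs in the annulus with $b-a$ outer arcs and $2a$ spokes, in which every marked point has degree two and every resulting cycle bounds an embedded disc. In particular this forces $b\geq a$, matching the vanishing of $\binom{2b}{b-a}$ when $a>b$, so the proposition reduces to enumerating these arc systems.

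\textbf{Unrolling and the binomial factor.} For $a\geq 1$ I would cut the annulus along the canonical arc joining the inner decorated point $\mathbf{m}_2$ to the outer decorated point $\mathbf{m}_1$; since this arc joins the two boundary circles, the result is a disc. Reading the $2b$ edge-ends around $F_1$ from $\mathbf{m}_1$, I would form a word in $\pm 1$, recording $+1$ the first time an outer arc's end is met and $-1$ the second, and $+1$ for each of the $2a$ spoke ends. The disc/non-crossing condition then reconstructs the outer arcs from the word, and because the arcs are free to wind around the hole the word is \emph{unconstrained} rather than Dyck; hence the number of admissible outer patterns is the number of words of length $2b$ with exactly $b-a$ minus-signs, namely $\binom{2b}{b-a}$.

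\textbf{The factor $a$ and the base case $a=0$.} The only remaining freedom is how the bundle of $2a$ spokes twists around the hole before the inner boundary is glued back; I expect this twisting number to range over exactly $a$ values (it is forced to $0$ when $a=1$, recovering $1\cdot\binom{2b}{b-1}$) and to be independent of the outer pattern, which would give $L(b,a)=a\binom{2b}{b-a}$. When $a=0$ there are no spokes, the word is balanced, and the winding freedom lets the associated walk dip below zero; here the absence of a decorated inner point leaves an up/down reflection symmetry, so the walks are identified in pairs and one obtains $\tfrac12\binom{2b}{b}=\binom{2b-1}{b-1}$ for $b>0$, while the empty diagram gives $L(0,0)=1$.

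\textbf{Main obstacle.} The delicate point throughout is the winding and twisting bookkeeping in the annulus. The real work will be to show that the cut-and-unroll construction is a genuine bijection, independent of isotopy and surjective onto the admissible words, that contractibility of every polygon corresponds exactly to the reconstructibility of the arcs from the word, and that the twisting of the spoke bundle contributes a clean factor of exactly $a$ for $a\geq 1$ (respectively that the reflection is exactly two-to-one when $a=0$), with no configurations lost or double-counted when the disc is reassembled into the annulus. Once this is pinned down, the binomial evaluations themselves are routine.
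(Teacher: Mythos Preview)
Your reduction to $b-a$ outer arcs and $2a$ spokes is correct and is where the paper starts too. But the two mechanisms you rely on to reach $a\binom{2b}{b-a}$ both break down as written.

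First, your word. Reading anticlockwise from $\mathbf m_1$, the very first edge-end cannot be a ``second encounter'', so your word can never begin with $-1$; more generally the outer-arc subword is forced to be Dyck, and you do not get all $\binom{2b}{b-a}$ sign patterns. The sentence ``the arcs are free to wind around the hole'' is the wrong justification: an arc with both ends on $F_1$ is automatically boundary-parallel on the annulus and carries no winding. What can wind is the spoke bundle, but your word does not see that.

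Second, the factor $a$. You have already spent the inner decorated point $\mathbf m_2$ to choose the cut; once it is fixed there is no residual rotational freedom of $F_2$ to produce an extra factor afterwards. Your check at $a=1$ is consistent with any mechanism and so does not test this. In fact in the paper the factor $a$ has nothing to do with twisting: it is exactly the choice of which of the $a$ inner vertices carries the decoration, data that the outer-boundary encoding simply does not record.

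The paper's argument, following Przytycki, sidesteps both problems by never cutting. One takes a concentric circle $F_1'$ just inside $F_1$, which the edges meet in $2b$ points. Each outer arc is oriented anticlockwise --- an \emph{intrinsic} orientation on the annulus, independent of any basepoint --- and its starting end is labelled ``out''. This gives a partial arrow diagram: a choice of $b-a$ labelled points among $2b$, hence $\binom{2b}{b-a}$ possibilities, with no Dyck constraint because the labelling does not come from a linear reading order. The inverse is explicit: repeatedly match an ``out'' vertex to its anticlockwise unlabelled neighbour, join the remaining $2a$ unlabelled points to $F_2$, two-colour the resulting regions, and pinch so that the black regions become the polygons. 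This reconstructs the underlying diagram uniquely; the factor $a$ then comes for free from the choice of inner decorated point. For $a=0$ the two-colouring has a parity obstruction --- the unique annular region must be white --- and a one-step rotation of the labels gives an explicit bijection between compatible and incompatible arrow diagrams, yielding $\tfrac12\binom{2b}{b}$. Your proposed reflection pairing in this case would need a comparably explicit fixed-point-free involution to stand on its own.
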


\begin{proof}
This argument follows \cite{DKM2017}, using ideas of Przytycki \cite{Przytycki1999}. A {\em partial arrow diagram} on a circle is a labeling of a subset of 
vertices on the boundary of the circle with the label ``out". 

Assume $a > 0$. We claim there is a bijection between the set of equivalence classes
cuff diagrams counted by $L(b,a)$, on the one hand, and on the other, the set of partial arrow diagrams on a circle with $2b$ vertices and $b-a$ ``out" labels, together with a choice of
decorated marked point on the inner circle. Clearly the latter set has cardinality $a\binom{2b}{b-a}$.

This bijection is constructed as follows.
Starting from a cuff diagram $D$, observe that there are $b-a$ edges of $D$ with both endpoints on the outer boundary $F_1$. Orient these edges in an anticlockwise direction. (Note this orientation may disagree with the orientation induced from polygon boundaries.)  Label the $b$ vertices on $F_1$ from $1$ to $b$ starting from the decorated marked point. Taking a slightly smaller outer circle $F'_1$ close to $F_1$, the edges of $D$ intersect $F'_1$ in $2b$ vertices, 
say $1,1',2,2',\ldots, b,b'$. Label each of these $2b$ vertices ``out" if it is a starting point of one of the oriented edges. 
We then have $b-a$ ``out" labels, and hence a partial arrow diagram of the required type. 
The decorated marked point on the inner circle is given by the cuff diagram.

Conversely, starting from a partial arrow diagram, there is a unique way to reconstruct the edges of the cuff diagram $D$ so that 
they do not intersect. Regard the circle with $2b$ vertices of the partial arrow diagram as the outer boundary $F_1$, with the $2b$ vertices lying in pairs close to each marked point of the original annulus, and with the pair close to marked point $i$ labelled $i,i'$. 
Since there are both labelled and unlabelled vertices among the $2b$ vertices, there is an ``out" vertex on $F_1$ followed by an unlabelled vertex in a anticlockwise direction. The edge starting from this ``out" vertex must end at that neighbouring unlabelled vertex (otherwise
edges ending at those two vertices would intersect). Next we remove those two matched vertices and repeat the argument. Eventually 
all $b-a$ ``out" vertices are matched with unlabelled vertices by $b-a$ oriented edges. The remaining $2a$ unlabelled vertices 
are joined to $2a$ vertices on the inner circle $F_2$. These $2a$ edges divide the annulus into $2a$ sectors, which are further subdivided
into a number of disc regions by the oriented edges. Since $2a$ is even, the disc regions can be alternately coloured black and white. Each pair of vertices on $F_1$ is then pinched into the original marked point; the colouring can be chosen so that the pinched vertices are corners of black polygons near $F_1$. The vertices of $F_2$ can then be pinched in pairs in a unique way to produce a polygon diagram $D$, where the polygons are the black regions. This $D$ has $b$ vertices on $F_1$ and $a$ vertices on $F_2$. Finally, each vertex on $F_2$ belongs to a separate polygon with all other vertices on the outer circle. Placing the decorated marked point on $F_2$ at each vertex gives a distinct cuff diagram of the required type. See figure \ref{arrowdiagram2}.

\begin{figure}
\begin{center}
\includegraphics[scale=0.7]{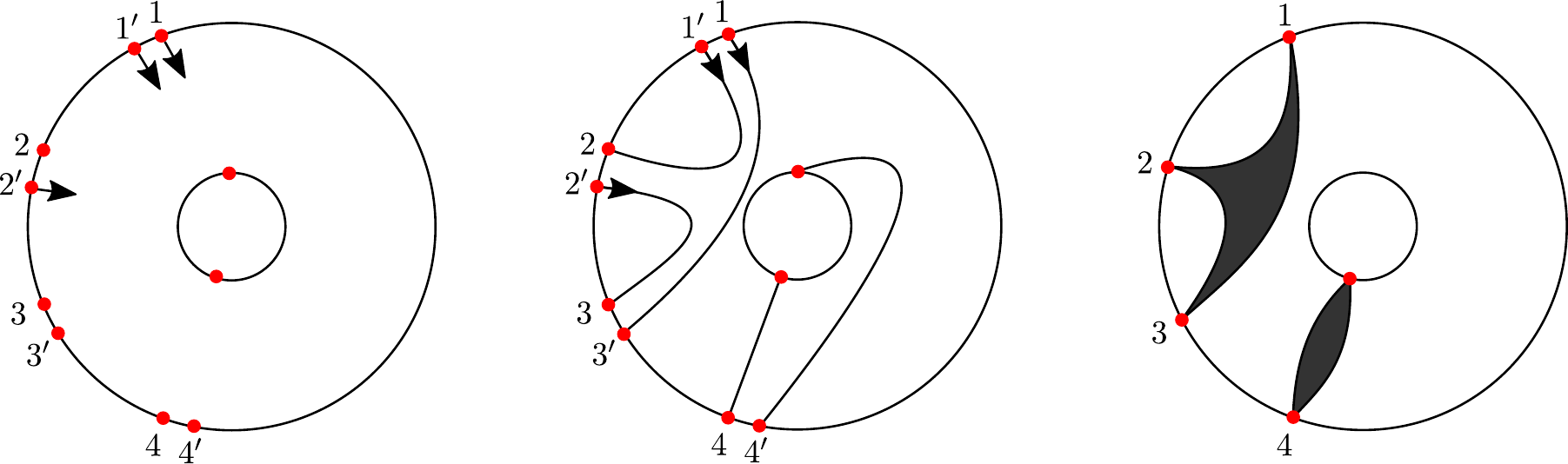}
\caption{Reconstructing a cuff diagram from a partial arrow diagram.}
\label{arrowdiagram2}
\end{center}
\end{figure}

If $a=0$ then the bijection fails. From the cuff diagram we can still construct a partial arrow diagram. But when the cuff diagram is being reconstructed from a partial arrow diagram, there is a single
non-disc region, so not every partial arrow diagram gives rise to a cuff diagram. 
Call a partial arrow diagram {\em compatible} if it yields a cuff diagram. Since each edge is now 
separating, the regions divided by the edges can still be alternately coloured black and white. 
All regions are discs except one which is an annulus. Again choose the colouring so that the pairs of vertices labelled $i,i'$ on $F_1$ are pinched into corners of 
black regions. The partial arrow diagram is then compatible if and only if the annulus region is white.
However, when the partial arrow diagram is not compatible, pinching instead the corners 
of white regions will then result in a cuff diagram. In other words, if we rotate all the ``out" labels by one spot 
counterclockwise, the new partial arrow diagram will be compatible. 
Conversely, if a partial arrow diagram is compatible, then rotating its labels one spot clockwise will result in an incompatible
partial arrow diagram. Hence there is a bijection between compatible and incompatible partial arrow diagrams, and the number of
cuff diagram is exactly half of the number of partial arrow diagrams, or $\frac{1}{2}\binom{2b}{b}$.

When $a=b=0$, there is the unique empty cuff diagram.
\end{proof}

\subsection{Annulus enumeration}

\begin{proposition}
\label{P02prop}
\begin{align*}
P_{0,2}(\mu_1,\mu_2)&=
\begin{cases} \binom{2\mu_1-1}{\mu_1}\binom{2\mu_2-1}{\mu_2}\left(\frac{2\mu_1\mu_2}{\mu_1+\mu_2}+1\right), &  \mu_1,\mu_2 > 0 \\ 
\binom{2\mu_1-1}{\mu_1}, &  \mu_2=0 
\end{cases} 
\end{align*}
\end{proposition}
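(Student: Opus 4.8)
The plan is to classify every polygon diagram on the annulus by connectivity and count the two types separately. Call a diagram \emph{connected} if there is an edge between the two boundary components $F_1$ and $F_2$ (equivalently, some polygon has vertices on both), and \emph{disconnected} otherwise. These cases are mutually exclusive and exhaustive, so $P_{0,2}(\mu_1,\mu_2)$ is the sum of the two counts, and the two summands should match the $\frac{2\mu_1\mu_2}{\mu_1+\mu_2}$ term and the ``$+1$'' term respectively.

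For the disconnected diagrams I would set up a bijection with pairs of cuff diagrams. Since every polygon is an embedded disc, no polygon can enclose the opposite boundary component; hence in a disconnected diagram the polygons meeting $F_1$ are disjoint from those meeting $F_2$, and there is an essential core curve $\gamma$ separating them, unique up to isotopy in the complement of the diagram. Cutting along $\gamma$ produces two annuli: one bounded by $F_1$ and $\gamma$ carrying all $\mu_1$ vertices on $F_1$ and none on $\gamma$, and one bounded by $\gamma$ and $F_2$ carrying all $\mu_2$ vertices on $F_2$. Each is a cuff diagram with no inner vertices, counted by $L(\mu_1,0)$ and $L(\mu_2,0)$. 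Conversely, gluing any such pair along $\gamma$ recovers a disconnected diagram; since $\gamma$ carries no marked points, the gluing is unique up to equivalence (a Dehn twist along $\gamma$ is realised by a homeomorphism fixing every marked point and polygon), and the two decorated points on $F_1$ and $F_2$ are tracked by the two cuff diagrams. By Proposition~\ref{cuffcount}, $L(\mu_i,0)=\binom{2\mu_i-1}{\mu_i}$ (using $\binom{-1}{0}=1$ for $\mu_i=0$), so the disconnected count is $\binom{2\mu_1-1}{\mu_1}\binom{2\mu_2-1}{\mu_2}$.

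For the connected diagrams I would invoke uniqueness of representation: as recalled in the introduction, an edge from $F_1$ to $F_2$ cuts the annulus into a disc, so a connected annular non-crossing partition is represented by a unique polygon diagram. Thus connected diagrams biject with connected annular non-crossing partitions, whose number is given by Mingo--Nica \cite[cor.~6.8]{MN2004} as $\binom{2\mu_1-1}{\mu_1}\binom{2\mu_2-1}{\mu_2}\frac{2\mu_1\mu_2}{\mu_1+\mu_2}$. Adding the two contributions gives the claimed formula when $\mu_1,\mu_2>0$. The case $\mu_2=0$ then falls out of the same analysis: there can be no edge to $F_2$, so there are no connected diagrams, and the disconnected count degenerates to $L(\mu_1,0)\,L(0,0)=\binom{2\mu_1-1}{\mu_1}$.

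The step I expect to be the main obstacle is the topological bookkeeping in the disconnected case: proving that the separating curve $\gamma$ exists and is unique up to isotopy in the complement of the diagram, and that the cut-and-paste operation is a genuine bijection respecting both the equivalence relation and the placement of the decorated marked points. The connectivity dichotomy is immediate and the connected count is quoted rather than reproved, so essentially all the care lies in making this cut-and-paste argument precise.
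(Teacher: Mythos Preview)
Your approach is correct and essentially identical to the paper's: both split into connected and disconnected diagrams, quote Mingo--Nica for the connected count, and identify the disconnected diagrams with pairs of cuff diagrams counted by $L(\mu_1,0)\,L(\mu_2,0)$. The paper is in fact terser than you on the disconnected case---it simply asserts that such a diagram ``is a union of two cuff diagrams'' and multiplies $\tfrac{1}{2}\binom{2\mu_1}{\mu_1}\cdot\tfrac{1}{2}\binom{2\mu_2}{\mu_2}$---so the separating-curve bookkeeping you flag as the main obstacle is exactly the detail the paper leaves implicit.
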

\begin{proof}
If $\mu_2 = 0$ then a polygon diagram is just a cuff diagram, hence by proposition \ref{cuffcount}
\[
P_{0,2}(\mu_1,0)=L(\mu_1,0)=\frac{1}{2}\binom{2\mu_1}{\mu_1}=\binom{2\mu_1-1}{\mu_1}.
\]
Note that taking $\binom{-1}{0}=1$, this works even when $\mu_1=0$.

If $\mu_1, \mu_2 > 0$, then as we saw in the introduction, from \cite{MN2004} the number of connected polygon diagrams (i.e. with at least one edge from $F_1$ to $F_2$) is 
\[
\binom{2\mu_1-1}{\mu_1}\binom{2\mu_2-1}{\mu_2}\frac{2\mu_1\mu_2}{\mu_1+\mu_2}.
\]
If there are no edges between
the two boundaries, then the polygon diagram is a union of two cuff diagrams, hence
\begin{align*}
P_{0,2}(\mu_1,\mu_2) &=\binom{2\mu_1-1}{\mu_1}\binom{2\mu_2-1}{\mu_2}\frac{2\mu_1\mu_2}{\mu_1+\mu_2} + \frac{1}{2}\binom{2\mu_1}{\mu_1}\cdot\frac{1}{2}\binom{2\mu_2}{\mu_2} \\
&=\binom{2\mu_1-1}{\mu_1}\binom{2\mu_2-1}{\mu_2}\left(\frac{2\mu_1\mu_2}{\mu_1+\mu_2}+1\right)
\end{align*}
as required.
\end{proof}

\subsection{Decomposition of polygon diagrams}

Suppose $S$ is not a disc or an annulus. Then any polygon diagram on $S$ can be decomposed into a pruned polygon 
diagram on $S$ together with $n$ cuff diagrams, one for each boundary component of $S$. Take an annular collar 
of each boundary component of $S$, and isotope all boundary parallel edges to be inside the union of these annuli.
The inner circle of each annulus intersects the polygons in $\nu_i\geq 0$ arcs. 
Pinch each arc into a vertex, choose one vertex on each inner circle with $\nu_i > 0$ as a 
decorated marked point, and cut along each inner circle. This produces a cuff
diagram on each annular collar and a pruned polygon diagram on the shrunken surface. This decomposition
is essentially unique except for the choice of decorated marked points on the inner circles, i.e.,
a single polygon diagram will give rise to $\prod_{i=1}^n \overline{\nu}_i$ distinct decompositions. See figure \ref{pruned}.
Conversely, starting from such a decomposition, we can reconstruct the unique polygon diagram by
attaching the cuff diagrams to the pruned polygon diagram by identifying the corresponding decorated
marked points along the gluing circles, and unpinching all the vertices on the gluing circles into arcs. Therefore we have
the relationship between $P_{g,n}$ and $Q_{g,n}$, corresponding to the ``local decomposition" of arc diagrams in \cite{DKM2017}.

\begin{figure}
\begin{center}
\includegraphics[scale=0.65]{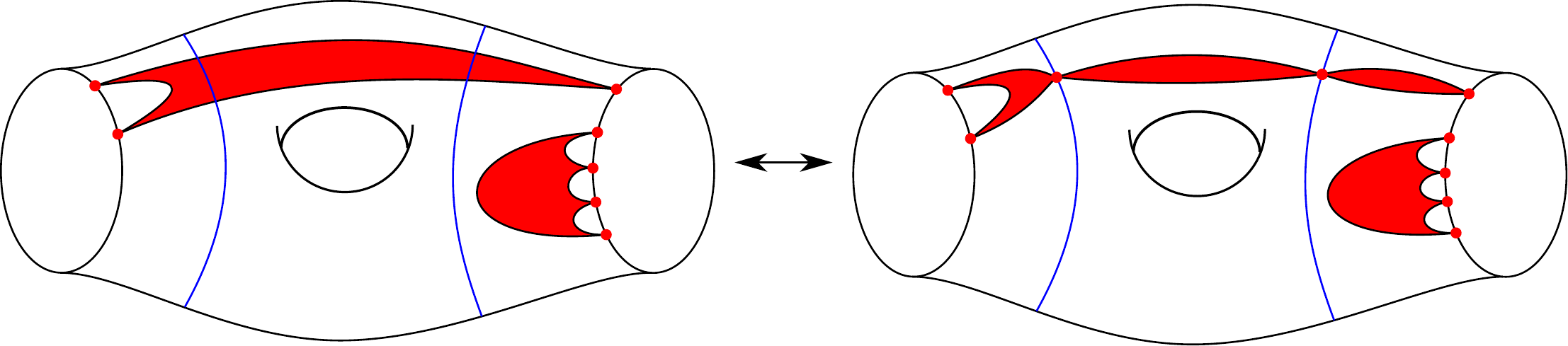}
\caption{The decomposition of a polygon diagram.}
\label{pruned}
\end{center}
\end{figure}

\begin{proposition}\label{PQ}
For $(g,n) \neq (0,1)$ or $(0,2)$,
\begin{equation}
\label{PQeq}
P_{g,n}(\mu_1, \ldots, \mu_n) = \sum_{0 \leq \nu_i \leq \mu_i} \left(Q_{g,n} (\nu_1, \ldots, \nu_n)\prod_{i=1}^n \frac{1}{\overline{\nu}_i}L(\mu_i, \nu_i)\right)
\end{equation}
\qed
\end{proposition}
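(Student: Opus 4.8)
The plan is to prove the identity by refining the count of polygon diagrams according to how they meet the annular collars, and then reading the decomposition and reconstruction described above as a weighted bijection. For a polygon diagram $D$ on $(S,M)$ with $\mu_i$ vertices on $F_i$, let $\nu_i$ denote the number of arcs in which the polygons of $D$ cross the inner circle of the $i$-th collar, once all boundary-parallel edges have been isotoped into the collars. First I would check that each $\nu_i$ is a well-defined invariant of the equivalence class of $D$: the boundary-parallel edges isotope into the collar in an essentially unique way, and the isotopy class of the resulting system of arcs crossing the inner circle, hence its cardinality $\nu_i$, is independent of the choices made. This partitions the diagrams counted by $P_{g,n}(\mu_1,\ldots,\mu_n)$ according to the vector $(\nu_1,\ldots,\nu_n)$, where $0 \leq \nu_i \leq \mu_i$ since $L(\mu_i,\nu_i) = 0$ whenever $\nu_i > \mu_i$ by Proposition \ref{cuffcount}.

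For a fixed vector $(\nu_1, \ldots, \nu_n)$, I would identify the set of decompositions of such diagrams with the set of pairs consisting of a pruned polygon diagram and an $n$-tuple of cuff diagrams. Cutting along the inner circles sends $D$, together with a choice of decorated marked point on each inner circle carrying $\nu_i > 0$ vertices, to a pruned polygon diagram on $S_{g,n}$ with $\nu_i$ vertices on $F_i$ and a cuff diagram on each collar with $\mu_i$ outer and $\nu_i$ inner vertices. One must verify that the shrunken diagram is genuinely pruned, i.e. that no boundary-parallel edge survives the cutting; this holds because every such edge was isotoped into a collar and hence removed. Conversely, gluing each cuff diagram to $F_i$ of the pruned diagram by matching decorated marked points and unpinching recovers $D$ uniquely, since matching the decorated points pins down the gluing homeomorphism up to isotopy. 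Counting the pairs with the prescribed $\nu_i$ independently gives $Q_{g,n}(\nu_1, \ldots, \nu_n)\prod_{i=1}^n L(\mu_i, \nu_i)$.

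Finally I would account for the multiplicity. The reconstruction map from pairs to diagrams is surjective onto the diagrams with the given $(\nu_1,\ldots,\nu_n)$ and is exactly $\prod_{i=1}^n \overline{\nu}_i$-to-one: a single $D$ arises from precisely the $\prod_{i=1}^n \overline{\nu}_i$ choices of decorated marked point on the inner circles, and distinct choices yield inequivalent pairs. Dividing, the number of diagrams with fixed $(\nu_1, \ldots, \nu_n)$ equals $Q_{g,n}(\nu_1, \ldots, \nu_n)\prod_{i=1}^n \frac{1}{\overline{\nu}_i}L(\mu_i, \nu_i)$, and summing over all admissible $(\nu_1, \ldots, \nu_n)$ yields the claimed formula.

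The main obstacle I anticipate is the multiplicity computation, specifically showing that the $\prod_{i=1}^n \overline{\nu}_i$ decorated-point choices give genuinely inequivalent pairs, so that there is no hidden over-counting coming from symmetries of $D$. The key point is that the outer decorated point $\mathbf{m}_i$ anchors each cuff diagram: an equivalence of cuff diagrams must fix $\mathbf{m}_i$ and preserve orientation, hence fixes every outer vertex, and therefore cannot carry one inner decorated point to another, so the choices are recorded faithfully. A secondary technical point is the well-definedness of $\nu_i$ and of the decomposition up to the decorated-point choice, which is exactly where the hypothesis $(g,n) \neq (0,1),(0,2)$ enters, ensuring the shrunken surface is again $S_{g,n}$ rather than a degenerate disc or annulus.
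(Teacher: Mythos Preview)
Your proposal is correct and follows essentially the same approach as the paper: the paper's proof is the paragraph immediately preceding the proposition, which describes the collar decomposition, the pinch-and-cut construction, the $\prod_i \overline{\nu}_i$ multiplicity from the choice of inner decorated points, and the inverse gluing---exactly the weighted bijection you outline. Your write-up is more careful about well-definedness of $\nu_i$ and the injectivity of the decorated-point choices than the paper's sketch, but the argument is the same.
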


It turns out that dividing by a power of 2 for each of the $\mu_i$ that is zero, we obtain a nicer form of this result, eliminating the piecewise nature of $L(\mu_i, \nu_i)$. The number of $\mu_i$ that are zero is given by $\sum_{i=1}^n \delta_{\mu_i,0}$. Defining
\[
{P}'_{g,n}(\mu_1, \ldots, \mu_n)=\frac{1}{2^{\sum^n_1 \delta_{\mu_i,0}}}P_{g,n}(\mu_1, \ldots, \mu_n) 
\quad \text{and} \quad
{Q}'_{g,n}(\nu_1, \ldots, \nu_n)=\frac{1}{2^{\sum^n_1 \delta_{\nu_i,0}}}Q_{g,n}(\nu_1, \ldots, \nu_n),
\]
and applying proposition \ref{cuffcount}, equation \eqref{PQeq} becomes
\begin{align}\label{PQ'}
P'_{g,n}(\mu_1, \ldots, \mu_n) = \sum_{0 \leq \nu_i \leq \mu_i} \left(Q_{g,n}'(\nu_1, \ldots, \nu_n)\prod_{i=1}^n \binom{2\mu_i}{\mu_i-\nu_i}\right).
\end{align}

\subsection{Pants enumeration}

\begin{proposition}
\label{P03count}
\[P_{0,3}(\mu_1,\mu_2,\mu_3)= \binom{2\mu_1-1}{\mu_1}\binom{2\mu_2-1}{\mu_2}\binom{2\mu_3-1}{\mu_3}\left(2\mu_1\mu_2\mu_3+\sum_{i\neq j}\mu_i\mu_j+\sum^3_{i=1} \frac{\mu_i^2-\mu_i}{2\mu_i-1}+1\right) \]
\end{proposition}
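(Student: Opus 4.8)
The plan is to evaluate $P_{0,3}$ directly from the decomposition formula \eqref{PQ'}, using the base-case values of $Q_{0,3}$ from Proposition \ref{basecases} together with the combinatorial identities \eqref{comb_id_oe1}. Working with the primed functions is the crucial choice here: it absorbs the piecewise behaviour of the cuff count $L$ and, as will become clear, makes the cases where some $\mu_i=0$ fall out uniformly rather than requiring separate treatment. Since $(0,3)\neq(0,1),(0,2)$, equation \eqref{PQ'} applies and reads $P'_{0,3}(\mu_1,\mu_2,\mu_3)=\sum_{0\leq\nu_i\leq\mu_i}Q'_{0,3}(\nu_1,\nu_2,\nu_3)\prod_{i=1}^3\binom{2\mu_i}{\mu_i-\nu_i}$, where $Q'_{0,3}=2^{-\sum_i\delta_{\nu_i,0}}Q_{0,3}$.

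First I would record $Q'_{0,3}$ from Proposition \ref{basecases}: it equals $2\nu_1\nu_2\nu_3$ when all $\nu_i>0$; $\tfrac12\nu_i\nu_j$ when exactly $\nu_k=0$ (with $\{i,j,k\}=\{1,2,3\}$); $\tfrac14\nu_i$ when $\nu_i$ is a positive even integer and the other two vanish (and $0$ if that surviving $\nu_i$ is odd); and $\tfrac18$ when all $\nu_i=0$. Accordingly I would split the triple sum into four groups indexed by how many of the $\nu_i$ vanish. In each group the summand factorises across the three indices, so each group's contribution is a product of one-variable sums. The only non-trivial one-variable sums that appear are $\sum_{\nu\geq 1}\nu\binom{2\mu}{\mu-\nu}$ (both parities) in the all-positive and one-zero groups, and $\sum_{\nu>0\text{ even}}\nu\binom{2\mu}{\mu-\nu}$ in the two-zero group; every remaining factor is a single term $\binom{2\mu_i}{\mu_i}$ coming from $\nu_i=0$.

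Both sums are supplied by \eqref{comb_id_oe1}: adding its two halves gives $\sum_{\nu\geq 1}\nu\binom{2\mu}{\mu-\nu}=\tfrac{\mu}{2}\binom{2\mu}{\mu}$, while the even half gives $\sum_{\nu>0\text{ even}}\nu\binom{2\mu}{\mu-\nu}=\frac{\binom{2\mu}{\mu}}{2\mu-1}\cdot\frac{\mu^2-\mu}{2}$. Writing $B_i=\binom{2\mu_i}{\mu_i}$, the four groups then contribute, respectively, $\tfrac14\mu_1\mu_2\mu_3\,B_1B_2B_3$, then $\tfrac18(\mu_1\mu_2+\mu_2\mu_3+\mu_3\mu_1)B_1B_2B_3$, then $\tfrac18\big(\sum_i\frac{\mu_i^2-\mu_i}{2\mu_i-1}\big)B_1B_2B_3$, and finally $\tfrac18 B_1B_2B_3$. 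Summing, $P'_{0,3}=\tfrac18 B_1B_2B_3\big(2\mu_1\mu_2\mu_3+\sum_{i\neq j}\mu_i\mu_j+\sum_i\frac{\mu_i^2-\mu_i}{2\mu_i-1}+1\big)$, where $\sum_{i\neq j}\mu_i\mu_j$ denotes the sum over the three unordered pairs.

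Finally I would convert back via $P_{0,3}=2^{\sum_i\delta_{\mu_i,0}}P'_{0,3}$. The key bookkeeping identity, valid under the convention $\binom{-1}{0}=1$, is $\binom{2\mu_i}{\mu_i}=2^{1-\delta_{\mu_i,0}}\binom{2\mu_i-1}{\mu_i}$, so that $B_1B_2B_3=2^{3-\sum_i\delta_{\mu_i,0}}\prod_i\binom{2\mu_i-1}{\mu_i}$. The powers of two cancel exactly, since $2^{\sum_i\delta_{\mu_i,0}}\cdot\tfrac18\cdot 2^{3-\sum_i\delta_{\mu_i,0}}=1$, yielding the claimed formula for all $\mu_1,\mu_2,\mu_3\geq 0$ simultaneously. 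I expect the only genuine subtlety to lie in this last step, namely keeping the $\mu_i=0$ corrections straight when passing between $\binom{2\mu}{\mu}$ and $\binom{2\mu-1}{\mu}$, where the naive relation $\binom{2\mu}{\mu}=2\binom{2\mu-1}{\mu}$ fails. Routing the entire argument through the primed functions is precisely what collects those corrections into a single clean power of two, so that no separate case analysis on the vanishing of the $\mu_i$ is ever required.
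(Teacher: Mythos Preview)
Your proof is correct and follows essentially the same approach as the paper: both evaluate $P'_{0,3}$ via \eqref{PQ'}, split the sum according to how many $\nu_i$ vanish, apply the identities \eqref{comb_id_oe1} (equivalently Proposition~\ref{almostpoly}) to each factor, and then convert back to $P_{0,3}$ using the relation between $\binom{2\mu}{\mu}$ and $\binom{2\mu-1}{\mu}$. Your bookkeeping identity $\binom{2\mu_i}{\mu_i}=2^{1-\delta_{\mu_i,0}}\binom{2\mu_i-1}{\mu_i}$ is stated more carefully than in the paper, and your explicit remark that routing through the primed functions makes the $\mu_i=0$ cases fall out uniformly is a helpful clarification of why this strategy works.
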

\begin{proof}
It is easier to work with $P'$ and $Q'$. We split the sum from \eqref{PQ'}
\[
P'_{0,3}(\mu_1, \mu_2, \mu_3) = \sum_{0\leq \nu_i \leq \mu_i} \left(Q_{0,3}'(\nu_1,\nu_2,\nu_3)\prod_{i=1}^3\binom{2\mu_i}{\mu_i-\nu_i}\right)
\]
into separate sums depending on how many of the $\nu_i$ are positive.
%=& \sum_{\substack{0 \leq \nu_i \leq \mu_i \\ \text{all } \mu_i \text{ positive}}} \left(Q'(\nu_1,\nu_2,\nu_3)\prod_{i=1}^3\binom{2\mu_i}{\mu_i-\nu_i}\right)+
%\sum_{\substack{0 \leq \nu_i \leq \mu_i \\ \text{two } \mu_i \text{ positive}}} \left(Q'(\nu_1,\nu_2,\nu_3)\prod_{i=1}^3\binom{2\mu_i}{\mu_i-\nu_i}\right)\\
%&+\sum_{\substack{0 \leq \nu_i \leq \mu_i \\ \text{one } \mu_i \text{ positive}}} \left(Q'(\nu_1,\nu_2,\nu_3)\prod_{i=1}^3\binom{2\mu_i}{\mu_i-\nu_i}\right)+\sum_{\substack{0 \leq \nu_i \leq \mu_i \\ \text{all } \mu_i \text{ zero}}} \left(Q'(\nu_1,\nu_2,\nu_3)\prod_{i=1}^3\binom{2\mu_i}{\mu_i-\nu_i}\right)
%\end{align*}
Using proposition \ref{basecases}, the sum over $\nu_i$ all being positive is given by
\begin{align*}
\sum_{\substack{0 \leq \nu_i \leq \mu_i \\ \text{all } \nu_i \text{ positive}}} Q_{0,3}'(\nu_1,\nu_2,\nu_3)\prod_{i=1}^3\binom{2\mu_i}{\mu_i-\nu_i}
=& \sum_{\substack{0 \leq \nu_i \leq \mu_i \\ \text{all } \mu_i \text{ positive}}}  2\nu_1\nu_2\nu_3\prod_{i=1}^3\binom{2\mu_i}{\mu_i-\nu_i}
= 2\prod_{i=1}^3 \sum_{1}^{\mu_i}\nu_i\binom{2\mu_i}{\mu_i-\nu_i}.
\end{align*}
Proposition \ref{almostpoly} then gives this expression as 
\[
2 \prod_{i=1}^3 \frac{\binom{2\mu_i}{\mu_i}}{2\mu_i-1} \left( P_0 (\mu_i) + Q_0 (\mu_i) \right)
= 2\prod_{i=1}^3\frac{\binom{2\mu_i}{\mu_i}}{2\mu_i-1}\frac{2\mu_i^2-\mu_i}{2} 
= \frac{\binom{2\mu_1}{\mu_1}}{2}\cdot\frac{\binom{2\mu_2}{\mu_2}}{2}\cdot\frac{\binom{2\mu_3}{\mu_3}}{2}\cdot(2\mu_1\mu_2\mu_3).
\]
Similarly, when $\nu_1 = 0$ and $\nu_2, \nu_3$ are positive we obtain
\begin{align*}
\sum_{\substack{0 \leq \nu_i \leq \mu_i \\ \nu_1 = 0, \nu_2,\nu_3>0 }} \left(Q'_{0,3} (\nu_1,\nu_2,\nu_3)\prod_{i=1}^3\binom{2\mu_i}{\mu_i-\nu_i}\right) =& 
\binom{2\mu_1}{\mu_1}\cdot \left(\sum_{\substack{0 \leq \nu_i \leq \mu_i \\ \nu_2,\nu_3 > 0 }} \left(\frac{1}{2}\nu_2\nu_3\prod_{i=2}^3\binom{2\mu_i}{\mu_i-\nu_i}\right)\right) \\
=&\frac{\binom{2\mu_1}{\mu_1}}{2}\cdot\frac{\binom{2\mu_2}{\mu_2}}{2}\cdot\frac{\binom{2\mu_3}{\mu_3}}{2}\cdot\left(\mu_2\mu_3\right).
\end{align*}
The sum over two $\nu_i$ being positive is given by repeating the above calculation with for $\nu_2 = 0$ and $\nu_3 = 0$. 
Continuing, when $\nu_1 = \nu_2 = 0$ and $\nu_3 > 0$ we obtain
\begin{align*}
\sum_{\substack{0 \leq \nu_i \leq \mu_i \\ \nu_1 = \nu_2= 0, \nu_3>0 }} \left(Q_{0,3}'(\nu_1,\nu_2,\nu_3)\prod_{i=1}^3\binom{2\mu_i}{\mu_i-\nu_i}\right) =& 
\binom{2\mu_1}{\mu_1}\cdot \binom{2\mu_2}{\mu_2}\cdot\left(\sum_{0<\nu_3\leq \mu_3, \nu_3 \text{ even} } \frac{1}{4}\nu_3\binom{2\mu_3}{\mu_3-\nu_3}\right) \\
=&\frac{\binom{2\mu_1}{\mu_1}}{2}\cdot\frac{\binom{2\mu_2}{\mu_2}}{2}\cdot\frac{\binom{2\mu_3}{\mu_3}}{2}\cdot\left(\frac{\mu_3^2-\mu_3}{2\mu_3-1}\right).
\end{align*}
The sum over one $\nu_i$ being positive is given by repeating the above calculation interchanging the roles of $\nu_1, \nu_2, \nu_3$. Finally when all $\nu_i$ are zero we have
\begin{align*}
\sum_{\substack{0 \leq \nu_i \leq \mu_i \\ \nu_1 = \nu_2 = \nu_3 = 0 }} 
\left(Q_{0,3}'(\nu_1,\nu_2,\nu_3)\prod_{i=1}^3\binom{2\mu_i}{\mu_i-\nu_i}\right) =& 
%\binom{2\mu_1}{\mu_1}\cdot \binom{2\mu_2}{\mu_2}\cdot \binom{2\mu_2}{\mu_3}\cdot \left(\frac{1}{8}\right)\\
%=&
\frac{\binom{2\mu_1}{\mu_1}}{2}\cdot\frac{\binom{2\mu_2}{\mu_2}}{2}\cdot\frac{\binom{2\mu_3}{\mu_3}}{2}
\end{align*}
Note that with our convention of $\binom{-1}{0}=1$, $\binom{2\mu_i}{\mu_i} = 2^{\delta_{\mu_i,0}}\binom{2\mu_i-1}{\mu_i}$.
Summing the above terms, $P_{0,3} = 2^{\sum_{i=1}^n \delta_{\mu_i,0}} P'_{0,3}$ is given as claimed.
%\begin{align*}
%%P_{0,3}(\mu_1, \mu_2, \mu_3) &= 
%2^{\sum^{n}_{1} \delta_{\mu_i,0}} P'_{0,3}(\mu_1, \mu_2, \mu_3)
%&= \binom{2\mu_1-1}{\mu_1}\binom{2\mu_2-1}{\mu_2}\binom{2\mu_3-1}{\mu_3}\left(2\mu_1\mu_2\mu_3+\sum_{i\neq j}\mu_i\mu_j+\sum^3_{i=1} \frac{\mu_i^2-\mu_i}{2\mu_i-1}+1\right).
%\end{align*}
\end{proof}

\section{Recursions}
In this section we will prove recursion relations for both the polygon diagram counts $P_{g,n}$ and the
pruned polygon diagrams counts $Q_{g,n}$.
The recursion for $P_{g,n}$ is similar to that obeyed by the arc diagram counts $G_{g,n}$ in \cite{DKM2017}. The recursion for $Q_{g,n}$, appears messy at first sight, but if we only consider the dominant part, it 
actually differs very little from the recursion of non-boundary-parallel arc diagram count $N_{g,n}$ in \cite{DKM2017}. 
The top degree component of $N_{g,n}$ in turn agrees with the lattice count polynomials of Norbury, the volume polynomial of
Kontsevich, and the Weil-Petersson volume polynomials of Mirzakhani. 

We orient each boundary component $F_i$ as the boundary of $S$. This induces a cyclic order on the $\mu_i$ vertices on
$F_i$, and we denote by $\sigma(v)$ the next vertex to $v$ along $F_i$. If $\mu_i \geq 2$ then $\sigma(v)\neq v$. For any polygon diagram $D$, orient the edges of $D$ by choosing the orientation on each polygon to agree with the orientation on $S$.

\subsection{Polygon counts}

We now prove theorem \ref{thm:P_recursion}, the recursion on $P_{g,n}$, which states that for $g \geq 0$ and $\mu_1 > 0$, equation \eqref{precur-eq} holds:
\begin{align*}
P_{g,n} (\mu_1, \ldots, \mu_n)
&= P_{g,n} (\mu_1 - 1, \mmu_{X\setminus \{1\}}) + \sum_{k=2}^n \mu_k P_{g,n-1} ( \mu_1 + \mu_k - 1, \mmu_{X\setminus \{1,k\}} ) \\
& \quad + \mathop{\sum_{i+j=\mu_1 - 1}}_{j>0} \bigg[ P_{g-1,n+1} (i,j, \mmu_{X\setminus \{1\}}) + \mathop{\sum_{g_1 + g_2 = g}}_{I \sqcup J = X\setminus \{1\}} P_{g_1, |I|+1} (i, \mmu_I) \, P_{g_2, |J|+1} (j, \mmu_j) \bigg].
\end{align*}

\begin{proof}[Proof of theorem \ref{thm:P_recursion}]
Consider the decorated marked point $\mathbf{m}_1$ on the boundary component $F_1$. Suppose it is a vertex of the polygon $K$ of the diagram $D$. Let $\gamma$ be the outgoing edge from $\mathbf{m}_1$. If the other endpoint of $\gamma$ is also $\mathbf{m}_1$, then $K$ is a $1$-gon, and we obtain a new polygon diagram $D'$ by removing $K$ entirely (including $\mathbf{m}_1$), and then if $\mu_1\geq 2$, selecting the new decorated marked point on $F_1$ to be $\sigma(\mathbf{m}_1)$ (if $\mu_1=1$ then there will be no vertices on $F_1$ in $D'$, so we do not need a decorated marked point).
Conversely, starting with a polygon diagram $D'$ on $S_{g,n}$ with $(\mu_1-1,\mu_2,\ldots,\mu_n)$ boundary vertices, we can insert a $1$-gon on $F_1$ just before the decorated marked point $\mathbf{m}_1'$ (if there are no vertices on $F_1$, simply insert a $1$-gon), and then move the decorated marked point to the vertex of the new $1$-gon. These two operations are inverses of each other. This bijection gives the term $P_{g,n} (\mu_1 - 1, \mmu_{X\setminus \{1\}})$ in \eqref{precur-eq}.

If the other endpoint $v$ of $\gamma$ is different from $\mathbf{m}_1$, there are several cases.

\begin{description}
\item[(A) $\gamma$ has both endpoints on $F_1$ and is non-separating.]~

We cut $S = S_{g,n}$ along $\gamma$ into $S' = S'_{g-1,n+1}$, by removing a regular strip $\gamma \times (0,\epsilon)$ from $S$, where $\gamma = \gamma \times \{0\}$ and $\{\mathbf{m}_1\}\times [0,\epsilon]\subset F_1$ is a small sub-interval of $[\mathbf{m}_1,\sigma(\mathbf{m}_1))$. Then $F_1$ splits into two arcs, which together with $\gamma$ and a parallel copy $\gamma \times \{\epsilon\}$, form two boundary components $F'_0$ and
$F'_1$ on $S'$, with $\gamma$ part of $F'_1$. If $\sigma(\mathbf{m}_1) = v$ on $F_1$, then $F'_0$ contains no vertices. We obtain a polygon diagram $D'$ on $S'$ by collapsing $\gamma$ into a single vertex $\mathbf{m}'_1$ which is the decorated marked point on $F'_1$, and setting $\sigma(\mathbf{m}_1)$ as the decorated marked point 
on $F'_0$ (if there is at least one vertex on $F'_0$). The new diagram $D'$ has $i\geq 0$ vertices on $F'_0$ and $j\geq 1$ vertices on $F'_1$ with $i+j=\mu_1-1$.
Conversely starting with such a polygon diagram $D'$ on $S_{g-1,n+1}$ with $(i,j,\mu_2,\ldots,\mu_n)$ boundary vertices, we can reconstruct $D$. First expand the
decorated marked point $\mathbf{m}'_1$ on $F'_1$ into an interval. Then glue a strip joining this interval on $F'_1$ to an interval just before the decorated marked point on $F'_0$. (If $i=0$, we can glue to any interval on on $F'_0$.)  This bijection gives the term 
$\sum_{\substack{i+j=\mu_1 - 1, \  j>0}} P_{g-1,n+1} (i,j,\mmu_{X\setminus \{1\}})$ in \eqref{precur-eq}.

\item[(B) $\gamma$ has both endpoints on $F_1$ and is separating.]~

This is almost the same as the previous case. As before, we cut $S_{g,n}$ along $\gamma$ into two surfaces $S'_1$ and $S'_2$ 
with polygon diagrams $D'_1$ and $D'_2$, such that the
new vertex $\mathbf{m}'_1$ obtained from collapsing $\gamma$ is on $S'_2$. The polygon diagram $D$ can be uniquely reconstructed from such a pair
$(D'_1,D'_2)$. This bijection gives the term 
$\sum_{\substack{i+j=\mu_1 - 1, \  j>0}} \sum_{g_1 + g_2 = g, \ I \sqcup J = X\setminus \{1\}} P_{g_1, |I|+1} (i, \mmu_I) \, P_{g_2, |J|+1} (j, \mmu_j)$ 
of \eqref{precur-eq}.

\item[(C) $\gamma$ has endpoints $\mathbf{m}_1$ on $F_1$ and $v$ on $F_k$, $k > 1$.]~

In this case $\gamma$ is necessarily non-separating. Cutting $S_{g,n}$ along $\gamma$ and collapsing $\gamma$ following a similar procedure results in a polygon diagram
$D'$ on a surface $S'_{g,n-1}$ with $\mu_1+\mu_k-1$ vertices on its new boundary component $F'_1$, and the collapsed vertex $\mathbf{m}'_1$ as the 
decorated marked point on $F'_1$. However this is not a bijection since the information about original location of the decorated 
marked point on $F_k$ (relative to $v$) is forgotten in $D'$. In fact the map $D \to D'$ is $\mu_k$-to-$1$.
The decorated marked point $\mathbf{m}_k$ can be placed in any of the $\mu_k$ locations (relative to $v$). All $\mu_k$ such polygon diagrams will give rise to
the same $D'$ after cutting  along $\gamma$. Taking the multiplicity $\mu_k$ into account gives the term
$\sum_{k=2}^n \mu_k P_{g,n-1} ( \mu_1 + \mu_k - 1, \mmu_{X\setminus \{1,k\}} )$ of \eqref{precur-eq}.
\end{description}
\end{proof}

\subsection{Pruned polygon counts}

The recursion for pruned polygon diagrams follows from a similar analysis. It is more tedious due to the
fact that after cutting  along an edge $\gamma$, some other edges may become boundary parallel, so more care is required.

We previously referred to $\overline{n}$ as $\overline{n} = n$ if $n$ is a positive integer, and $\overline{0} = 1$, following \cite{DKM2017}. We now introduce another notation of a similar nature.

\begin{definition}
\label{tilde_notation}
For an integer $\mu$, let $\widetilde{\mu} = \mu$ if $\mu$ is a positive
even integer, and $0$ otherwise. 
\end{definition}

\begin{theorem}\label{qrecursion}
For $(g,n)\neq (0,1),(0,2),(0,3)$, the number of pruned polygon diagrams satisfies the following recursion:
\begin{align}
&Q_{g,n}(\mu_1, \ldots, \mu_n) \nonumber  = 
 \sum_{\substack{i+j+m = \mu_1 \\ i\geq 1, j,m\geq 0}}m Q_{g-1,n+1}(i,j,\mmu_{X\setminus \{1\}}) + \frac{\widetilde{\mu}_1}{2}Q_{g-1,n+1}(0,0,\mmu_{X\setminus \{1\}}) \nonumber\\  
+&\sum_{\substack{\mu_k>0 \\ 2\leq k \leq n}}\left( \sum_{\substack{i+m = \mu_1+\mu_k \\ i\geq 1, m\geq 0}}m \mu_kQ_{g,n-1}(i,\mmu_{X\setminus \{1,k\}}) +
\widetilde{\sum_{\substack{i+x = \mu_1-\mu_k \\ i\geq 1, x\geq 0}}}x \mu_kQ_{g,n-1}(i,\mmu_{X\setminus \{1,k\}})
+ \mu_1 \mu_kQ_{g,n-1}(0,\mmu_{X\setminus \{1,k\}}) \right) \nonumber\\
+& \sum_{\substack{\mu_k=0 \\ 2\leq k \leq n}}\left(\sum_{\substack{i+m = \mu_1 \nonumber \\ i\geq 1, m\geq 0}}m Q_{g,n-1}(i,\mmu_{X\setminus \{1,k\}})+ \widetilde{\mu}_1Q_{g,n-1}(0,\mmu_{X\setminus \{1,k\}})\right)\nonumber \\
+& \sum_{\substack{g_1+g_2=g \\ I\sqcup J = X\setminus \{1\} \\ \text{No discs or annuli}}}\left(\sum_{\substack{i+j+m=\mu_1 \\ i\geq 1, j,m \geq0}}mQ_{g_1,|I|+1}(i,\mmu_{I})Q_{g_2,|J|+1}(j,\mmu_{J}) + \frac{\widetilde{\mu}_1}{2}Q_{g_1,|I|+1}(0,\mmu_{I})Q_{g_2,|J|+1}(0,\mmu_{J})\right) \label{qrecursion-eq}
\end{align}
\end{theorem}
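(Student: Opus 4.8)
The plan is to mirror the proof of Theorem~\ref{thm:P_recursion}, classifying each pruned polygon diagram $D$ according to the topological type of the outgoing edge $\gamma$ from the decorated marked point $\mathbf{m}_1$ on $F_1$. Because $D$ is pruned, $\gamma$ cannot cut off a disc, so there is no $1$-gon term to match the first summand of \eqref{precur-eq}; the surviving possibilities are exactly cases (A)--(C) of Theorem~\ref{thm:P_recursion}: $\gamma$ has both endpoints on $F_1$ and is non-separating; $\gamma$ runs from $F_1$ to some $F_k$ with $k>1$; or $\gamma$ has both endpoints on $F_1$ and is separating. The separating case must be further subdivided according to whether the piece cut off by $\gamma$ that carries no marked points is an annulus (its other boundary being some empty $F_k$) or has strictly more complicated topology. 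This subdivision is the origin of the split between the $\mu_k = 0$ term and the final $g_1+g_2=g$ term carrying the ``no discs or annuli'' constraint, and it is also why $(0,1),(0,2),(0,3)$ must be excluded: those base cases are handled directly in Proposition~\ref{basecases}, and the recursion is only asserted once every simpler piece it produces is non-degenerate.

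First I would run each case as the same cut-and-collapse operation used in Theorem~\ref{thm:P_recursion}: cut $S_{g,n}$ along $\gamma$, collapse $\gamma$ to a single vertex, and record the resulting diagram on a surface of simpler topology, obtaining the four families $S_{g-1,n+1}$, $S_{g,n-1}$, and the two two-piece splittings. The decorated-marked-point multiplicities carry over verbatim from the unpruned argument: merging $F_1$ with $F_k$ forgets the location of $\mathbf{m}_k$ relative to the gluing site, producing the factor $\mu_k$ throughout the $F_1$-to-$F_k$ terms.

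The crucial new ingredient, and the main obstacle, is that cutting along $\gamma$ does not preserve prunedness in either direction. Any edge of $D$ parallel to $\gamma$ (cobounding a rectangle with it) becomes boundary-parallel once $\gamma$ is collapsed, so to land in the pruned count $Q$ on the simpler surface I must further prune the cut diagram, strip off the resulting stack of boundary-parallel bigons, and record how many there are. Conversely, to reconstruct $D$ I glue a strip back along $\gamma$ and reinsert some number of parallel bigons; the count of these bigons becomes the extra summation variable (written $m$, or $x$) in each term, and the shift from $i+j = \mu_1 - 1$ in \eqref{precur-eq} to $i+j+m = \mu_1$ here is precisely this reinserted stack. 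The coefficient $m$ (respectively $\mu_k$ or $\mu_1$) then counts the inequivalent placements of the relevant decorated marked point among the newly created vertices. The $F_1$-to-$F_k$ case is the most involved, since there the parallel family to $\gamma$ and the complementary family running the other way around the merged boundary behave differently; tracking both is what produces the two separate sums indexed by $i+m=\mu_1+\mu_k$ and $i+x=\mu_1-\mu_k$.

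Finally I would account for the parity phenomena. When the reinserted parallel bigons must close up an entire boundary component, or an annular piece with no surviving marked points, they can only do so in pairs, forcing an even count; this is exactly what the notation $\widetilde{\cdot}$ (positive even, else $0$) and the restricted sum $\widetilde{\sum}$ encode, and it isolates the degenerate subcases $i=j=0$ and $i=0$ yielding the $\widetilde{\mu}_1$ terms. The factors of $\tfrac{1}{2}$ attached to $Q_{g-1,n+1}(0,0,\mmu_{X\setminus\{1\}})$ and to the fully degenerate split-off product come from the residual symmetry of those configurations, in which the two sides of $\gamma$ (or the two glued boundaries) may be interchanged. Verifying that the forward pruning map and the reverse gluing-and-reinsertion map are mutually inverse in each case, with these multiplicities and parity constraints in force, completes the proof; confirming that no configuration is double-counted across the disc/annulus threshold separating the $\mu_k=0$ term from the general splitting term is the most delicate piece of bookkeeping.
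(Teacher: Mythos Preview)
Your overall strategy matches the paper's: classify by the outgoing edge $\gamma$ from $\mathbf{m}_1$, cut along $\gamma$, strip off everything that becomes boundary-parallel, and keep track of the multiplicity. The paper even organises cases (A)--(C) almost exactly as you do, with one exception: the configuration where $\gamma$ has both endpoints on $F_1$ and cuts off an annulus parallel to $F_k$ is grouped with case (B) (the $F_1$-to-$F_k$ case), not with case (C), because after choosing an arc $\alpha$ from $F_1$ to $F_k$ inside that annulus the two situations are analysed identically.

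There are two genuine gaps. First, your account of case (B) substantially underestimates what becomes boundary-parallel after cutting. It is not just bigons parallel to $\gamma$: there are \emph{three} homotopy types of edge that become boundary-parallel, namely those parallel to $\alpha$, to $\alpha F_k \bar{\alpha}$, and to $\bar{\alpha} F_1 \alpha$, and the polygons meeting the resulting strips need not be bigons at all---one can have a totally boundary-parallel \emph{triangle}, or ``mixed'' polygons with only some edges in a strip. The paper's case (B) accordingly splits into roughly a dozen sub-cases (bigons only; bigons plus a triangle; mixed polygon with or without a triangle; the outermost strip edges $\gamma_L^1,\gamma_R^1$ coinciding in various ways), each contributing a different summand, and the $m$-sums and $x$-sums only assemble into the clean second line of \eqref{qrecursion-eq} after a page of telescoping. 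Your sketch of ``two families'' and ``reinserted bigons'' would not produce the $\widetilde{\sum}$ term or the $\mu_1\mu_k Q_{g,n-1}(0,\dots)$ term without this finer analysis.

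Second, your explanation of the $\tfrac{1}{2}$ factors is wrong. The $\frac{\widetilde{\mu}_1}{2}Q_{g-1,n+1}(0,0,\mmu_{X\setminus\{1\}})$ term is not a symmetry correction: it is simply the $i=j=0$ instance of the bigon-only sub-case of (A), where all $\mu_1$ vertices on $F_1$ are used up by $m$ parallel bigons, so $2m=\mu_1$ and the multiplicity $m$ literally equals $\mu_1/2$ (and $\mu_1$ must be even, whence $\widetilde{\mu}_1$). The same mechanism produces the $\frac{\widetilde{\mu}_1}{2}$ in the last line. No swapping-of-sides argument is involved.
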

Here ``no discs or annuli" means $(g_1,|I|+1)$ and $(g_2,|J|+1)$ cannot be $(0,1)$ or $(0,2)$. The tilde summation 
$\widetilde{\sum}$ is defined to be
\begin{align*}
\widetilde{\sum_{\substack{i+x = \mu_1-\mu_k \\ i\geq 1, x\geq 0}}}x \mu_kQ_{g,n-1}(i,\mmu_{X\setminus \{1,k\}})= 
\sum_{\substack{i+x = \mu_1-\mu_k \\ i\geq 1, x\geq 0}}x \mu_kQ_{g,n-1}(i,\mmu_{X\setminus \{1,k\}})
- \sum_{\substack{i+x = \mu_k-\mu_1 \\ i\geq 1, x\geq 0}}x \mu_kQ_{g,n-1}(i,\mmu_{X\setminus \{1,k\}})
\end{align*}
Note that when $\mu_1\geq \mu_k$ the second sum vanishes, otherwise the first sum vanishes.
\begin{proof}
Suppose $D$ is a pruned polygon diagram on $S$. Let $\gamma$ be the outgoing edge at the decorated marked point $\mathbf{m}_1$ on $F_1$. Since there is
no $1$-gon in $D$ (they are boundary parallel), the other endpoint $v$ of $\gamma$ is distinct from $\mathbf{m}_1$.
As in \cite{DKM2017}, there are three cases for $\gamma$: (A) it has both endpoints on $F_1$ and is non-separating; (B) it has endpoints on $F_1$ and some other $F_k$, or has both endpoints on $F_1$ and cuts off an annulus parallel to $F_k$; or (C) it has both ends on $F_1$, is separating, and does not cut off an annulus. Each of these cases, especially case (B), has numerous sub-cases, which we now consider in detail.
\begin{description}
\item[(A) $\gamma$ has both endpoints on $F_1$ and is non-separating.]~

If an edge becomes boundary parallel after cutting $S$ along $\gamma$, then it must be parallel to $\gamma$ on $S$ (relative to endpoints) to begin with. 
Given two edges $\beta_1$ and $\beta_2$, both parallel to $\gamma$, let $I$ be a strip bounded by $\beta_1$, $\beta_2$ and portions of $F_1$. 
This strip $I$ is unique, because after we cut open along $I$,
$\beta$ and $\beta'$ belong to different boundary components, so they cannot bound any other strips.
There is a unique minimal strip $A:[0,1]^2\to S$ containing all edges 
parallel to $\gamma$, given by the union of connecting strips between all pairs of edges parallel to $\gamma$.
The left (resp. right) boundary of $A$ is an edge 
$\gamma_L$ (resp. $\gamma_R$) joining two vertices $p_L$ and $q_L$ (resp. $p_R$ and $q_R$), and the bottom (resp. top) boundary of $A$ is an interval
on $F_1$ from $p_L$ to $p_R$ (resp. $q_R$ to $q_L$). 
Note that $A$ may be degenerate, i.e. $\gamma_L$ and $\gamma_R$ may have one or both of their endpoints in common, or 
they are the same edge $\gamma$.

\begin{figure}
\begin{center}
\includegraphics[scale=0.75]{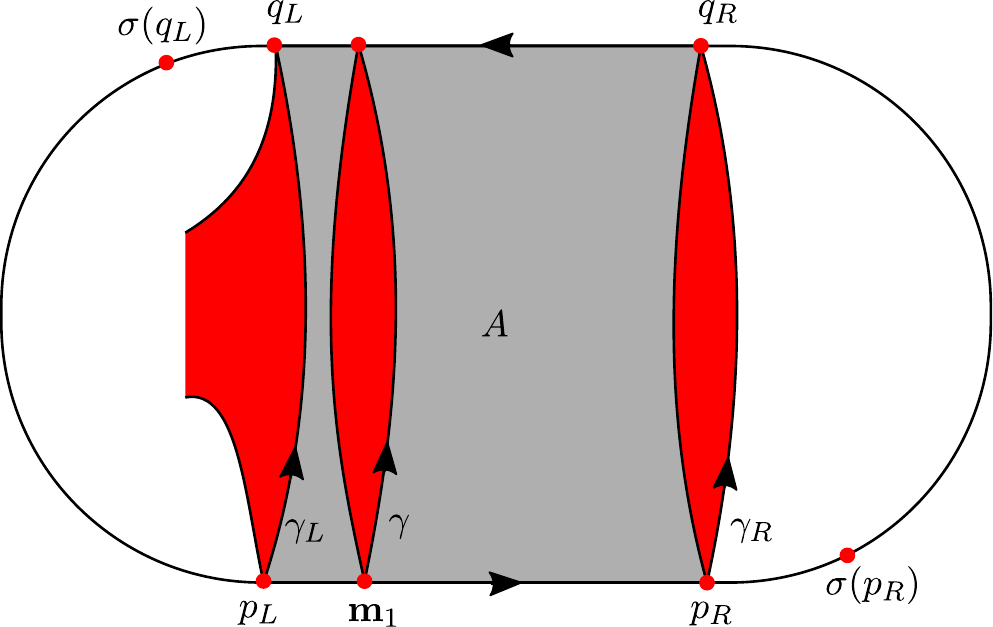}
\caption{Possible configurations of polygons in case (A).}
\label{caseA}
\end{center}
\end{figure}
 
Observe that all the edges in $A$, with the possible exception of $\gamma_L$ and $\gamma_R$, form a block of consecutive 
parallel bigons inside $A$.
Let there be $m\geq 1$ polygons with at least one edge parallel to $\gamma$. 
See figure \ref{caseA}.
There are four cases.
\begin{enumerate}[label=(\arabic*)]
\item All $m$ such polygons are bigons. In this case the $\mu_1$ vertices along $F_1$ are divided into
$4$ cyclic blocks of consecutive vertices: there is a block of $m$ vertices $(p_1,\ldots, p_m)$
followed by $j\geq 0$ vertices, followed by another block of $m$ consecutive vertices $(q_m,\ldots, q_1)$, followed
by $i\geq 0$ vertices, such that there is a bigon between each pair of vertices $\{p_i,q_i\}$, and $\mathbf{m}_1 \in \{p_1,\ldots, p_m\}$.
Remove all $m$ bigons from the pruned polygon diagram $D$ and cut $S$ along $\gamma$. If $j>0$ then let $\sigma(p_m)$ be
the decorated marked point on the new boundary component $F'_1$. If $i>0$ then let $\sigma(q_1)$ be
the decorated marked point on that new boundary component $F'_0$. This produces a pruned polygon diagram $D'$ on
$S'_{g-1,n+1}$ with $(i,j,\mu_2,\ldots,\mu_n)$ boundary vertices. The map $D\to D'$ is $m$-to-$1$, since $\mathbf{m}_1$ can
be any one of $\{p_1,\ldots, p_m\}$ and still produce the same pruned polygon diagram $D'$. Conversely $D$ can be
reconstructed for $D'$ up to the possible location of $\mathbf{m}_1$ as one of $\{p_1,\ldots, p_m\}$. 
Therefore we have the following contribution to \eqref{qrecursion-eq}:
\begin{equation}\label{eq1}
\sum_{\substack{i+j+2m = \mu_1 \\ m\geq 1, i,j\geq 0}}m Q_{g-1,n+1}(i,j,\mmu_{X\setminus \{1\}}).
\end{equation}
\item $\gamma_L$ is part of a polygon $K$ which is not a bigon, all other polygons are bigons. If $\gamma_L \neq \gamma_R$ then
$K$ and $A$ lie on the opposite sides of $\gamma_L$ (otherwise $K\subseteq A$, so must be a bigon), and there are $m-1$ bigons in $A$. 
Remove all bigons, cut $S$ along $\gamma_L$,
collapse $\gamma_L$ to a single vertex $\mathbf{m}'_0$ which we take to be the decorated marked point on the new boundary component $F'_0$, 
and let $\sigma(p_R)$ be the decorated marked point on $F'_1$. This produces a pruned polygon diagram $D'$. Similar to the previous case,
the map $D\to D'$ is $m$-to-$1$, as $\mathbf{m}_1$ can any one of the $m$ vertices between $p_L$ and $p_R$. Therefore we have the following 
contribution to \eqref{qrecursion-eq}:
\begin{equation}\label{eq2}
\sum_{\substack{i+j+2m = \mu_1 \\ m \geq 1, i,j\geq 0}}m Q_{g-1,n+1}(i+1,j,\mmu_{X\setminus \{1\}}) = 
\sum_{\substack{i+j+2m-1 = \mu_1 \\ i, m\geq 1,j\geq 0}}m Q_{g-1,n+1}(i,j,\mmu_{X\setminus \{1\}}).
\end{equation}
Note that this formula includes the contribution from the special case $\gamma_L = \gamma_R = \gamma$, where $m=1$.
\item $\gamma_R$ is part of a polygon $K$ which is not a bigon, and all other polygons are bigons. This is almost identical to the previous case,
except now $\gamma$ cannot be the edge $\gamma_R$. (If we had $\gamma = \gamma_R$ then, since $\gamma$ is the outgoing edge from $\mathbf{m}_1$, the polygon containing $\gamma$
would have to be on the same side of $\gamma$ as $A$.) The map $D \mapsto D'$ is now $(m-1)$-to-$1$, as $\mathbf{m}_1$ cannot be $p_R$.
Therefore we have the following contribution to \eqref{qrecursion-eq}:
\begin{equation}\label{eq3}
\sum_{\substack{i+j+2m = \mu_1 \\ m \geq 1, i,j\geq 0}}(m-1) Q_{g-1,n+1}(i,j+1,\mmu_{X\setminus \{1\}}) = 
\sum_{\substack{i+j+2m-1 = \mu_1 \\ j, m\geq 1,i\geq 0}}(m-1) Q_{g-1,n+1}(i,j,\mmu_{X\setminus \{1\}}).
\end{equation}
Note that this formula correctly excludes the special case $\gamma_L = \gamma_R = \gamma$, where $(m-1)=0$ and the formula vanishes.
\item $\gamma_L$ and $\gamma_R$ are each part of some polygon which is not a bigon, all other polygons are bigons. 
We allow $\gamma_L$ and $\gamma_R$ to be different edges of the same polygon. We obtain a pruned polygon diagram $D'$
by removing the $(m-2)$ bigons and collapsing $\gamma_L$ and $\gamma_R$ to 
decorated marked points $\mathbf{m}'_0$ and $\mathbf{m}'_1$. For the same reason as the 
previous case, $\gamma$ cannot be the edge $\gamma_R$, so the map $D\to D'$ is only 
$(m-1)$-to-$1$. Therefore the contribution to \eqref{qrecursion-eq} is
\begin{equation}\label{eq4}
\sum_{\substack{i+j+2m = \mu_1 \\ m\geq 1, i,j\geq 0}}(m-1) Q_{g-1,n+1}(i+1,j+1,\mmu_{X\setminus \{1\}}) = 
\sum_{\substack{i+j+2m = \mu_1 \\ i,j\geq 1, m\geq 0}}m Q_{g-1,n+1}(i,j,\mmu_{X\setminus \{1\}}).
\end{equation}
\end{enumerate}
Now we compute the total contribution from cases (A)(1)--(4). We drop the subscripts $g-1,n+1$ from $Q_{g-1,n+1}$ and $X \backslash \{1\}$ from $\mmu_{X \setminus \{1\}}$ for convenience. Summing expressions \eqref{eq1} and \eqref{eq4} and separating the terms according to where $i,j$ are zero or nonzero, we obtain
\begin{align}\label{eq100}
&\left( \sum_{\substack{i+j+2m = \mu_1 \\ m\geq 1, i,j\geq 0}} + \sum_{\substack{i+j+2m = \mu_1 \\ i,j\geq 1, m\geq 0}} \right) m Q (i,j,\mmu) \nonumber \\
&\quad =\sum_{\substack{i+j+2m = \mu_1 \\ i,j,m\geq 1}}2m Q(i,j,\mmu) + \sum_{\substack{j+2m = \mu_1 \\ j,m\geq 1}}m Q (0,j,\mmu) \nonumber 
+\sum_{\substack{i+2m = \mu_1 \\ i,m\geq 1}}m Q(i,0,\mmu) + \frac{\widetilde{\mu}_1}{2}Q (0,0,\mmu)\nonumber \\
&\quad = \sum_{\substack{i+j+2m = \mu_1 \\ i,m\geq 1, j\geq 0}}2m Q (i,j,\mmu) + \frac{\widetilde{\mu}_1}{2}Q (0,0,\mmu)
\end{align}
%\begin{align}\label{eq100}
%&\sum_{\substack{i+j+2m = \mu_1 \\ m\geq 1, i,j\geq 0}}m Q_{g-1,n+1}(i,j,\mmu_{X\setminus \{1\}})+
%\sum_{\substack{i+j+2m = \mu_1 \\ i,j\geq 1, m\geq 0}}m Q_{g-1,n+1}(i,j,\mmu_{X\setminus \{1\}}) \nonumber \\
%=&\sum_{\substack{i+j+2m = \mu_1 \\ i,j,m\geq 1}}2m Q_{g-1,n+1}(i,j,\mmu_{X\setminus \{1\}}) + \sum_{\substack{j+2m = \mu_1 \\ j,m\geq 1}}m Q_{g-1,n+1}(0,j,\mmu_{X\setminus \{1\}}) \nonumber \\
%&+\sum_{\substack{i+2m = \mu_1 \\ i,m\geq 1}}m Q_{g-1,n+1}(i,0,\mmu_{X\setminus \{1\}}) + \frac{\widetilde{\mu}_1}{2}Q_{g-1,n+1}(0,0,\mmu_{X\setminus \{1\}})\nonumber \\
%= &\sum_{\substack{i+j+2m = \mu_1 \\ i,m\geq 1, j\geq 0}}2m Q_{g-1,n+1}(i,j,\mmu_{X\setminus \{1\}}) + \frac{\widetilde{\mu}_1}{2}Q_{g-1,n+1}(0,0,\mmu_{X\setminus \{1\}})
%\end{align}
Similarly for expressions \eqref{eq2} and \eqref{eq3},
\begin{align}\label{eq101}
\sum_{\substack{i+j+2m-1 = \mu_1 \\ i, m\geq 1,j\geq 0}} & m Q(i,j,\mmu)+\sum_{\substack{i+j+2m-1 = \mu_1 \\ j, m\geq 1,i\geq 0}}(m-1) Q(i,j,\mmu)\nonumber \\
&=\sum_{\substack{i+j+2m-1 = \mu_1 \\ i,j,m\geq 1}}(2m-1) Q(i,j,\mmu)+
\sum_{\substack{i+2m-1 = \mu_1 \\ i,m\geq 1}}m Q(i,0,\mmu) \nonumber 
+\sum_{\substack{j+2m-1 = \mu_1 \\ j,m\geq 1}}(m-1) Q(0,j,\mmu)\nonumber \\
&=\sum_{\substack{i+j+2m-1 = \mu_1 \\ i,m\geq 1, j\geq 0}}(2m-1) Q(i,j,\mmu)
\end{align}
%\begin{align}\label{eq101}
%&\sum_{\substack{i+j+2m-1 = \mu_1 \\ i, m\geq 1,j\geq 0}}m Q_{g-1,n+1}(i,j,\mmu_{X\setminus \{1\}})+\sum_{\substack{i+j+2m-1 = \mu_1 \\ j, m\geq 1,i\geq 0}}(m-1) Q_{g-1,n+1}(i,j,\mmu_{X\setminus \{1\}})\nonumber \\
%=&\sum_{\substack{i+j+2m-1 = \mu_1 \\ i,j,m\geq 1}}(2m-1) Q_{g-1,n+1}(i,j,\mmu_{X\setminus \{1\}})+
%\sum_{\substack{i+2m-1 = \mu_1 \\ i,m\geq 1}}m Q_{g-1,n+1}(i,0,\mmu_{X\setminus \{1\}}) \nonumber \\
%&+\sum_{\substack{j+2m-1 = \mu_1 \\ j,m\geq 1}}(m-1) Q_{g-1,n+1}(0,j,\mmu_{X\setminus \{1\}})\nonumber \\
%=&\sum_{\substack{i+j+2m-1 = \mu_1 \\ i,m\geq 1, j\geq 0}}(2m-1) Q_{g-1,n+1}(i,j,\mmu_{X\setminus \{1\}})
%\end{align}
Adding \eqref{eq100} and \eqref{eq101} we have the first line of \eqref{qrecursion-eq}.

\item[(B) $\gamma$ has endpoints on $F_1$ and $F_k$, or has both endpoints on $F_1$ and cuts off an annulus parallel to $F_k$.] ~

Here $k \neq 1$. Note that since $(g,n)\neq (0,3)$, if $\gamma$ cuts off an annulus parallel to $F_k$, the remaining surface is not an annulus. Hence
different values of $k$ give different pruned polygon diagrams. There is no double counting when we sum over $k$.

To standardise the possibilities for $\gamma$, we define a path $\alpha$ from $F_1$ to $F_k$ as follows; $\bar{\alpha}$ denotes $\alpha$ with reversed orientation. If $\gamma$ has endpoints on $F_1$ and $F_k$, then let $\alpha = \gamma$. 
In this case, the edges that become parallel after $S$ is cut along $\gamma$ are precisely three types of curves: those parallel to the concatenated paths $\alpha$, $\alpha F_k \bar{\alpha}$, and $\bar{\alpha} F_1 \alpha$. On the other hand, if $\gamma$ has both endpoints on $F_1$ and cuts off an annulus parallel to $F_k$, then let $\alpha$ be a curve inside that annulus, connecting $F_1$ to $F_k$. In this case, the curves that become boundary parallel after $S$ is cut along $\gamma$ must be parallel to $\gamma$. See figure \ref{paths}.

\begin{figure}
\begin{center}
\includegraphics[scale=0.7]{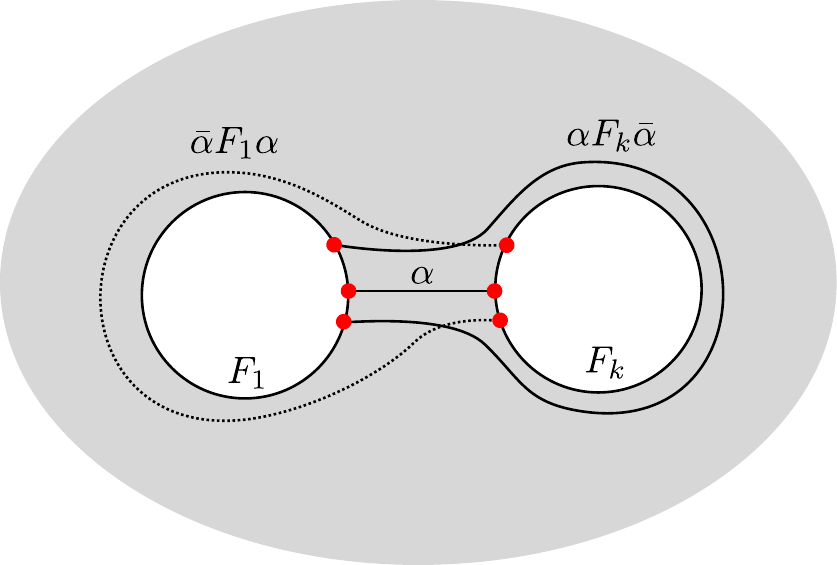}
\caption{The paths $\alpha$ and related paths in case (B).}
\label{paths}
\end{center}
\end{figure}

Since $S$ is not an annulus, there is a unique minimal strip $A^1$ containing all edges parallel to $\alpha$, 
bounded by edges $\gamma^1_L$ (resp. $\gamma^1_R$) joining two vertices $p^1_L\in F_1$ and $q^1_L\in F_k$ 
(resp. $p^1_R$ and $q^1_R$). The top (resp. bottom) boundary of $A^1$ is an interval
on $F_1$ (resp. $F_k$) from $p^1_L$ to $p^1_R$ (resp. $q^1_R$ to $q^1_L$). Similarly there are unique minimal strips $A^2$ 
and $A^3$ containing all edges of the second and third type respectively, with analogous notations.
Note that edges of the second and third types cannot appear simultaneously, so $A^2$ and $A^3$ cannot both be non-empty.
All three strips $A^i$ may be degenerate. See figure \ref{A_i_def}.

\begin{figure}
\begin{center}
\includegraphics[scale=0.65]{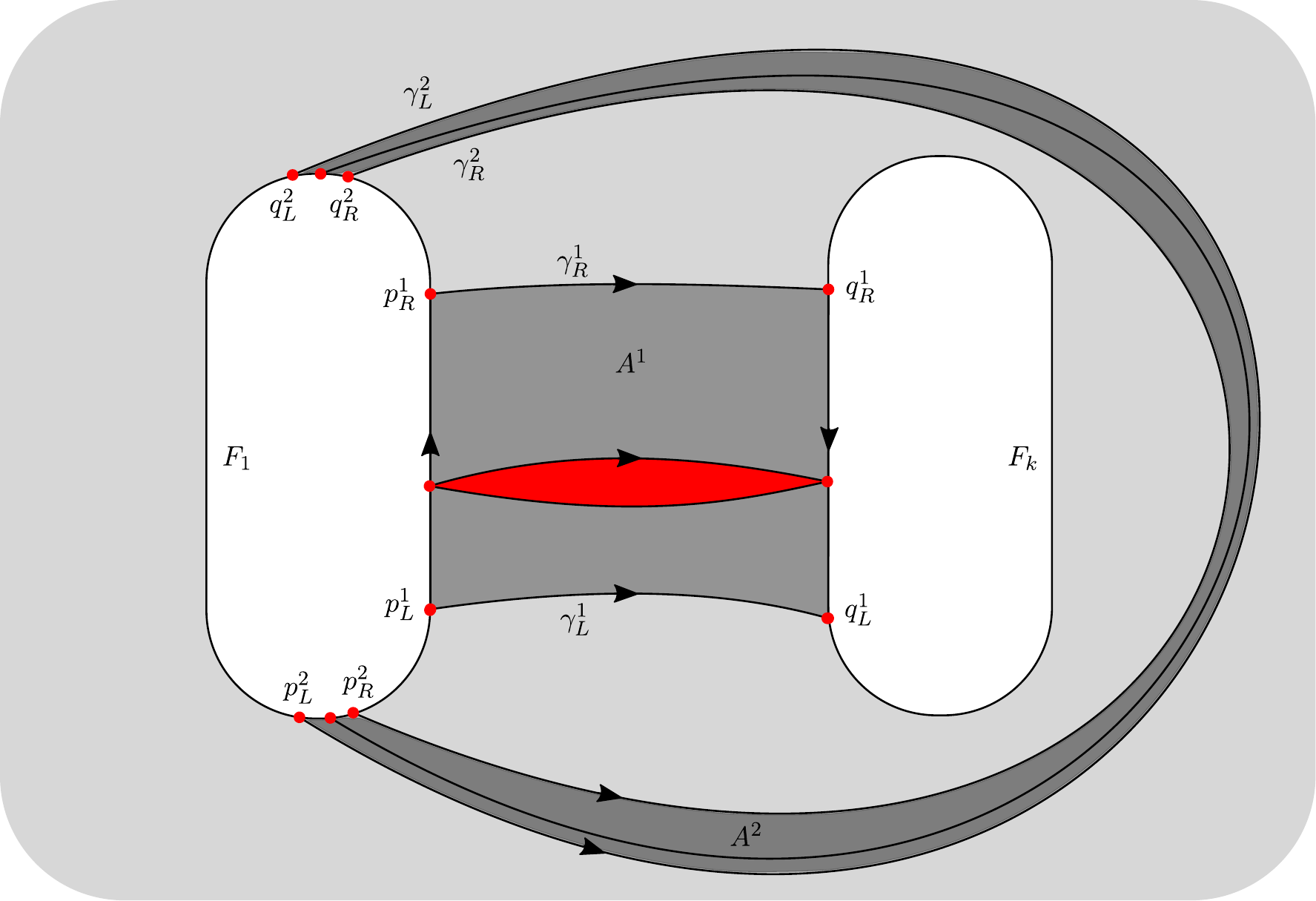}
\caption{The configurations of the strips $A^i$. In this figure $A^1, A^2$ are nonempty.}
\label{A_i_def}
\end{center}
\end{figure}

Call a polygon \emph{partially boundary parallel} if at least one of its edges is of the three types $\alpha, \alpha F_k \bar{\alpha}, \bar{\alpha} F_1 \alpha$. Call a polygon \emph{totally boundary parallel} if all of its edges are of these three types, and \emph{mixed} if
it is partially boundary parallel but not totally boundary parallel.
A totally boundary parallel polygon is either a bigon, or a triangle with 
two edges parallel to $\alpha$ and the third edge parallel to $\alpha F_k \bar{\alpha}$ or $\bar{\alpha} F_1 \alpha$.
Furthermore there can be at most one totally boundary parallel triangle.
Let there be $m$ partially boundary parallel polygons. Note $m \geq 1$, since $\gamma$ lies in a partially boundary parallel polygon.

Assume $\mu_k>0$. We split into the following sub-cases: all $m$ partially boundary parallel polygons are bigons; $m-1$ bigons and one totally boundary parallel triangle; there is a total boundary parallel triangle and a mixed polygon; there is a mixed polygon but no totally boundary parallel triangle.
\begin{enumerate}[label=(\arabic*)]
\item All $m$ partially boundary parallel polygons are bigons. We then split further into sub-cases accordingly as there are bigons parallel to $\alpha F_k \bar{\alpha}$ or $\bar{\alpha} F_1 \alpha$, or not.
\begin{enumerate}
\item There are no bigons parallel to $\alpha F_k \bar{\alpha}$ or $\bar{\alpha} F_1 \alpha$. Then there are $m$ consecutive 
bigons between $F_1$ and $F_k$. Removing all $m$ bigons and cutting $S$ along $\gamma$ gives a pruned polygon diagram $D'$ with 
$i=\mu_1+\mu_k - 2m$ vertices on the new boundary component $F'_1$. When $i>0$, the decorated marked point on $F'_1$ is set
to be $\sigma(p^1_R)$ if $\mu_1 > m$, and $\sigma(q^1_L)$ if $\mu_1 = m$. The map $D \mapsto D'$ is $m\mu_k$-to-$1$, since
$\mathbf{m}_1$ can be any of $m$ vertices of the bigons on $F_1$, and $\mathbf{m}_k$ can be any of the $\mu_k$ vertices on $F_k$.
Therefore we have the contribution
\begin{align}\label{eq111}
\sum_{\substack{i+2m = \mu_1+\mu_k \\ 1\leq m \leq \min(\mu_1,\mu_k), i\geq 0}}m {\mu}_kQ_{g,n-1}(i,\mmu_{X\setminus \{1,k\}}).
\end{align}
\item \label{case1b} There are $x\geq 1$ bigons parallel to $\alpha F_k \bar{\alpha}$. See figure \ref{fig:case1b}. Since $\alpha F_k \bar{\alpha}$ cuts off
an annulus parallel to $F_k$, the $\mu_k$ vertices on $F_k$ belong to $\mu_k$ bigons between $F_1$ and $F_k$.
Removing all $m = x + \mu_k$ bigons and cutting
along $\gamma$ gives a pruned polygon diagram $D'$ with $i=\mu_1 - m - x$ vertices on the new boundary component $F'_1$.
The decorated marked point on $F'_1$ is set to be $\sigma(q^1_L)$ if $i>0$. The map $D \mapsto D'$ is $(2x+\mu_k)\mu_k$-to-$1$, since
$\mathbf{m}_1$ can be any of the $(2x+\mu_k)$ vertices of the bigons on $F_1$. 
Therefore we have the contribution
\[
\sum_{\substack{i+2x = \mu_1-\mu_k \\ x\geq 1, i\geq 0}}(2x+\mu_k) {\mu}_kQ_{g,n-1}(i,\mmu_{X\setminus \{1,k\}}).
\]

\begin{figure}
\begin{center}
\includegraphics[scale=0.65]{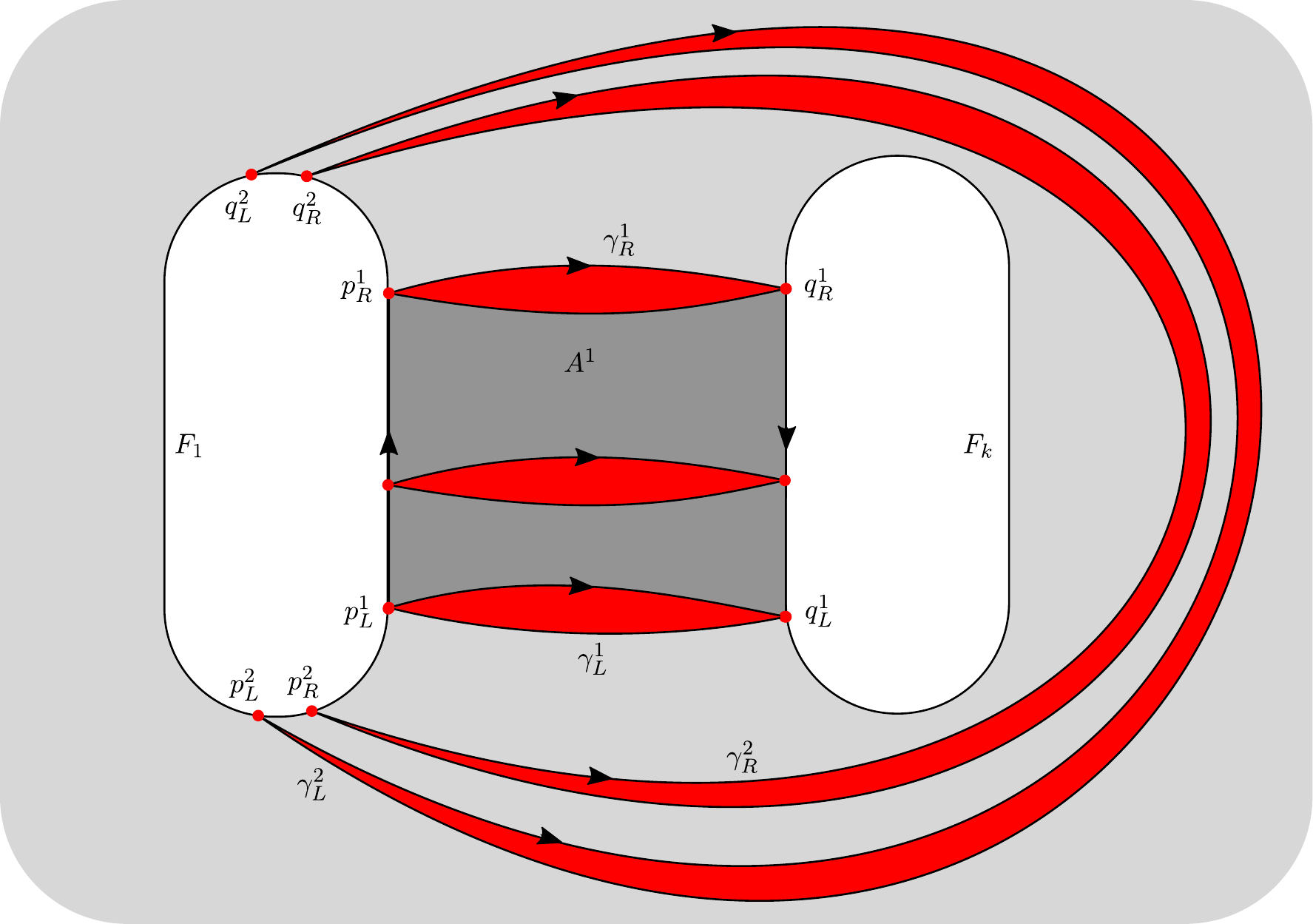}
\caption{Configuration of polygons in case (B)(1)(b).}
\label{fig:case1b}
\end{center}
\end{figure}

Splitting the sum in by writing $2x+\mu_k$ as $(x+\mu_k) + x$ and setting $m = x+\mu_k$, we note that $i+2x = \mu_1 - \mu_k$ becomes $i + 2m = \mu_1 + \mu_k$ and obtain 
\begin{equation}
\label{eq112}
\sum_{\substack{i+2m = \mu_1+\mu_k \\ m\geq \mu_k+1, i\geq 0}}m{\mu}_kQ_{g,n-1}(i,\mmu_{X\setminus \{1,k\}})+\sum_{\substack{i+2x = \mu_1-\mu_k \\ x\geq 1, i\geq 0}}x{\mu}_kQ_{g,n-1}(i,\mmu_{X\setminus \{1,k\}}).
\end{equation}
%\begin{align}\label{eq112}
%&\sum_{\substack{i+2x = \mu_1-\mu_k \\ x\geq 1, i\geq 0}}(2x+\mu_k) {\mu}_kQ_{g,n-1}(i,\mmu_{X\setminus \{1,k\}}) \nonumber \\
%=&\sum_{\substack{i+2x = \mu_1-\mu_k \\ x\geq 1, i\geq 0}}(x+\mu_k) {\mu}_kQ_{g,n-1}(i,\mmu_{X\setminus \{1,k\}})+\sum_{\substack{i+2x = \mu_1-\mu_k \\ x\geq 1, i\geq 0}}x{\mu}_kQ_{g,n-1}(i,\mmu_{X\setminus \{1,k\}}) \nonumber \\
%=&\sum_{\substack{i+2m = \mu_1+\mu_k \\ m\geq \mu_k+1, i\geq 0}}m{\mu}_kQ_{g,n-1}(i,\mmu_{X\setminus \{1,k\}})+\sum_{\substack{i+2x = \mu_1-\mu_k \\ x\geq 1, i\geq 0}}x{\mu}_kQ_{g,n-1}(i,\mmu_{X\setminus \{1,k\}})
%\end{align}
\item \label{case1c} There are $x\geq 1$ bigons parallel to $\bar{\alpha} F_1 \alpha$. This is same as the previous case with $F_1$ and $F_k$ interchanged. 
The map $D \mapsto D'$ is $\mu_1\mu_k$-to-$1$, since the bigons now have $\mu_1$ vertices on $F_1$. 
Therefore we have the contribution:
\[
\sum_{\substack{i+2x = \mu_k-\mu_1 \\ x\geq 1, i\geq 0}}\mu_1 {\mu}_k Q_{g,n-1}(i,\mmu_{X\setminus \{1,k\}}).
\]
Writing $\mu_1$ as $(x + \mu_1) - x$ and setting $m = x + \mu_1$, we note that $i+2x = \mu_k - \mu_1$ becomes $i+2m = \mu_1 + \mu_k$, and obtain
\begin{equation}
\label{eq113}
\sum_{\substack{i+2m = \mu_1+\mu_k \\ m\geq \mu_1+1, i\geq 0}}m{\mu}_kQ_{g,n-1}(i,\mmu_{X\setminus \{1,k\}})-\sum_{\substack{i+2x = \mu_k-\mu_1 \\ x\geq 1, i\geq 0}}x{\mu}_kQ_{g,n-1}(i,\mmu_{X\setminus \{1,k\}}).
\end{equation}
%\begin{align}\label{eq113}
%&\sum_{\substack{i+2x = \mu_k-\mu_i \\ x\geq 1, i\geq 0}}\mu_1 {\mu}_kQ_{g,n-1}(i,\mmu_{X\setminus \{1,k\}}) \nonumber \\
%=&\sum_{\substack{i+2x = \mu_k-\mu_i \\ x\geq 1, i\geq 0}}(x+\mu_1) {\mu}_kQ_{g,n-1}(i,\mmu_{X\setminus \{1,k\}})-\sum_{\substack{i+2x = \mu_k-\mu_1 \\ x\geq 1, i\geq 0}}x{\mu}_kQ_{g,n-1}(i,\mmu_{X\setminus \{1,k\}}) \nonumber \\
%=&\sum_{\substack{i+2m = \mu_1+\mu_k \\ m\geq \mu_1+1, i\geq 0}}m{\mu}_kQ_{g,n-1}(i,\mmu_{X\setminus \{1,k\}})-\sum_{\substack{i+2x = \mu_k-\mu_1 \\ x\geq 1, i\geq 0}}x{\mu}_kQ_{g,n-1}(i,\mmu_{X\setminus \{1,k\}})
%\end{align}
\end{enumerate}
Observe that the index set $\{i+2m = \mu_1+\mu_k, m\geq 1, i\geq 0\}$ is the disjoint union of index sets
$\{i+2m = \mu_1+\mu_k, 1\leq m \leq \min(\mu_1,\mu_k), i\geq 0\}$, $\{i+2m = \mu_1+\mu_k,  m \geq \mu_k+1, i\geq 0\}$,
and $\{i+2m = \mu_1+\mu_k,  m \geq \mu_i+1, i\geq 0\}$. 
(If $m \geq \mu_k + 1$ then $\mu_1 + \mu_k = i + 2m \geq 2\mu_k + 2$, hence $\mu_1 \geq \mu_k + 2$; similarly if $m \geq \mu_1 + 1$ then $\mu_k \geq \mu_1 + 2$. So the second and third sets are disjoint.)

Dropping the subscript $g,n-1$ from $Q$ and $X \setminus \{1,k\}$ from $\mmu$ for convenience, we find the sum of \eqref{eq111}, \eqref{eq112}, \eqref{eq113} is
\begin{equation}
\label{eq21}
\sum_{\substack{i+2m = \mu_1+\mu_k \\ m\geq 1, i\geq 0}}m {\mu}_k Q (i,\mmu)+
\sum_{\substack{i+2x = \mu_1-\mu_k \\ x\geq 1, i\geq 0}}x {\mu}_k Q (i,\mmu) 
-\sum_{\substack{i+2x = \mu_k-\mu_1 \\ x\geq 1, i\geq 0}}x {\mu}_k Q (i,\mmu).
\end{equation}
%\begin{align}\label{eq21}
%&\sum_{\substack{i+2m = \mu_1+\mu_k \\ m\geq 1, i\geq 0}}m {\mu}_kQ_{g,n-1}(i,\mmu_{X\setminus \{1,k\}})+
%\sum_{\substack{i+2x = \mu_1-\mu_k \\ x\geq 1, i\geq 0}}x {\mu}_kQ_{g,n-1}(i,\mmu_{X\setminus \{1,k\}}) \nonumber \\
%&-\sum_{\substack{i+2x = \mu_k-\mu_1 \\ x\geq 1, i\geq 0}}x {\mu}_kQ_{g,n-1}(i,\mmu_{X\setminus \{1,k\}})
%\end{align}

\item There is one totally boundary parallel triangle and $m-1$ bigons.  
\begin{enumerate}
\item \label{case2a} The triangle has two edges parallel to $\alpha$ and the third edge parallel to $\alpha F_k \bar{\alpha}$. 
See figure \ref{fig:case2a}.
The configuration of bigons and triangle is very similar to that of case (B)(1)(b), the only difference is
the innermost bigon parallel to $\alpha F_k \bar{\alpha}$ now becomes the totally boundary parallel triangle.
There are $x-1$ bigons parallel to $\alpha F_k \bar{\alpha}$, $1$ totally boundary parallel triangle, and
$\mu_k-1$ bigons parallel to $\alpha$. An analogous calculation shows we have the contribution
\begin{align}\label{eq22}
\sum_{\substack{i+2m+1 = \mu_1+\mu_k \\ m\geq \mu_k, i\geq 0}}m {\mu}_kQ_{g,n-1}(i,\mmu_{X\setminus \{1,k\}})+
\sum_{\substack{i+2x-1 = \mu_1-\mu_k \\ x\geq 1, i\geq 0}}x {\mu}_kQ_{g,n-1}(i,\mmu_{X\setminus \{1,k\}}).
\end{align}

\begin{figure}
\begin{center}
\includegraphics[scale=0.65]{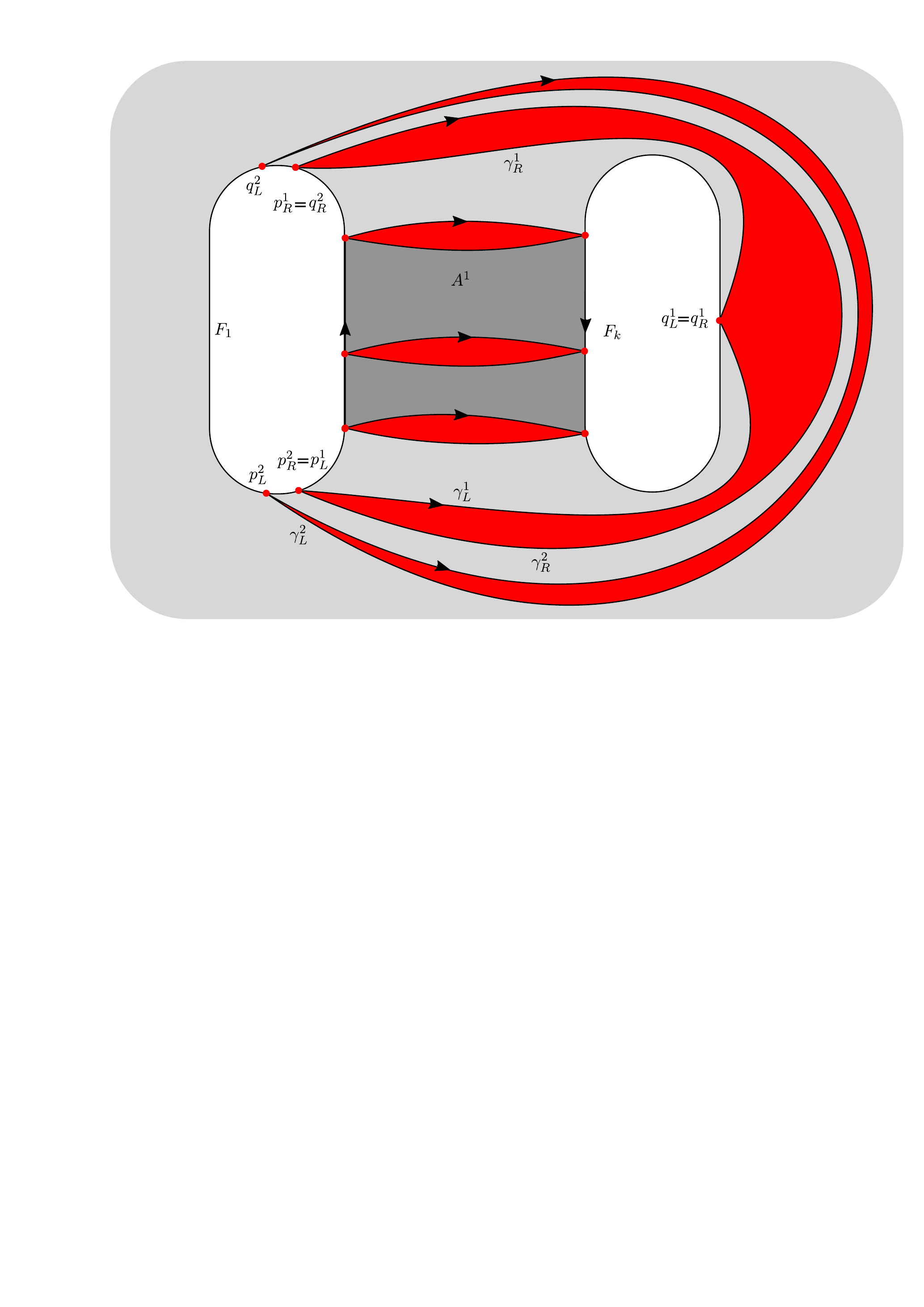}
\caption{Configuration of polygons in case (B)(2)(a).}
\label{fig:case2a}
\end{center}
\end{figure}

\item The triangle has two edges parallel to $\alpha$ and the third edge parallel to $\bar{\alpha} F_1 \alpha$. 
This is very similar to case (B)(1)(c). An analogous calculation shows we have the contribution
\begin{align}\label{eq23}
\sum_{\substack{i+2m+1 = \mu_1+\mu_k \\ m\geq \mu_1, i\geq 0}}m {\mu}_kQ_{g,n-1}(i,\mmu_{X\setminus \{1,k\}})-
\sum_{\substack{i+2x-1 = \mu_k-\mu_1 \\ x\geq 1, i\geq 0}}(x-1) {\mu}_kQ_{g,n-1}(i,\mmu_{X\setminus \{1,k\}}).
\end{align}
\end{enumerate}
\item There are some mixed polygons and a totally boundary parallel triangle. The edge of the triangle not parallel to $\alpha$ is then parallel to either $\alpha F_k \bar{\alpha}$ or $\bar{\alpha} F_1 \alpha$; we consider the two possibilities separately.
\begin{enumerate}
\item \label{case3a} The third edge of the triangle is parallel to $\alpha F_k \bar{\alpha}$. If we view $F_k$ as on the ``inside" of
an edge parallel to $\alpha F_k \bar{\alpha}$, it is easy to see that only the ``outermost" edge
, $\gamma^2_L$ on the minimal strip $A^2$, can be an edge of a mixed polygon. Hence there
is only one mixed polygon, an it is on the outside of $\gamma^2_L$. On the inside of $\gamma^2_L$ we have exactly the 
same configuration of totally boundary parallel polygons as Case (B)(2)(a) and figure \ref{fig:case2a}. There are $\mu_k-1$ bigons parallel to $\alpha$.
Let there be $x-1$ bigons parallel to $\alpha F_k \bar{\alpha}$, and $i$ vertices on $F_1$ outside $\gamma^2_L$.
Then $\mu_1=i+2x+\mu_k+1$ and $m=x+\mu_k$. We obtain a pruned polygon diagram $D'$ by removing all totally 
boundary parallel bigons and triangle, cutting $S$ along $\gamma^2_L$ and collapsing $\gamma^2_L$ into a new vertex on the 
new boundary component $F'_1$ of $S'$, which we set to be the decorated marked point $\mathbf{m}'_1$. Consider the
possible locations of $\mathbf{m}_1$. It can be a vertex on $F_1$ of any of the $[(x-1)+(\mu_k-1)]$ bigons, of which there are $2(x-1) + (\mu_k - 1)$. It can be either of the two vertices of
the triangle on $F_1$. Or it could be the vertex $p^2_L$, but not $q^2_L$, once again due to $\gamma$ being an outgoing edge from
$\mathbf{m}_1$. (If $q^2_L$ is $\mathbf{m}_1$, then $\gamma$ is $\gamma^2_L$. If $\gamma^2_L$ is outgoing, then the polygon 
containing $\gamma^2_L$ is on the inside of $\gamma^2_L$, making it totally boundary parallel, a contradiction.) Hence the
multiplicity of the map $D \mapsto D'$ is $(2(x-1)+(\mu_k-1)+2+1)\mu_k=(2x+\mu_k)\mu_k$. An analogous calculation shows we 
have the contribution
\begin{align}\label{eq24}
\sum_{\substack{i+2m+1 = \mu_1+\mu_k \\ m\geq \mu_k+1, i\geq 0}}m {\mu}_kQ_{g,n-1}(i+1,\mmu_{X\setminus \{1,k\}})+
\sum_{\substack{i+2x+1 = \mu_1-\mu_k \\ x\geq 1, i\geq 0}}x {\mu}_kQ_{g,n-1}(i+1,\mmu_{X\setminus \{1,k\}}).
\end{align}
\item \label{case3b} The third edge of the triangle is parallel to $\bar{\alpha} F_1 \alpha$. This is the same as the previous
case with $F_1$ and $F_k$ interchanged. The map $D \mapsto D'$ is $\mu_1\mu_k$-to-1. An analogous calculation shows we 
have the contribution.
\begin{align}\label{eq25}
\sum_{\substack{i+2m+1 = \mu_1+\mu_k \\ m\geq \mu_1+1, i\geq 0}}m {\mu}_kQ_{g,n-1}(i+1,\mmu_{X\setminus \{1,k\}})-
\sum_{\substack{i+2x+1 = \mu_k-\mu_1 \\ x\geq 1, i\geq 0}}x {\mu}_kQ_{g,n-1}(i+1,\mmu_{X\setminus \{1,k\}}).
\end{align}
\end{enumerate}
\item There are some mixed polygons but no totally boundary parallel triangle. We now split into cases accordingly as there are edges parallel to $\alpha F_k \bar{\alpha}$ or $\bar{\alpha} F_a \alpha$ or not. There cannot be edges parallel to both, so we have 3 sub-cases.
\begin{enumerate}
\item There are no edges parallel to $\alpha F_k \bar{\alpha}$ or $\bar{\alpha} F_1 \alpha$. 
Consider the minimal strip $A^1$ containing all edges parallel to $\alpha$. We now consider the leftmost and rightmost edges of this strip $\gamma^1_L$ and $\gamma^1_R$, and to what extent they coincide. They may (i) be the same edge; or (ii) they may share both endpoints but be distinct edges; or they may share a vertex on (iii) $F_k$ or (iv) $F_1$ only; or they may be disjoint. When they are disjoint, (v) $\gamma_L^1$ or (vi) $\gamma_R^1$ or (vii) both may belong to mixed polygons. This leads to the 7 sub-cases below.
\begin{enumerate}[label=(\roman*)]
\item \label{casei} $\gamma^1_L = \gamma^1_R = \gamma$. Then there are no other edges parallel to $\gamma$ and thus no
bigons. Since $\gamma$ is
an outgoing edge by assumption, it bounds a mixed polygon to the left. This configuration will be covered in 
Case (B)(4)(a)(v) and we do not include the contribution here.
\item $\gamma^1_L$ and $\gamma^1_R$ are distinct edges with the same endpoints. Then $\gamma^1_L$ and $\gamma^1_R$
bound the bigon $A^1$ and there are no other edges parallel to $\gamma$. This means there are no 
mixed polygons, contrary to assumption. Therefore the contribution vanishes in this case.
\item $\gamma^1_L$ and $\gamma^1_R$ share a common vertex $q^1$ on $F_k$ but not on $F_1$.
See figure \ref{case4a3}.
 Consider the boundary of $A^1$
on $F_k$, $[q^1_R,q^1_L]$. This interval could either be a single point $q^1$, or the entire boundary $F_k$. If it is 
a single point, then the polygon containing $\gamma^1_L$ and $\gamma^1_R$ has to be inside $A^1$, so the diagonal joining
$p^1_L$ and $p^1_R$ is boundary parallel, contradicting the assumption of a pruned diagram.
In the case $[q^1_R,q^1_L]$ is all of $F_k$, $\gamma^1_L$ and $\gamma^1_R$
belong to a single ``outermost" mixed polygon, and there are $m-1$ bigons
between $F_1$ and $F_k$. Let $i\geq 0$ be the number of remaining vertices on $F_1$ outside $A^1$. Then 
$i+\mu_k+1=\mu_1$ and we also have $m=\mu_k$. We obtain a pruned polygon diagram by removing all $m-1$
bigons, cutting along the concatenated edge $\gamma^1_L\bar{\gamma}^1_R$ and collapsing $\gamma^1_L\bar{\gamma}^1_R$
into a new vertex. The multiplicity of the map $D \mapsto D'$ is $m\mu_k$, as $\mathbf{m}_1$ can be a vertex of the
$m-1$ bigons or $p^1_L$. Therefore we have the contribution
\begin{align}\label{eq26}
\sum_{\substack{i+2m+1 = \mu_1+\mu_k \\ m= \mu_k, i\geq 0}}m {\mu}_kQ_{g,n-1}(i+1,\mmu_{X\setminus \{1,k\}}).
\end{align}

\begin{figure}
\begin{center}
\includegraphics[scale=0.65]{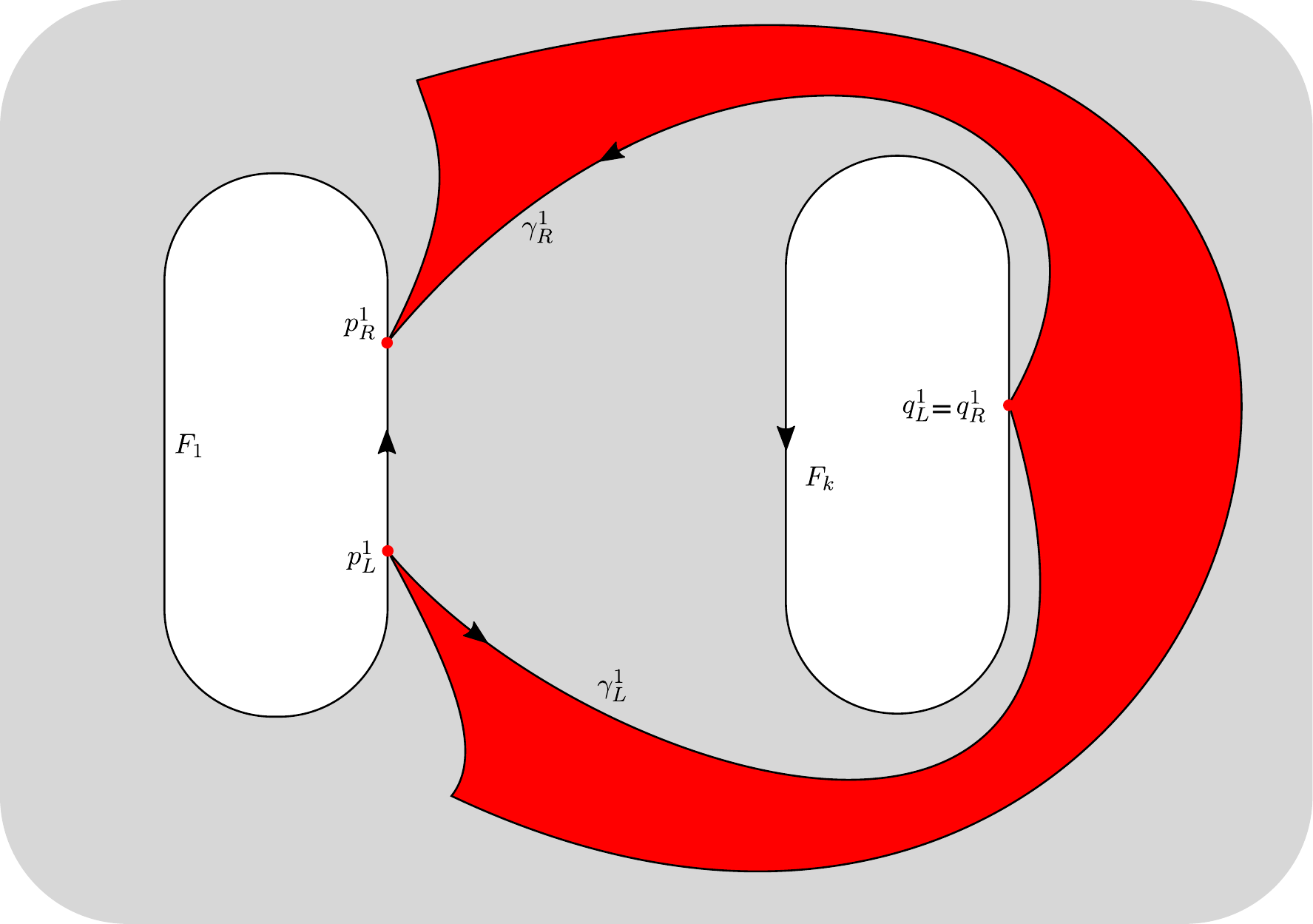}
\caption{Configuration of polygons in case (B)(4)(a)(iii).}
\label{case4a3}
\end{center}
\end{figure}

\item $\gamma^1_L$ and $\gamma^1_R$ share a common vertex $p^1$ on $F_1$ but not on $F_k$. This is the same as the previous
case with $F_1$ and $F_k$ interchanged. The map $D \mapsto D'$ is $\mu_1\mu_k$-to-1. An analogous calculation shows we 
have the contribution
\begin{align}\label{eq27}
\sum_{\substack{i+2m+1 = \mu_1+\mu_k \\ m= \mu_1, i\geq 0}}m {\mu}_kQ_{g,n-1}(i+1,\mmu_{X\setminus \{1,k\}}).
\end{align}
\item \label{caseiii} $\gamma^1_L$ and $\gamma^1_R$ do not share any vertex, and $\gamma^1_L$ belongs to a mixed polygon but
$\gamma^1_R$ does not. There are $m-1\geq 1$ bigons parallel to $\alpha$. Let $i=\mu_i+\mu_k-2m$ be the total number
of remaining vertices on $F_1$ and $F_k$ outside $A^1$. We obtain a pruned polygon diagram $D'$ by removing all $m-1$ bigons,
cutting along $\gamma^1_L$ and collapsing $\gamma^1_L$ into a new vertex. The map $D \mapsto D'$ is $m\mu_k$-to-1. Note that if
we allow $m=1$, this exactly covers the configuration in case (B)(4)(a)(i). %\ref{casei}.
Therefore we have the contribution
\begin{align}\label{eq28}
\sum_{\substack{i+2m = \mu_1+\mu_k \\ 1\leq m\leq \min(\mu_1,\mu_k), i\geq 0}}m {\mu}_k Q_{g,n-1}(i+1,\mmu_{X\setminus \{1,k\}}).
\end{align}
\item $\gamma^1_L$ and $\gamma^1_R$ do not share any vertex, and $\gamma^1_R$ belongs to a mixed polygon but
$\gamma^1_L$ does not. This is almost exactly the same as the previous case, except $\gamma^1_R$ bounds a mixed polygon
to the right, so it cannot be $\gamma$. It follows that $\mathbf{m}_1$ cannot be $p^2_R$ and the map $D \mapsto D'$ is $(m-1)\mu_k$-to-1.
Therefore we have the contribution:
\begin{align}\label{eq29}
\sum_{\substack{i+2m = \mu_1+\mu_k \\ 1\leq m\leq \min(\mu_1,\mu_k), i\geq 0}}(m-1) {\mu}_kQ_{g,n-1}(i+1,\mmu_{X\setminus \{1,k\}})
\end{align}
Note that we allow $m=1$ in the summation index because the summand vanishes for $m=1$ anyway.
\item $\gamma^1_L$ and $\gamma^1_R$ do not share any vertex, and both belong to mixed polygons (possibly the same one). 
Since there could be $1$ or $2$ mixed polygons, we instead define $m\geq 2$ to be $2$ plus the number of bigons in $A^1$. 
We obtain a pruned polygon diagram $D'$ by removing all $m-2$ bigons, cutting the strip $A^1$ from $S$ along $\gamma^1_L$ 
and $\gamma^1_R$, and collapsing $\gamma^1_L$ and $\gamma^1_R$ into two new vertices. Set the decorated marked point
to be the new vertex from collapsing $\gamma^1_L$. Again since $\gamma$ cannot be $\gamma^1_R$, the map $D \mapsto D'$ is 
$(m-1)\mu_k$-to-1. Therefore we have the contribution (again we trivially include $m=1$ in the summation index)
\begin{align}\label{eq210}
\sum_{\substack{i+2m = \mu_1+\mu_k \\ 1\leq m\leq \min(\mu_1,\mu_k), i\geq 0}}(m-1) {\mu}_kQ_{g,n-1}(i+2,\mmu_{X\setminus \{1,k\}}).
\end{align}
\end{enumerate}
\item There are some edges parallel to $\alpha F_k \bar{\alpha}$. This is the same configuration as case (B)(3)(a), %\ref{case3a}, 
just without the
single totally boundary parallel triangle. An analogous calculation shows we have the contribution
\begin{align}\label{eq211}
\sum_{\substack{i+2m = \mu_1+\mu_k \\ m\geq \mu_k+1, i\geq 0}}m {\mu}_kQ_{g,n-1}(i+1,\mmu_{X\setminus \{1,k\}})+
\sum_{\substack{i+2x+2 = \mu_1-\mu_k \\ x\geq 0, i\geq 0}}x {\mu}_kQ_{g,n-1}(i+1,\mmu_{X\setminus \{1,k\}}).
\end{align}
\item There are some edges parallel to $\bar{\alpha} F_1 \alpha$. This is the same configuration as case (B)(3)(b), %\ref{case3b}, 
just without the
single totally boundary parallel triangle. An analogous calculation shows we have the contribution
\begin{align}\label{eq212}
\sum_{\substack{i+2m = \mu_1+\mu_k \\ m\geq \mu_1+1, i\geq 0}}m {\mu}_kQ_{g,n-1}(i+1,\mmu_{X\setminus \{1,k\}})-
\sum_{\substack{i+2x+2 = \mu_k-\mu_1 \\ x\geq 0, i\geq 0}}(x+1) {\mu}_kQ_{g,n-1}(i+1,\mmu_{X\setminus \{1,k\}}).
\end{align}
\end{enumerate}
\end{enumerate}
We have exhausted all possibilities in case (B). The total contribution is the sum of all the expressions \eqref{eq21}--\eqref{eq212}, which we now sum. We drop subscripts $g,n-1$ from $Q$ and $X \setminus \{1,k\}$ from $\mmu$ for convenience.

We first calculate the sum of terms with summation over $m$. 
The $m$-summation terms in \eqref{eq24} and \eqref{eq26}, \eqref{eq25} and \eqref{eq27} combine to give
\begin{align}\label{eq31}
\sum_{\substack{i+2m+1 = \mu_1+\mu_k \\ m\geq \mu_k, i\geq 0}}m & {\mu}_k Q (i+1,\mmu)+
\sum_{\substack{i+2m+1 = \mu_1+\mu_k \\ m\geq \mu_1, i\geq 0}}m {\mu}_k Q(i+1,\mmu)\nonumber \\
=&\sum_{\substack{i+2m = \mu_1+\mu_k \\ m\geq \mu_k, i\geq 1}}m {\mu}_k Q(i,\mmu)+
\sum_{\substack{i+2m = \mu_1+\mu_k \\ m\geq \mu_1, i\geq 1}}m {\mu}_k Q(i,\mmu).
\end{align}
We rewrite the $m$-summation term in \eqref{eq210}, using the substitution $(m',i')=(m-1,i+2)$, and then adding a vacuous summation index $i=1$,
since  $1+2m = \mu_1+\mu_k$ and $m \leq \min(\mu_1,\mu_k)-1$ cannot hold simultaneously. We obtain
\begin{equation}\label{eq32}
\sum_{\substack{i+2m = \mu_1+\mu_k \\ 0\leq m \leq \min(\mu_1,\mu_k)-1, i\geq 1}}m {\mu}_k Q (i,\mmu ).
\end{equation}
%\begin{align}\label{eq32}
%\sum_{\substack{i+2m = \mu_1+\mu_k \\ 1\leq m\leq \min(\mu_1,\mu_k), i\geq 0}}(m-1) {\mu}_kQ_{g,n-1}(i+2,\mmu_{X\setminus \{1,k\}})
%&= \sum_{\substack{i+2m = \mu_1+\mu_k \\ 0\leq m \leq \min(\mu_1,\mu_k)-1, i\geq 2}}m {\mu}_kQ_{g,n-1}(i,\mmu_{X\setminus \{1,k\}})\nonumber \\
%&= \sum_{\substack{i+2m = \mu_1+\mu_k \\ 0\leq m \leq \min(\mu_1,\mu_k)-1, i\geq 1}}m {\mu}_kQ_{g,n-1}(i,\mmu_{X\setminus \{1,k\}})
%\end{align}
Since the index set $\{i+2m = \mu_1+\mu_k, m\geq 0, i\geq 1\}$ is the disjoint union of index sets
$\{i+2m = \mu_1+\mu_k, 0\leq m \leq \min(\mu_1,\mu_k)-1, i\geq 1\}$, $\{i+2m = \mu_1+\mu_k,  m \geq \mu_k, i\geq 1\}$,
and $\{i+2m = \mu_1+\mu_k,  m \geq \mu_i, i\geq 1\}$,
\eqref{eq31} %[=\eqref{eq24} + \eqref{eq25} + \eqref{eq26} + \eqref{eq27}] 
and \eqref{eq32} 
% [=\eqref{eq210}]
 sum to
\begin{align}\label{eq40}
\sum_{\substack{i+2m = \mu_1+\mu_k \\ m\geq 0, i\geq 1}}m {\mu}_k Q (i,\mmu) = \sum_{\substack{i+2m = \mu_1+\mu_k \\ m\geq 1, i\geq 1}}m {\mu}_k Q (i,\mmu),
\end{align}
which is the sum of all $m$-summation terms in \eqref{eq24}, \eqref{eq25}, \eqref{eq26}, \eqref{eq27} and \eqref{eq210}.

The $m$-summation terms in \eqref{eq21} and \eqref{eq40}
% [=\eqref{eq24} + \eqref{eq25} + \eqref{eq26} + \eqref{eq27} + \eqref{eq210}] 
combine to give
\begin{align}\label{eq401}
\left( \sum_{\substack{i+2m = \mu_1+\mu_k \\ m\geq 1, i\geq 0}} +
\sum_{\substack{i+2m = \mu_1+\mu_k \\ m\geq 1, i\geq 1}} \right) m{\mu}_k Q (i,\mmu) 
&= \sum_{\substack{i+2m = \mu_1+\mu_k \\ m\geq 1, i\geq 1}}2m {\mu}_k Q(i,\mmu) +
\sum_{\substack{i+2m = \mu_1+\mu_k \\ m\geq 1, i=0}}m {\mu}_k Q(i,\mmu) \nonumber \\
=&\sum_{\substack{i+2m = \mu_1+\mu_k \\ m\geq 0, i\geq 1}}2m {\mu}_k Q(i,\mmu) +
 \frac{\widetilde{(\mu_1+\mu_k)}}{2}{\mu}_k Q(0,\mmu),
\end{align}
where we use the $\widetilde{\mu}$ notation of definition \ref{tilde_notation} in the final term.
This is the sum of all $m$-summation terms in \eqref{eq21}, \eqref{eq24}, \eqref{eq25}, \eqref{eq26}, \eqref{eq27}, \eqref{eq210}.

We next rewrite the $m$-summation terms from \eqref{eq28} and \eqref{eq29} with the substitution $(m',i') = (m-1,i+1)$ to obtain
\begin{align}\label{int1}
\sum_{\substack{i+2m = \mu_1+\mu_k \\ 1\leq m\leq \min(\mu_1,\mu_k), i\geq 0}} & m {\mu}_kQ(i+1,\mmu)
+\sum_{\substack{i+2m = \mu_1+\mu_k \\ 1\leq m\leq \min(\mu_1,\mu_k), i\geq 0}}(m-1) {\mu}_kQ(i+1,\mmu)\nonumber \\
%&=
%\sum_{\substack{i+2m+1 = \mu_1+\mu_k \\ 0\leq m\leq \min(\mu_1,\mu_k)-1, i\geq 1}} \left[ (m+1) + m \right] 
%{\mu}_kQ(i,\mmu) %\nonumber \\
&= \sum_{\substack{i+2m+1 = \mu_1+\mu_k \\ 0\leq m\leq \min(\mu_1,\mu_k)-1, i\geq 1}} (2m+1) {\mu}_kQ(i,\mmu), 
\end{align}
and similarly with \eqref{eq211}, and \eqref{eq212} to obtain 
\begin{align}\label{int2}
\sum_{\substack{i+2m = \mu_1+\mu_k \\ m\geq \mu_k+1, i\geq 0}} &m {\mu}_kQ(i+1,\mmu)
+\sum_{\substack{i+2m = \mu_1+\mu_k \\ m\geq \mu_1+1, i\geq 0}}m {\mu}_kQ(i+1,\mmu) \nonumber \\
&=
 \left( \sum_{\substack{i+2m+1 = \mu_1+\mu_k \\ m\geq \mu_k, i\geq 1}} + \sum_{\substack{i+2m+1= \mu_1+\mu_k \\ m\geq \mu_1, i\geq 1}} \right) (m+1) {\mu}_kQ(i,\mmu).
\end{align}
Now combining the $m$-summation terms in \eqref{eq22}, \eqref{eq23}, \eqref{int1}, \eqref{int2}
%\eqref{eq28}, \eqref{eq29}, \eqref{eq211}, and \eqref{eq212} 
we obtain
\begin{align}\label{eq41}
\sum_{\substack{i+2m+1 = \mu_1+\mu_k \\ m\geq \mu_k, i\geq 0}} &m {\mu}_kQ(i,\mmu)
+\sum_{\substack{i+2m+1 = \mu_1+\mu_k \\ m\geq \mu_1, i\geq 0}}m {\mu}_kQ(i,\mmu) 
+ \sum_{\substack{i+2m+1 = \mu_1+\mu_k \\ 0\leq m\leq \min(\mu_1,\mu_k)-1, i\geq 1}} (2m+1) {\mu}_kQ(i,\mmu) \nonumber \\
&+ \left( \sum_{\substack{i+2m+1 = \mu_1+\mu_k \\ m\geq \mu_k, i\geq 1}} + \sum_{\substack{i+2m+1= \mu_1+\mu_k \\ m\geq \mu_1, i\geq 1}} \right) (m+1) {\mu}_kQ(i,\mmu) \nonumber \\
=&\sum_{\substack{i+2m+1 = \mu_1+\mu_k \\ m\geq 0, i\geq 1}}(2m+1) {\mu}_kQ(i,\mmu) + 
\left[\sum_{\substack{2m+1 = \mu_1+\mu_k \\ m\geq \mu_k}}
+\sum_{\substack{2m+1 = \mu_1+\mu_k \\ m\geq \mu_1}}\right] m {\mu}_kQ(0,\mmu) \nonumber \\
=&\sum_{\substack{i+2m+1 = \mu_1+\mu_k \\ m\geq 0, i\geq 1}}(2m+1) {\mu}_kQ(i,\mmu)+
\frac{\widetilde{(\mu_1+\mu_k-1)}}{2}{\mu}_kQ(0,\mmu).
\end{align}
This is the sum of all $m$-summation terms in \eqref{eq22}, \eqref{eq23}, \eqref{eq28}, \eqref{eq29}, \eqref{eq211}, and \eqref{eq212}.

Adding \eqref{eq401} and \eqref{eq41}, we have the total of all $m$-summation terms:
\begin{align}\label{eq50}
\sum_{\substack{i+m = \mu_1+\mu_k \\ i\geq 1, m\geq 0}}m {\mu}_kQ (i,\mmu) + \frac{\widetilde{(\mu_1+\mu_k)}}{2}{\mu}_k Q(0,\mmu) +
\frac{\widetilde{(\mu_1+\mu_k-1)}}{2}{\mu}_k Q(0,\mmu)
\end{align}
Now we sum the terms with summation over $x$. These arise in expressions \eqref{eq21}, \eqref{eq22}, \eqref{eq23}, \eqref{eq24}, \eqref{eq25}, \eqref{eq211} and \eqref{eq212}.
The total is 
\begin{align}\label{eq51}
&\sum_{\substack{i+2x = \mu_1-\mu_k \\ x\geq 1, i\geq 0}}x {\mu}_kQ(i,\mmu) 
-\sum_{\substack{i+2x = \mu_k-\mu_1 \\ x\geq 1, i\geq 0}}x {\mu}_kQ(i,\mmu) 
+\sum_{\substack{i+2x-1 = \mu_1-\mu_k \\ x\geq 1, i\geq 0}}x {\mu}_kQ(i,\mmu) \nonumber \\
&-\sum_{\substack{i+2x-1 = \mu_k-\mu_1 \\ x\geq 1, i\geq 0}}(x-1) {\mu}_kQ(i,\mmu) 
+\sum_{\substack{i+2x+1 = \mu_1-\mu_k \\ x\geq 1, i\geq 0}}x {\mu}_kQ(i+1,\mmu) - 
\sum_{\substack{i+2x+1 = \mu_k-\mu_1 \\ x\geq 1, i\geq 0}}x {\mu}_kQ(i+1,\mmu) \nonumber \\
&+\sum_{\substack{i+2x+2 = \mu_1-\mu_k \\ x\geq 0, i\geq 0}}x {\mu}_kQ(i+1,\mmu) -
\sum_{\substack{i+2x+2 = \mu_k-\mu_1 \\ x\geq 0, i\geq 0}}(x+1) {\mu}_kQ(i+1,\mmu) \nonumber \\
=&\sum_{\substack{i+2x = \mu_1-\mu_k \\ x\geq 0, i\geq 0}}x {\mu}_kQ(i,\mmu) 
-\sum_{\substack{i+2x = \mu_k-\mu_1 \\ x\geq 0, i\geq 0}}x {\mu}_kQ(i,\mmu) 
+\sum_{\substack{i+2x+1 = \mu_1-\mu_k \\ x\geq 0, i\geq 0}}(x+1) {\mu}_kQ(i,\mmu) \nonumber \\
&-\sum_{\substack{i+2x+1 = \mu_k-\mu_1 \\ x\geq 0, i\geq 0}}x {\mu}_kQ(i,\mmu) 
+\sum_{\substack{i+2x = \mu_1-\mu_k \\ x\geq 0, i\geq 1}}x {\mu}_kQ(i,\mmu) - 
\sum_{\substack{i+2x = \mu_k-\mu_1 \\ x\geq 0, i\geq 1}}x {\mu}_kQ(i,\mmu) \nonumber \\
&+\sum_{\substack{i+2x+1 = \mu_1-\mu_k \\ x\geq 0, i\geq 1}}x {\mu}_kQ(i,\mmu) -
\sum_{\substack{i+2x+1 = \mu_k-\mu_1 \\ x\geq 0, i\geq 1}}(x+1) {\mu}_kQ(i,\mmu) \nonumber \\
=& \sum_{\substack{i+2x = \mu_1-\mu_k \\ x\geq 0, i\geq 1}}2x {\mu}_kQ(i,\mmu) + \frac{\widetilde{(\mu_1-\mu_k)}}{2} {\mu}_kQ(0,\mmu) \nonumber \\
&+\sum_{\substack{i+2x+1 = \mu_1-\mu_k \\ x\geq 0, i\geq 1}}(2x+1) {\mu}_kQ(i,\mmu) + \frac{\widetilde{(\mu_1-\mu_k+1)}}{2} {\mu}_kQ(0,\mmu) \nonumber \nonumber \\
&- \sum_{\substack{i+2x = \mu_k-\mu_1 \\ x\geq 0, i\geq 1}}2x {\mu}_kQ(i,\mmu) -  \frac{\widetilde{(\mu_k-\mu_1)}}{2} {\mu}_kQ(0,\mmu) \nonumber \\
&- \sum_{\substack{i+2x+1 = \mu_k-\mu_1 \\ x\geq 0, i\geq 1}}(2x+1) {\mu}_kQ(i,\mmu) -  \frac{\widetilde{(\mu_k-\mu_1-1)}}{2} {\mu}_kQ(0,\mmu) \nonumber \\
=&\sum_{\substack{i+x = \mu_1-\mu_k \\ x\geq 0, i\geq 1}}x {\mu}_kQ(i,\mmu) - \sum_{\substack{i+x = \mu_k-\mu_1 \\ x\geq 0, i\geq 1}}x {\mu}_kQ(i,\mmu) \nonumber \\
&+\left(\frac{\widetilde{(\mu_1-\mu_k)}}{2}+\frac{\widetilde{(\mu_1-\mu_k+1)}}{2}- \frac{\widetilde{(\mu_k-\mu_1)}}{2} -\frac{\widetilde{(\mu_k-\mu_1-1)}}{2}\right) {\mu}_kQ(0,\mmu)
\end{align}

It is not hard to verify that for $\mu_1,\mu_k\geq 1$,
\begin{align*}
\mu_1 = 
\frac{\widetilde{(\mu_1+\mu_k)}}{2}+\frac{\widetilde{(\mu_1+\mu_k-1)}}{2} +
\frac{\widetilde{(\mu_1-\mu_k)}}{2}+\frac{\widetilde{(\mu_1-\mu_k+1)}}{2}- \frac{\widetilde{(\mu_k-\mu_1)}}{2} -\frac{\widetilde{(\mu_k-\mu_1-1)}}{2}
\end{align*}
Hence combining \eqref{eq50} and \eqref{eq51} we have the second line of \eqref{qrecursion-eq}.

If $\mu_k = 0$, then there are only two possible configuration of partially boundary parallel polygons. 
Either they form $m$ bigons parallel to $\alpha F_k \bar{\alpha}$, or they form $m-1$ bigons and the outermost edge is parallel to $\alpha F_k \bar{\alpha}$
belongs to a mixed polygon. These two configurations respectively contribute the two terms of 
\[
\sum_{\substack{i+2m=\mu_1 \\ i\geq 0, m\geq 1}}2m Q_{g,n-1}(i,\mmu_{X\setminus \{1,k\}})
+
\sum_{\substack{i+2m=\mu_1 \\ i\geq 0, m\geq 1}}(2m-1) Q_{g,n-1}(i+1,\mmu_{X\setminus \{1,k\}}).
\]
Adding a zero term to the first sum and reparametrising the second, this expression becomes
\begin{align*}
%&\sum_{\substack{i+2m=\mu_1 \\ i\geq 0, m\geq 1}}2m Q_{g,n-1}(i,\mmu_{X\setminus \{1,k\}})+\sum_{\substack{i+2m=\mu_1 \\ i\geq 0, m\geq 1}}(2m-1) Q_{g,n-1}(i+1,\mmu_{X\setminus \{1,k\}}) \nonumber \\
\sum_{\substack{i+2m=\mu_1 \\ i\geq 0, m\geq 0}}& 2m Q_{g,n-1}(i,\mmu_{X\setminus \{1,k\}})+\sum_{\substack{i+2m+1=\mu_1 \\ i\geq 1, m\geq 0}}(2m+1) Q_{g,n-1}(i,\mmu_{X\setminus \{1,k\}}) \nonumber \\
=&\sum_{\substack{i+m = \mu_1 \\ i\geq 1, m\geq 0}}m Q_{g,n-1}(i,\mmu_{X\setminus \{1,k\}})+ \widetilde{\mu}_1Q_{g,n-1}(0,\mmu_{X\setminus \{1,k\}})
\end{align*}
This gives the third line of \eqref{qrecursion-eq}.
\item[(C) $\gamma$ has both ends on $S_1$, is separating, and does not cut off an annulus.]~

The configurations in this case are almost identical to those in case $\mathbf{(A)}$, where $\gamma$ is non-separating. The calculation is
formally identical, we simply substitute $Q_{g_1,|I|+1}(\triangle,\mmu_{I})Q_{g_2,|J|+1}(\square,\mmu_{J})$
in place of $Q_{g-1,n+1}(\triangle,\square,\mmu_{X\setminus \{1\}})$ everywhere. We obtain the last line of \eqref{qrecursion-eq}.
\end{description}
\end{proof}

\subsection{Counts for punctured tori}

With the recursion \eqref{qrecursion-eq} of theorem \ref{qrecursion} in hand, we now obtain the count of pruned polygon diagrams on punctured tori, using the established count for annuli in proposition \ref{basecases}. Then, using proposition \ref{PQ}, we obtain the count of general polygon diagrams.

\begin{proposition}\label{basecase2}
\begin{align*}
Q_{1,1}(\mu_1) &=  
\begin{cases}
\frac{\mu_1^3-\mu_1}{24}, & \mu_1 > 0 \text{ odd} \\
\frac{\mu_1^3+8\mu_1}{24}, & \mu_1 > 0 \text{ even} \\
1, & \mu_1=0
\end{cases} 
\end{align*}
\end{proposition}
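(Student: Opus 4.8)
The plan is to specialize the recursion of Theorem \ref{qrecursion} to the case $(g,n)=(1,1)$, observe that almost every term vanishes, and then evaluate the surviving one-variable sum by elementary power-sum formulas, treating $\mu_1=0$ as a separate base case. First I would dispose of $\mu_1=0$ directly: on $S_{1,1}$ with no marked points the only polygon diagram is the empty one, which is vacuously pruned, so $Q_{1,1}(0)=1$, matching the claim. (The recursion itself presumes a decorated marked point and so applies only for $\mu_1>0$.)

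For $\mu_1>0$ I would substitute $(g,n)=(1,1)$ into \eqref{qrecursion-eq} and argue that only the first line contributes. Since $n=1$ there is no index $k$ with $2\le k\le n$, so the second and third lines (the two sums over $k$) are empty. In the last line, $X\setminus\{1\}=\emptyset$ forces $I=J=\emptyset$, hence $|I|+1=|J|+1=1$; with $g_1+g_2=1$ one of $g_1,g_2$ must be $0$, producing a disc $(0,1)$, which is excluded by the ``no discs or annuli'' condition, so this line vanishes too. What remains is
\[
Q_{1,1}(\mu_1)=\sum_{\substack{i+j+m=\mu_1\\ i\ge 1,\ j,m\ge 0}} m\,Q_{0,2}(i,j)+\frac{\widetilde{\mu}_1}{2}\,Q_{0,2}(0,0).
\]

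Next I would feed in the base values from Proposition \ref{basecases}, namely $Q_{0,2}(i,j)=\overline{i}\,\delta_{i,j}$ and $Q_{0,2}(0,0)=1$. The Kronecker delta collapses the sum to the diagonal $j=i$, where $\overline{i}=i$ since $i\ge 1$, and the constraint becomes $2i+m=\mu_1$, giving
\[
Q_{1,1}(\mu_1)=\sum_{i=1}^{\lfloor \mu_1/2\rfloor} i(\mu_1-2i)+\frac{\widetilde{\mu}_1}{2}.
\]
Finally I would evaluate $\sum_{i=1}^{L} i(\mu_1-2i)=\mu_1\,\tfrac{L(L+1)}{2}-2\,\tfrac{L(L+1)(2L+1)}{6}$ with $L=\lfloor\mu_1/2\rfloor$, splitting on the parity of $\mu_1$: for odd $\mu_1$ one has $L=(\mu_1-1)/2$ and $\widetilde{\mu}_1=0$, yielding $\tfrac{\mu_1^3-\mu_1}{24}$; for even $\mu_1>0$ one has $L=\mu_1/2$ and $\widetilde{\mu}_1=\mu_1$, yielding $\tfrac{\mu_1^3-4\mu_1}{24}+\tfrac{\mu_1}{2}=\tfrac{\mu_1^3+8\mu_1}{24}$.

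There is no deep obstacle here: the computation is entirely routine once the recursion is in hand. The only real content is the bookkeeping that kills every term except the first line of \eqref{qrecursion-eq}, and the careful tracking of the $\widetilde{\mu}_1/2$ contribution, which is precisely what accounts for the gap between the odd and even formulae.
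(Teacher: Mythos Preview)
Your argument is correct and follows essentially the same route as the paper's own proof: specialize the recursion of Theorem~\ref{qrecursion} to $(g,n)=(1,1)$, use $Q_{0,2}(i,j)=\overline{i}\,\delta_{i,j}$ to collapse the sum, and evaluate the resulting one-variable sum by elementary power-sum formulas. The only cosmetic difference is that the paper reindexes by $m$ (with a parity constraint) and appeals to Lemma~\ref{lem-odd-even-power-sums}, whereas you sum over $i$ directly; your explicit treatment of $\mu_1=0$ and your justification that the remaining lines of~\eqref{qrecursion-eq} vanish are both sound.
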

\begin{proof}
For $(g,n)=(1,1)$ the recursion \eqref{qrecursion-eq} reduces to 
\begin{align*}
Q_{1,1} (\mu_1) = \sum_{\substack{i+j+m = \mu_1 \\ i\geq 1, j,m\geq 0}}m Q_{0,2}(i,j) + \frac{\widetilde{\mu}_1}{2}Q_{0,2}(0,0)
\end{align*}
By Proposition \ref{basecases}, $Q_{0,2}(i,j) = \overline{i} \delta_{i,j}$. If $\mu_1 > 0$ is odd, then we have
\[
Q_{1,1} (\mu_1) = \sum_{\substack{2i+m = \mu_1 \\ i,m\geq 1}}m i = \frac{1}{2}\sum_{\substack{0\leq m \leq\mu_1-2 \\ m \text{ odd}}}m (\mu_1-m) 
= \frac{\mu_1}{2} \sum_{\substack{0 \leq m \leq \mu_1 - 2 \\ m \text{ odd}}} m - \frac{1}{2} \sum_{\substack{0 \leq m \leq \mu_1 - 2 \\ m \text{ odd}}} m^2.
\]
Lemma \ref{lem-odd-even-power-sums} gives the two sums immediately, and we obtain
\[
Q_{1,1}(\mu_1) = \frac{\mu_1}{2} \frac{(\mu_1 - 1)^2}{4} - \frac{1}{2} \frac{(\mu_1 - 2)(\mu_2 - 1)\mu_2}{6}
= \frac{\mu_1^3 - \mu_1}{24}.
\]
%\begin{align*}
%Q_{1,1} (\mu_1) &= \sum_{\substack{2i+m = \mu_1 \\ i,m\geq 1}}m i = \frac{1}{2}\sum_{\substack{0\leq m \leq\mu_1-2 \\ m \text{ odd}}}m (\mu_1-m) \\
%&=\frac{\mu_1}{2}S_1^{\text{o}}(\mu_1-2)-\frac{1}{2}S_2^{\text{o}}(\mu_1-2) \\
%&=\frac{\mu_1}{8}(\mu_1^2-1-2(\mu_1-1))-\frac{1}{12}(\mu_1^3-1-3(\mu_1^2-1)+2(\mu_1-1)) \\
%&=\frac{\mu_1^3-\mu_1}{24}
%\end{align*}
If $\mu_1 > 0$ is even, then similarly we have
%by Equation \eqref{eqF2}
\[
%\begin{align*}
Q_{1,1} (\mu_1) 
= \sum_{\substack{2i+m = \mu_1 \\ i,m\geq 1}}m i + \frac{\mu_1}{2} 
= \frac{1}{2}\sum_{\substack{0\leq m \leq\mu_1-2 \\ m \text{ even}}}m (\mu_1-m) + \frac{\mu_1}{2}  
= \frac{\mu_1}{2} \sum_{\substack{0 \leq m \leq \mu_1 - 2 \\ m \text{ even}}} m - \frac{1}{2} \sum_{\substack{0 \leq m \leq \mu_1 - 2 \\ m \text{even}}} m^2 + \frac{\mu_1}{2},
\]
%\end{align*}
and lemma \ref{lem-odd-even-power-sums} then yields
\[
Q_{1,1} (\mu_1) = \frac{\mu_1}{2} \frac{(\mu_1 - 2)\mu_1}{4}
- \frac{1}{2} \frac{(\mu_1 - 2)(\mu_1 - 1)\mu_1}{6}
+ \frac{\mu_1}{2}
= \frac{\mu_1^3 + 8 \mu_1}{24}.
\]
%\begin{align*}
%Q_{1,1} (\mu_1) &= \sum_{\substack{2i+m = \mu_1 \\ i,m\geq 1}}m i + \frac{\mu_1}{2} = \frac{1}{2}\sum_{\substack{0\leq m \leq\mu_1-2 \\ m \text{ even}}}m (\mu_1-m) + \frac{\mu_1}{2} \\
%&=\frac{\mu_1}{2}S_1^{\text{e}}(\mu_1-2)-\frac{1}{2}S_2^{\text{e}}(\mu_1-2)+\frac{\mu_1}{2}\\
%&=\frac{\mu_1}{8}(\mu_1^2-2\mu_1)-\frac{1}{12}(\mu_1^3-3\mu_1^2+2\mu_1)+\frac{\mu_1}{2}\\
%&=\frac{\mu_1^3+8\mu_1}{24}
%\end{align*}
\end{proof}

\begin{proposition}\label{P11count}
\begin{align*}
P_{1,1}(\mu_1)&=\binom{2\mu-1}{\mu} \frac{1}{2\mu-1} \frac{\mu^3 + 3\mu^2 + 20\mu - 12}{12}
\end{align*}
\end{proposition}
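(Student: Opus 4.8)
The plan is to combine the count $Q_{1,1}$ established in Proposition~\ref{basecase2} with the $P$--$Q$ relation of Proposition~\ref{PQ}, and then to evaluate the resulting binomial sums using the identities of Section~\ref{sec:comb_id}. Specialising the clean form \eqref{PQ'} to $(g,n)=(1,1)$ gives
\[
P'_{1,1}(\mu) = \sum_{\nu=0}^{\mu} Q'_{1,1}(\nu)\binom{2\mu}{\mu-\nu},
\]
so for $\mu>0$ (where $P_{1,1}=P'_{1,1}$) I would substitute the values of $Q_{1,1}$ from Proposition~\ref{basecase2}. The $\nu=0$ term contributes $Q'_{1,1}(0)\binom{2\mu}{\mu}=\tfrac12\binom{2\mu}{\mu}=\binom{2\mu-1}{\mu}$, using $\binom{2\mu}{\mu}=2\binom{2\mu-1}{\mu}$.

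For $\nu>0$ I would split the sum according to the parity of $\nu$, inserting $Q_{1,1}(\nu)=\frac{\nu^3-\nu}{24}$ for odd $\nu$ and $Q_{1,1}(\nu)=\frac{\nu^3+8\nu}{24}$ for even $\nu$. Regrouping, the whole expression becomes a fixed rational combination of the four binomial power sums $\sum_{\nu\text{ odd}}\nu^3\binom{2\mu}{\mu-\nu}$, $\sum_{\nu\text{ even}}\nu^3\binom{2\mu}{\mu-\nu}$, $\sum_{\nu\text{ odd}}\nu\binom{2\mu}{\mu-\nu}$ and $\sum_{\nu\text{ even}}\nu\binom{2\mu}{\mu-\nu}$ (the $\nu=0$ summands of the degree-one and degree-three sums vanish, so they may be freely included). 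Each of these is evaluated in closed form by the identities \eqref{comb_id_oe1}, \eqref{comb_id_e3} and \eqref{comb_id_3o}, i.e.\ by Proposition~\ref{almostpoly} with $\alpha=0,1$.

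After these substitutions every term carries a factor $\binom{2\mu}{\mu}=2\binom{2\mu-1}{\mu}$; pulling this out and placing the remaining rational functions over the common denominator $(2\mu-1)(2\mu-3)$ leaves a single degree-four polynomial numerator in $\mu$. The final step is to verify that this numerator equals $(2\mu-3)(\mu^3+3\mu^2+20\mu-12)$, so that one factor of $(2\mu-3)$ cancels and the claimed formula, with only $(2\mu-1)$ surviving in the denominator, drops out; this cancellation is exactly the factorisation of $F_{1,1}$ noted after Theorem~\ref{Pcount}.

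The main (and essentially only) obstacle is the bookkeeping in this last simplification: one must track the extra $\tfrac12$ hidden in $Q'_{1,1}(0)$, keep the denominators $(2\mu-1)$, $(2\mu-3)$ and the constant $24$ consistent when passing to the common denominator, and correctly expand and recombine the four closed forms. Finally I would record the degenerate case $\mu=0$ separately, where the only diagram is the empty one so $P_{1,1}(0)=1$, in agreement with the formula under the convention $\binom{-1}{0}=1$.
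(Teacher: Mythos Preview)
Your proposal is correct and follows essentially the same approach as the paper: apply the $P$--$Q$ relation from Proposition~\ref{PQ} (you use the equivalent primed form~\eqref{PQ'}), substitute the values of $Q_{1,1}$ from Proposition~\ref{basecase2} split by parity, and evaluate the resulting sums with the identities \eqref{comb_id_oe1}--\eqref{comb_id_3o}. Your explicit treatment of the $\nu=0$ term via $Q'_{1,1}(0)=\tfrac12$ and the degenerate case $\mu=0$ is in fact slightly more careful than the paper's own write-up.
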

\begin{proof} By Proposition \ref{PQ}, for $\mu_1 > 0$, and then by proposition \ref{basecase2},
\begin{align*}
P_{1,1}(\mu_1)&=\sum_{\nu_1\leq \mu_1, \nu_1 \text{ odd}}Q_{1,1}(\nu_1)\binom{2\mu_1}{\mu_1-\nu_1} + \sum_{\nu_1\leq \mu_1, \nu_1 \text { even}}Q_{1,1}(\nu_1)\binom{2\mu_1}{\mu_1-\nu_1}\\
&=\sum_{\nu_1\leq \mu_1, \nu_1 \text{ odd}}\frac{\nu_1^3-\nu_1}{24}\binom{2\mu_1}{\mu_1-\nu_1} + \sum_{\nu_1\leq \mu_1, \nu_1 \text{ even}}\frac{\nu_1^3+8\nu_1}{24}\binom{2\mu_1}{\mu_1-\nu_1}
\end{align*}
Using the combinatorial identities \eqref{comb_id_oe1}--\eqref{comb_id_3o}, this simplifies to $\binom{2\mu-1}{\mu} \frac{1}{2\mu-1} \frac{\mu^3 + 3\mu^2 + 20\mu - 12}{12}$.
\end{proof}

We have now proved proposition \ref{Pexamples}, with equations \eqref{eqn:P01_formula}--\eqref{P11} proved in the introduction and  propositions \ref{P02prop}, \ref{P03count}, and \ref{P11count} respectively.

\section{Polynomiality}

We now prove theorem \ref{thm:quasipolynomiality}, that $Q_{g,n}(\mu_1, \ldots, \mu_n)$ is an odd quasi-polynomial for $(g,n) \neq (0,1),(0,2)$. 
The proof follows in the same fashion as proposition \ref{basecase2}.

%\begin{theorem}
%For $(g,n)\neq (0,1)$ or $(0,2)$, $Q_{g,n} (\mu_1, \ldots, \mu_n)$ is an odd quasi-polynomial. 
%\end{theorem}

\begin{proof}[Proof of theorem \ref{thm:quasipolynomiality}]
We use induction on the negative Euler characteristic $-\chi=2g-2+n$. When $2g-2+n=-1$, $(g,n) = (0,3)$ or $(1,1)$, theorem holds by propositions \ref{basecases} and \ref{basecase2}. 
Fix the parities/vanishings of $(\mu_1,\ldots,\mu_n)$. We split the right hand side of the recursion equation \eqref{qrecursion-eq} for $Q_{g,n}$ into $9$ partial sums 
depending on the parities/vanishings of $(i,j)$. We will show that each partial sum is a polynomial. Within each partial sum, since 
the parities/vanishings of $(i,j,\mu_1,\ldots,\mu_n)$ are fixed, $Q_{g-1,n+1}$, $Q_{g,n-1}$, $Q_{g_1,|I|+1}$ and $Q_{g_2,|J|+1}$ are polynomials 
by the induction assumption. Split each polynomial into monomials in $(i,j,\mu_1,\ldots,\mu_n)$. %The terms without $i$
To show odd quasi-polynomiality it is 
sufficient to show that for $(i,j)$ with fixed parities/vanishings, and for odd positive integers $K$ and $L$, the following statements hold. (The degrees $K$ and $L$ remain odd by assumption.)
\begin{enumerate}
\item $A(\mu_1)=\sum_{\substack{i+j+m = \mu_1 \\ i\geq 1, j,m\geq 0}}mi^{K}j^{L}$ is an odd polynomial in $\mu_1$,
\item $B(\mu_1,\mu_k)=\left( \sum_{\substack{i+m = \mu_1+\mu_k \\ i\geq 1, m\geq 0}}m \mu_k i^{K} +
\widetilde{\sum_{\substack{i+x = \mu_1-\mu_k \\ i\geq 1, x\geq 0}}} x \mu_k i^{K} \right)$ is an odd polynomial in $\mu_1$ and $\mu_k$,
\item $C(\mu_1)=\sum_{\substack{i+m = \mu_1 \\ i\geq 1, m\geq 0}}mi^{K}$ is an odd polynomial in $\mu_1$.
\end{enumerate}
For the first statement, we have
\[
A(\mu_1) =\sum_{\substack{i+j+m = \mu_1 \\ i\geq 1, j,m\geq 0}}mi^{K}j^{L} = \sum_{\substack{i+j+m = \mu_1 \\ i,j,m\geq 1}}m i^{K}j^{L} 
= \sum_{\substack{i+j+m = \mu_1 \\ i,j,m\geq 1, m \text{ even}}}m i^{K}j^{L} + \sum_{\substack{i+j+m = \mu_1 \\ i,j,m\geq 1, m \text{ odd}}}m i^{K}j^{L}
\]
Since $(i,j)$ have fixed parities and $K,L$ are odd, it follows from proposition \ref{lemma-odd-induction} that $A(\mu_1)$ an odd polynomial in $\mu_1$.
A similar argument show $C(\mu_1)$ is an odd polynomial in $\mu_1$. As for $B(\mu_1, \mu_2)$, another application of proposition \ref{lemma-odd-induction} that for some odd polynomial $P(x)$,
polynomial $P(x)$,
\begin{align*}
B(\mu_1,\mu_k)=
 \sum_{\substack{i+m = \mu_1+\mu_k \\ i\geq 1, m\geq 0}}m \mu_k i^{K} +
\widetilde{\sum_{\substack{i+x = \mu_1-\mu_k \\ i\geq 1, x\geq 0}}}x \mu_k i^{K} 
=&
\begin{cases} 
\mu_kP(\mu_1+\mu_k)+\mu_kP(\mu_1-\mu_k),\ \mu_1\geq \mu_k \\
\mu_kP(\mu_1+\mu_k)-\mu_kP(\mu_k-\mu_1),\ \mu_1< \mu_k 
\end{cases} \\
=&\ \ \mu_k[P(\mu_1+\mu_k)+P(\mu_1-\mu_k)]
\end{align*}
That $P$ is odd then implies that $B(\mu_1,\mu_k)$ is odd 
with respect to both $\mu_1$ and $\mu_k$.
\end{proof}

If we keep track of the degrees of the polynomials in Proposition \ref{lemma-odd-induction}, we see from the recursion \eqref{qrecursion-eq} 
only the top degree terms in $Q_{g,n-1}$, $Q_{g_1,|I|+1}$ and $Q_{g_2,|J|+1}$ can contribute to the top degree component of $Q_{g,n}^{(X_e,X_o,X_\emptyset)}$.
Going through each term on the right hand side of \eqref{qrecursion-eq}, it is easy to verify by induction that
\begin{itemize}
\item the degree of $Q_{g,n}^{(X_e,X_o,\emptyset)}$ is $6g-6+3n$ (i.e. when $X_\emptyset = \emptyset$ and all variables $\mu_1, \ldots, \mu_n$ are nonzero), 
\item the degree of $Q_{g,n}^{(X_e,X_o,X_\emptyset)}$ is at most $6g-6+3n-|X_0|$ if $X_0$ is non-empty,
\end{itemize}
Furthermore, since the leading coefficient of the resultant odd polynomial in Proposition \ref{lemma-odd-induction} is independent of parities, it 
again follows by induction that for $\mu_1,\ldots,\mu_n \geq 1$, the top degree component of $Q_{g,n}(\mu_1,\ldots,\mu_n)$ is independent of 
the choice of parities of the $\mu_i$'s.

Let $[Q_{g,n}(\mu_1,\ldots,\mu_n)]^{\mathrm{top}}$ denote this common top degree component of the quasi-polynomial $Q_{g,n}$. 
Then for positive $\mu_i$'s the recursion \eqref{qrecursion-eq} truncates to 

\begin{align}\label{toprecur}
[Q_{g,n}&(\mu_1, \ldots, \mu_n)]^{\mathrm{top}} 
= \left[\sum_{\substack{i+j+m = \mu_1 \\ i,j,m\geq 1}}m [Q_{g-1,n+1}(i,j,\mmu_{X\setminus \{1\}})]^{\mathrm{top}}\right]^{\mathrm{top}}  \nonumber\\  
&+ \left[\sum_{2\leq j\leq n}\left( \sum_{\substack{i+m = \mu_1+\mu_k \\ i,m\geq 1}}m \mu_k [Q_{g,n-1}(i,\mmu_{X\setminus \{1,k\}})]^{\mathrm{top}} +
\widetilde{\sum_{\substack{i+x = \mu_1-\mu_k \\ i,x\geq 1}}}x \mu_k [Q_{g,n-1}(i,\mmu_{X\setminus \{1,k\}})]^{\mathrm{top}}
\right)\right]^{\mathrm{top}} \nonumber\\
&+ \left[\sum_{\substack{g_1+g_2=g \\ I\sqcup J = \{2,\ldots, n\}  \\ \text{No discs or annuli}}}
\left(\sum_{\substack{i+j+m=\mu_1 \\ i,j,m\geq 1}}m [Q_{g_1,|I|+1}(i,\mmu_{I})]^{\mathrm{top}}[Q_{g_2,|J|+1}(j,\mmu_{J})]^{\mathrm{top}}\right) \right]^{\mathrm{top}}
\end{align}

We now compare the pruned polygon diagram counts $Q_{g,n}$ to the non-boundary-parallel (i.e. pruned) arc diagram counts $N_{g,n}$ of \cite{DKM2017}. 
We observe from the following two theorems that $N_{g,n}$ satisfies some initial conditions and recursion similar to those of $Q_{g,n}$.
\begin{proposition}[\cite{DKM2017} prop. 1.5]
\label{prop:Ngn_basecase}
\begin{align*}
N_{0,3}(\mu_1,\mu_2,\mu_3) &=
\begin{cases}
\bar{\mu_1}\bar{\mu_2}\bar{\mu_2}, & \mu_1+\mu_2+\mu_3 \text{ even}\\
0, & \mu_1+\mu_2+\mu_3 \text{ odd}
\end{cases}
\quad \quad \text{and} \quad \quad
N_{1,1}(\mu_1) &=
\begin{cases}
\frac{\mu_1^3+20\mu_1}{48}, & \mu_1 > 0 \text{ even}\\
0, & \mu_1 > 0 \text{ odd}\\
1, & \mu_1 = 0.
\end{cases}
\end{align*}
\end{proposition}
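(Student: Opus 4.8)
This is Proposition 1.5 of \cite{DKM2017}; since an arc diagram is precisely a polygon diagram all of whose polygons are bigons, the pruned arc counts $N_{0,3}$ and $N_{1,1}$ can be obtained by direct enumeration in exactly the manner of Propositions \ref{basecases} and \ref{basecase2}, and that is the route I would take. The plan is to treat the two base cases separately: a topological classification for $N_{0,3}$, and a cut-and-sum computation for $N_{1,1}$.

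For $N_{0,3}$, I would first record the parity constraint: every arc contributes two endpoints, so the total $\mu_1+\mu_2+\mu_3$ must be even, which immediately forces $N_{0,3}=0$ in the odd case. When the sum is even, the plan is to classify the non-boundary-parallel arcs on a pair of pants up to the allowed equivalence, exactly as in the analysis preceding Proposition \ref{basecases}: each such arc is isotopic either to an arc joining two distinct boundary components or to a separating arc based at a single boundary component. Writing $a,b,c$ for the numbers of the three non-separating types, one solves $b+c=\mu_1$, $a+c=\mu_2$, $a+b=\mu_3$ when the triangle inequalities hold, and otherwise inserts the forced number of separating arcs; the key point is that this multiset of arc types is \emph{uniquely determined} by $(\mu_1,\mu_2,\mu_3)$ (this is the bigon-only specialisation of cases (vi)--(vii) of the $Q_{0,3}$ computation). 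Because the resulting arc system is then rigid up to isotopy, the only remaining freedom is the placement of the decorated marked points, giving a factor $\mu_i$ on each $F_i$ with $\mu_i>0$ and $1$ when $\mu_i=0$, i.e. $\overline{\mu}_1\overline{\mu}_2\overline{\mu}_3$.

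For $N_{1,1}$, every arc again has both endpoints on the single boundary, so $\mu_1$ must be even and $N_{1,1}=0$ for odd $\mu_1$, while $\mu_1=0$ is the empty diagram counted once. For even $\mu_1>0$ I would mirror Proposition \ref{basecase2}: cut along the arc emanating from the decorated marked point. A non-separating arc reduces the once-holed torus to an annulus, so the count reduces to the annular pruned count $N_{0,2}(i,j)=\overline{i}\,\delta_{i,j}$ (on an annulus every pruned diagram is a family of parallel bigons, so $N_{0,2}=Q_{0,2}$), producing a parity-restricted sum of the form $\sum_{2i+m=\mu_1} (\text{weight})\cdot i$ together with a correction term $\tfrac{\widetilde{\mu}_1}{2}$ of the same shape as in \eqref{qrecursion-eq}. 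Lemma \ref{lem-odd-even-power-sums} then evaluates the power sums $\sum m$ and $\sum m^2$, and collecting terms yields $(\mu_1^3+20\mu_1)/48$.

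The arithmetic in both parts is routine once the topology is pinned down, so the main obstacle is the rigidity/classification input rather than the summation. For $N_{0,3}$ one must genuinely verify that, under the parity hypothesis, the isotopy type of the non-boundary-parallel arc system is forced with no hidden choices beyond the decorated points; for $N_{1,1}$ one must correctly identify the annular base case and, crucially, account for the extra even-$\mu_1$ correction term. With those two facts in hand the stated values follow from Lemma \ref{lem-odd-even-power-sums} (and, if one prefers to recover $P_{1,1}$ downstream, the identities \eqref{comb_id_oe1}--\eqref{comb_id_3o}), exactly as in the $Q$ computations.
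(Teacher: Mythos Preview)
The paper does not prove this proposition; it is quoted verbatim from \cite{DKM2017} and used only as input to compare $[N_{g,n}]^{\mathrm{top}}$ with $[Q_{g,n}]^{\mathrm{top}}$. Your proposed argument is correct and is precisely the route taken in \cite{DKM2017}: the $N_{0,3}$ case is the bigon-only specialisation of the pair-of-pants classification (your cases (vi)--(vii) remark is exactly right, and uniqueness of the arc multiset plus the $\overline{\mu}_i$ factor for the decorated points is the whole content), while $N_{1,1}$ follows from the arc-diagram recursion of Proposition~\ref{prop:Ngn_recursion} together with $N_{0,2}(i,j)=\overline{i}\,\delta_{i,j}$ and Lemma~\ref{lem-odd-even-power-sums}. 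One point worth making explicit when you write it up: the weight in the $N$ recursion is $\tfrac{m}{2}$ with $m$ restricted to be even, not $m$ as in \eqref{qrecursion-eq}; for even $\mu_1$ the parity restriction on $m$ is automatic (since $i=j$), so the computation is literally one half of the first sum in the proof of Proposition~\ref{basecase2} plus the $\tfrac{\mu_1}{2}$ correction, giving $\tfrac{1}{2}\cdot\tfrac{\mu_1^3-4\mu_1}{24}+\tfrac{\mu_1}{2}=\tfrac{\mu_1^3+20\mu_1}{48}$ as claimed.
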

 
\begin{proposition}[\cite{DKM2017} prop. 6.1]\label{initialN}
\label{prop:Ngn_recursion}
For $(g,n) \neq (0,1), (0,2), (0,3)$ and integers $\mu_1 >0$, $\mu_2, \ldots, \mu_n \geq 0$,
\begin{align*}
N_{g,n}(\mu_1, \ldots, \mu_n) &= \sum_{\substack{ i,j,m \geq 0 \\ i+j+m = \mu_1 \\ m \text{ even}}} \frac{m}{2} \; N_{g-1,n+1} (i,j,\mmu_{X\setminus \{1\}}) \\
& + \sum_{\substack{\mu_k>0 \\ 2\leq j\leq n}} \left( \sum_{\substack{i,m \geq 0 \\ i+m = \mu_1 + \mu_k \\ m \text{ even}}} \frac{m}{2} \; \mu_k \; N_{g,n-1} (i,\mmu_{X\setminus \{1,k\}}) + \widetilde{\sum_{\substack{i,m \geq 0 \\ i+m = \mu_1 - \mu_k \\ m \text{ even}}}} \frac{m}{2} \; \mu_k \; N_{g,n-1} (i, \mmu_{X\setminus \{1,k\}}) \right) \\
& + \sum_{\substack{\mu_k=0 \\ 2\leq j\leq n}} \left( \sum_{\substack{i,m \geq 0 \\ i+m = \mu_1 \\ m \text{ even}}} \frac{m}{2} \; N_{g,n-1} (i,\mmu_{X\setminus \{1,k\}})\right) \\
& + \sum_{\substack{g_1 + g_2 = g \\ I \sqcup J = \{2, \ldots, n\} \\ \text{No discs or annuli}}} \sum_{\substack{i,j,m \geq 0 \\ i+j+m = \mu_1 \\ m \text{ even}}} \frac{m}{2} \; N_{g_1, |I|+1} (i, \mmu_I) \; N_{g_2, |J|+1} (j, \mmu_J)
\end{align*}
\end{proposition}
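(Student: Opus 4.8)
The plan is to recognise Proposition \ref{prop:Ngn_recursion} as the specialisation of our polygon argument to the case in which every polygon is a bigon. An \emph{arc diagram} is exactly a polygon diagram all of whose polygons are bigons; collapsing each bigon to an arc, a pruned arc diagram is a disjoint family of properly embedded arcs, none boundary parallel, realising a perfect matching of the marked points $M$, so that each marked point is the endpoint of a unique arc. Since this statement is \cite[prop.~6.1]{DKM2017}, I would reproduce the proof given there, which runs exactly parallel to our proof of Theorem \ref{qrecursion} but is far shorter, because the elaborate sub-case analysis forced by triangles, quadrilaterals and mixed polygons simply disappears.

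Concretely, I would fix the decorated marked point $\mathbf{m}_1$ on $F_1$ (using $\mu_1 > 0$) and let $\gamma$ be the unique arc incident to it. Pruning forces $\gamma$ to be non-boundary-parallel, so $\gamma$ falls into one of the same three cases as before: (A) both endpoints on $F_1$ and non-separating; (B) endpoints on $F_1$ and some $F_k$, or both on $F_1$ cutting off an annulus parallel to $F_k$; (C) both endpoints on $F_1$, separating, not cutting off an annulus. In each case I would take the maximal strip of arcs parallel to $\gamma$ (or to the standard path $\alpha$ in case (B)), cut $S$ along $\gamma$, and remove this parallel block, producing a pruned arc diagram $D'$ on a surface of strictly simpler topology. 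Because all objects are arcs, the parallel block is just a family of $m/2$ nested arcs: it removes exactly two boundary points per arc across the cut, which is precisely why every sum carries the condition ``$m$ even'' and the weight $\frac{m}{2}$, the latter being the number of parallel arcs and hence the number of positions $\mathbf{m}_1$ can occupy in a fibre of $D \mapsto D'$. Case (A) yields the genus-dropping term with $N_{g-1,n+1}$; case (C) yields the separating term $\sum N_{g_1,|I|+1}\,N_{g_2,|J|+1}$ over $g_1+g_2=g$ and $I\sqcup J$ with no discs or annuli; case (B) yields the two boundary-merging sums, the extra factor $\mu_k$ arising from the forgotten position of the decorated point on $F_k$, exactly as in case (C) of Theorem \ref{qrecursion}.

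The main work, as for Theorem \ref{qrecursion}, lives in case (B). Here $\gamma$ may join two distinct boundary components or cut off an annulus parallel to $F_k$; one checks these present the same local picture, organises the parallel arcs into the three families parallel to $\alpha$, $\alpha F_k \bar\alpha$ and $\bar\alpha F_1 \alpha$, and reassembles the contributions into the single tilde-sum, handling the dichotomy $\mu_1 \geq \mu_k$ versus $\mu_1 < \mu_k$ that the notation $\widetilde{\sum}$ encodes (the sum over $i+m=\mu_1-\mu_k$ being vacuous in one regime and carrying a compensating sign in the other), together with the degenerate $\mu_k = 0$ subcase that produces the third line. The one pleasant simplification over Theorem \ref{qrecursion} is that, with only bigons present, the degenerate $i=0$ or $j=0$ configurations need no separate treatment: they are absorbed into the main ``$m$ even, $i,j \geq 0$'' sums, so no stand-alone corrections of the form $\frac{\widetilde{\mu}_1}{2}Q(0,0,\cdots)$ survive. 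The base cases $N_{0,3}$ and $N_{1,1}$ of Proposition \ref{prop:Ngn_basecase} are not needed to establish the recursion itself, but confirm the bookkeeping in the smallest instances.
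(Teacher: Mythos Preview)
The paper does not prove this proposition at all: it is quoted verbatim from \cite{DKM2017} and used as input for the comparison in Proposition~\ref{QN}. Your proposal correctly recognises this and outlines the proof from \cite{DKM2017}, which is indeed the arc-diagram (all-bigon) specialisation of the argument behind Theorem~\ref{qrecursion}; your sketch of the three cases, the reason for the ``$m$ even'' constraint and the weight $m/2$, the role of the $\mu_k$ factor, and the absorption of the degenerate $i=j=0$ terms into the main sums is accurate. One small slip: the $\mu_k$ factor you describe arises in case~(B) of Theorem~\ref{qrecursion} (or equivalently case~(C) of Theorem~\ref{thm:P_recursion}), not case~(C) of Theorem~\ref{qrecursion}, which is the separating case.
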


Using the same argument as for $Q_{g,n}$, the first and third authors with Koyama showed that $N_{g,n}$ is an odd quasi-polynomial such that
\begin{itemize}
\item if $\sum_{i=1}^n \mu_i$ is odd, then $N_{g,n}(\mu_1, \ldots, \mu_n) = 0$,
\item if $\sum_{i=1}^n \mu_i$ is even, then the degree of $N_{g,n}^{(X_e,X_o,\emptyset)}(\mu_1, \ldots, \mu_n)$ is $6g-6+3n$ (i.e. when all $\mu_i$ are nonzero),
\item the degree of $N_{g,n}^{(X_e,X_o,X_0)}$ is at most $6g-6+3n-|X_0|$ if $X_0$ is non-empty.
\end{itemize}

Furthermore the leading coefficients of $N_{g,n}$ encode the intersection numbers on the compactified moduli 
space $\overline{\mathcal{M}}_{g,n}$.
\begin{theorem}[\cite{DKM2017} thm. 1.9]\label{Nintersection}
For $(g,n) \neq (0,1)$ or $(0,2)$, and $\mu_1,\ldots,\mu_n\geq 1$ such that $\sum \mu_i$ is even, 
the polynomial $N_{g,n}^{(X_e,X_o,\emptyset)}(\mu_1, \ldots, \mu_n)$ has degree $6g-6+3n$.
The coefficient $c_{d_1,\ldots,d_n}$ of the highest degree monomial $\mu_1^{2d_1+1} \; \cdots \; \mu_n^{2d_n+1}$
is independent of the partition $(X_e,X_o)$, and
$$c_{d_1,\ldots,d_n} = \frac{1}{2^{5g-6+2n}d_1!\cdots d_n!}\int_{\overline{\mathcal{M}}_{g,n}}\psi_1^{d_1}\cdots\psi_n^{d_n}.$$
\end{theorem}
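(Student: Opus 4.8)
The plan is to prove both assertions—that $N_{g,n}^{(X_e,X_o,\emptyset)}$ has degree $6g-6+3n$ and that its top coefficient is $c_{d_1,\ldots,d_n}=\tfrac{1}{2^{5g-6+2n}d_1!\cdots d_n!}\int_{\overline{\mathcal{M}}_{g,n}}\psi_1^{d_1}\cdots\psi_n^{d_n}$—simultaneously by induction on $2g-2+n$. The mechanism is to extract from the recursion of Proposition~\ref{prop:Ngn_recursion} a closed recursion purely among the top coefficients $c_{d_1,\ldots,d_n}$ and to match it, term by term, against the Witten--Kontsevich (DVV) recursion for $\langle\tau_{d_1}\cdots\tau_{d_n}\rangle=\int_{\overline{\mathcal{M}}_{g,n}}\psi_1^{d_1}\cdots\psi_n^{d_n}$. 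The base cases $(g,n)=(0,3),(1,1)$ follow directly from Proposition~\ref{prop:Ngn_basecase}: the top monomials of $N_{0,3}=\overline{\mu}_1\overline{\mu}_2\overline{\mu}_3$ and of $N_{1,1}=(\mu_1^3+20\mu_1)/48$ have coefficients $1$ and $\tfrac{1}{48}$, agreeing with $2^{-0}\langle\tau_0^3\rangle=1$ and $2^{-1}\langle\tau_1\rangle=\tfrac1{48}$.

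For the inductive step I would first pin down the degree. Substituting the inductive degree $6g'-6+3n'$ into each $N$ on the right of Proposition~\ref{prop:Ngn_recursion} and applying Proposition~\ref{lemma-odd-induction} to each inner summation—in which $m$ and the cut variables $i,j$ carry odd exponents—shows every term is an odd polynomial in $\mu_1$ of total degree exactly $6g-6+3n$ when no $\mu_i$ vanishes, and of strictly smaller degree when $X_\emptyset\neq\emptyset$. Because Proposition~\ref{lemma-odd-induction} guarantees that the leading coefficient of each such power sum is independent of the chosen parities, the same holds for $N_{g,n}$, giving the parity-independence of the top coefficient. The coefficient of $\mu_1^{2d_1+1}\cdots\mu_n^{2d_n+1}$ is then assembled from one contribution per term, where the relevant inner sum $\sum_{i+j+m=\mu_1,\,m\text{ even}}\tfrac m2\,i^{2a+1}j^{2b+1}$ (and its two- and one-variable analogues) has leading coefficient governed by the beta-integral evaluation $\sum_{i_1+\cdots+i_r=n}\prod_l i_l^{k_l}\sim\tfrac{\prod_l k_l!}{(\sum_l k_l+r-1)!}\,n^{\sum_l k_l+r-1}$ underlying Proposition~\ref{lemma-odd-induction}, corrected by the factors of $\tfrac12$ from the even-parity restriction on $m$. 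A short check of the exponents confirms the index matching: $a+b=d_1-2$ in the genus-reducing term and $d_1+d_k-1$ on the merged boundary, exactly as in DVV.

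Collecting these leading coefficients converts Proposition~\ref{prop:Ngn_recursion} into a recursion for $c_{d_1,\ldots,d_n}$. The decisive step is to verify that, after clearing the $2^{5g-6+2n}$ normalization and rewriting factorials as double factorials, this recursion is identical to the DVV recursion: the first (genus-reducing) term matches $\tfrac12\sum_{a+b=d_1-2}\tfrac{(2a+1)!!(2b+1)!!}{(2d_1+1)!!}\langle\tau_a\tau_b\cdots\rangle_{g-1}$, the boundary-merging terms match $\sum_k\tfrac{(2d_1+2d_k-1)!!}{(2d_1+1)!!(2d_k-1)!!}\langle\tau_{d_1+d_k-1}\cdots\rangle$, and the separating term matches the DVV product term, with the excluded ``no discs or annuli'' cases corresponding precisely to the unstable $(0,1),(0,2)$ terms absent from DVV. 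Since the DVV recursion together with $\langle\tau_0^3\rangle=1$ determines all intersection numbers, the inductive hypothesis then forces $c_{d_1,\ldots,d_n}$ into the claimed form. Equivalently, one may identify $[N_{g,n}]^{\mathrm{top}}$ with Norbury's top lattice-count polynomial, i.e.\ the Kontsevich volume, whose leading coefficients are these intersection numbers by the Kontsevich--Witten theorem, and with Mirzakhani's Weil--Petersson volume polynomials.

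The main obstacle is the constant bookkeeping in this matching. One must track how the compound power $2^{5g-6+2n}$ is assembled from the numerous factors of $\tfrac12$—coming from the $\tfrac m2$ weights, the even-parity restriction on $m$, and the $\overline{\,\cdot\,}$ conventions—and one must convert the beta-integral factorials into the double-factorial DVV weights $\tfrac{(2a+1)!!(2b+1)!!}{(2d_1+1)!!}$ and $\tfrac{(2d_1+2d_k-1)!!}{(2d_1+1)!!(2d_k-1)!!}$. Confirming that every term lands with exactly the right DVV coefficient is the delicate part of the argument; once that identification is secured, the induction closes immediately.
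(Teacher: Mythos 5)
You should first note a structural point: this paper does not prove Theorem \ref{Nintersection} at all --- it is quoted from \cite{DKM2017} (thm.~1.9) and used as an external input which, combined with Proposition \ref{QN}, yields Theorem \ref{intersection}. So there is no in-paper proof to compare against, only the paper's methodology in the parallel situation. And that methodology is deliberately different from yours: when the present paper needs intersection numbers for $Q_{g,n}$, it does \emph{not} re-derive them; in Proposition \ref{QN} it compares the truncated top-degree recursions \eqref{toprecur} and \eqref{ntoprecur} term by term, concludes $[Q_{g,n}]^{\mathrm{top}} = 2^{4g+2n-5}[N_{g,n}]^{\mathrm{top}}$, and lets the cited theorem (equivalently the identification of $[N_{g,n}]^{\mathrm{top}}$ with Norbury's lattice-count polynomials and Kontsevich's volumes, as the paper remarks at the start of its Recursions section) carry all the intersection theory. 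Your plan is the self-contained cousin of this: you extract a closed recursion on the top coefficients from Proposition \ref{prop:Ngn_recursion} and match it against DVV, which buys independence from Norbury/Kontsevich-volume results (beyond the fact that DVV plus $\langle\tau_0^3\rangle=1$ determines all $\langle\tau_{d_1}\cdots\tau_{d_n}\rangle$), at the price of the double-factorial bookkeeping you explicitly defer.

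Your skeleton is sound where you carried it out: the base cases are checked correctly ($1$ and $\tfrac{1}{48}$ against $2^{-(5g-6+2n)}\langle\cdot\rangle/\prod d_i!$), the degree count and exponent matching via Proposition \ref{lemma-odd-induction} are right ($a+b=d_1-2$ in the genus-reducing term, merged index $d_1+d_k-1$), and parity-independence of the leading coefficient comes from the same proposition. Two things to tighten. First, to get degree \emph{exactly} $6g-6+3n$ you must rule out cancellation among the top-degree contributions; this is automatic because the leading coefficients in Proposition \ref{lemma-odd} are strictly positive and the inductively known top coefficients are positive, but it should be said. Second, the step you flag as delicate does close, and you should exhibit at least one term; for the boundary-merging term, with $e=d_1+d_k-1$, Proposition \ref{lemma-odd} gives the leading coefficient of $\sum_{i+m=x,\ m \text{ even}} \tfrac{m}{2}\, i^{2e+1}$ as $\tfrac{1}{4}\tfrac{(2e+1)!}{(2e+3)!}$, the combination $\mu_k\bigl[P(\mu_1+\mu_k)+P(\mu_1-\mu_k)\bigr]$ contributes $2\binom{2e+3}{2d_k}$ to the monomial $\mu_1^{2d_1+1}\mu_k^{2d_k+1}$, and since $2^{5g-6+2(n-1)} = 2^{5g-6+2n}/4$, the total contribution to $c_{d_1,\ldots,d_n}$ is
\[
\frac{2\,(2e+1)!\; d_1!\, d_k!}{e!\,(2d_1+1)!\,(2d_k)!}\cdot
\frac{\bigl\langle \tau_{e}\prod_{j\neq 1,k}\tau_{d_j}\bigr\rangle_g}{2^{5g-6+2n}\, d_1!\cdots d_n!}
=\frac{(2d_1+2d_k-1)!!}{(2d_1+1)!!\,(2d_k-1)!!}\cdot
\frac{\bigl\langle \tau_{d_1+d_k-1}\prod_{j\neq 1,k}\tau_{d_j}\bigr\rangle_g}{2^{5g-6+2n}\, d_1!\cdots d_n!},
\]
exactly the DVV weight in your normalisation; the genus-reducing and separating terms work out the same way. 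With those two points made explicit, your induction closes, and it constitutes a legitimate alternative proof of the cited theorem rather than a reconstruction of anything in this paper.
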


By comparing the recursions on top-degree terms, we show they are equal up to a constant factor.
\begin{proposition}\label{QN} 
For $(g,n) \neq (0,1)$ or $(0,2)$, and $\mu_1,\ldots,\mu_n\geq 1$ such that $\sum \mu_i$ is even,
$$[Q_{g,n}(\mu_1,\ldots,\mu_n)]^{\mathrm{top}} = 2^{4g+2n-5}[N_{g,n}(\mu_1,\ldots,\mu_n)]^{\mathrm{top}}.$$
\end{proposition}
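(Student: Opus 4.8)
The plan is to induct on the negative Euler characteristic $2g-2+n$, comparing the truncated recursion \eqref{toprecur} for $[Q_{g,n}]^{\mathrm{top}}$ with the top-degree part of the $N_{g,n}$ recursion of Proposition \ref{prop:Ngn_recursion}, term by term. For the base cases $2g-2+n=1$, that is $(g,n)=(0,3)$ and $(1,1)$, I read off leading terms directly. Propositions \ref{basecases} and \ref{prop:Ngn_basecase} give $[Q_{0,3}]^{\mathrm{top}}=2\mu_1\mu_2\mu_3$ and $[N_{0,3}]^{\mathrm{top}}=\mu_1\mu_2\mu_3$, while Propositions \ref{basecase2} and \ref{prop:Ngn_basecase} give $[Q_{1,1}]^{\mathrm{top}}=\mu_1^3/24$ and $[N_{1,1}]^{\mathrm{top}}=\mu_1^3/48$. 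In both cases the ratio is $2=2^{4g+2n-5}$, as required.

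For the inductive step, write $\kappa(g,n)=4g+2n-5$ and note the arithmetic identities $\kappa(g-1,n+1)=\kappa(g,n-1)=\kappa(g,n)-2$, while for any splitting $g_1+g_2=g$, $I\sqcup J=X\setminus\{1\}$ (with no discs or annuli) one has $\kappa(g_1,|I|+1)+\kappa(g_2,|J|+1)=\kappa(g,n)-3$. All the reduced surfaces appearing in \eqref{toprecur} have strictly smaller $2g-2+n$, and the degenerate or empty contributions (where an index $(0,1)$, $(0,2)$ or negative genus would occur) vanish, so the induction is well founded. Applying the inductive hypothesis replaces every $[Q]^{\mathrm{top}}$ on the right of \eqref{toprecur} by the corresponding $[N]^{\mathrm{top}}$, at the cost of a factor $2^{\kappa(g,n)-2}$ for the terms involving a single $N$ (the first two lines of \eqref{toprecur}) and $2^{\kappa(g,n)-3}$ for the splitting terms. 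It then remains to compare, for each type of term, the residual summation against the matching summation in Proposition \ref{prop:Ngn_recursion}.

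The comparison of summations rests on two observations. First, by Proposition \ref{lemma-odd-induction} each sum of a monomial $\prod_\ell i_\ell^{k_\ell}$ with all $k_\ell$ odd (the cut variable enters with the odd weight exponent $1$) over a lattice simplex with prescribed parities is an odd polynomial whose leading coefficient is independent of those parities; consequently the leading coefficient of a sum over a prescribed collection of parity classes is simply that common coefficient times the number of classes summed. Second, $N_{g',n'}$ vanishes whenever the sum of its arguments is odd. In the $Q$-recursion the cut variable ($m$ or $x$) ranges over all residues with weight $m$, whereas in the $N$-recursion it ranges only over even values with weight $m/2$. For the single-$N$ terms, evenness of the cut variable together with $\sum\mu_i$ even forces the argument-sum of $N$ to be even automatically, so no further parity restriction is imposed: passing from the $Q$-sum to the $N$-sum halves the number of contributing parity classes and halves the weight, a total factor of $4=2^2$; combined with the inductive $2^{\kappa(g,n)-2}$ this gives $2^{\kappa(g,n)}$. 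For the splitting terms the product $N_{g_1}(i,\mmu_I)\,N_{g_2}(j,\mmu_J)$ is nonzero only when $i$ and $j$ separately carry the parities making each factor nonvanishing; relative to evenness of $m$ this is one extra independent parity constraint, so the $N$-sum retains only a quarter of the classes, a total factor of $8=2^3$; combined with $2^{\kappa(g,n)-3}$ this again gives $2^{\kappa(g,n)}$.

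Hence each term of $[Q_{g,n}]^{\mathrm{top}}$ equals $2^{\kappa(g,n)}$ times the matching term of $[N_{g,n}]^{\mathrm{top}}$, and summing over all terms closes the induction. The point demanding the most care is exactly this bookkeeping of parity classes: the splitting terms impose one more independent parity constraint than the non-splitting terms, and the resulting extra factor of $2$ in the summation ratio precisely cancels the fact that the two inductive powers differ by one ($\kappa(g,n)-3$ versus $\kappa(g,n)-2$), so that a single uniform power $2^{4g+2n-5}$ emerges. As a consistency check, this is exactly the ratio $2^{5g-6+2n}/2^{g-1}$ of the normalising constants in Theorems \ref{Nintersection} and \ref{intersection}.
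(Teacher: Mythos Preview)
Your proof is correct and follows essentially the same approach as the paper: induction on $2g-2+n$, verifying the base cases $(0,3)$ and $(1,1)$ directly, and then comparing the top-degree recursions \eqref{toprecur} and the analogue for $N_{g,n}$ term by term via Proposition~\ref{lemma-odd-induction}.

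Where you go beyond the paper is in the explicit bookkeeping of the power of $2$. The paper simply observes that the two recursions differ only by factors of $2$ and the parity restriction on $m$, appeals to Proposition~\ref{lemma-odd-induction} for one extra factor of $2$, and then declares the result ``by induction'' without ever writing down why the constant is $2^{4g+2n-5}$. Your tracking via $\kappa(g,n)=4g+2n-5$, together with the identities $\kappa(g-1,n+1)=\kappa(g,n-1)=\kappa-2$ and $\kappa(g_1,|I|+1)+\kappa(g_2,|J|+1)=\kappa-3$, makes this transparent. In particular, you correctly isolate the one genuinely delicate point: in the separating term the product $N_{g_1}N_{g_2}$ imposes \emph{two} independent parity constraints on $(i,j)$ rather than one, so the $N$-sum retains only one of the four parity classes rather than two; the resulting extra factor of $2$ exactly offsets the discrepancy between $2^{\kappa-3}$ and $2^{\kappa-2}$. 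The paper's proof glosses over this (its phrase ``none \ldots vanish due to parity issues'' is a bit imprecise here), so your argument is the more complete one.

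One phrasing to tighten: when you say the inductive hypothesis ``replaces every $[Q]^{\mathrm{top}}$ \ldots by the corresponding $[N]^{\mathrm{top}}$'', strictly speaking the hypothesis only identifies $[Q]^{\mathrm{top}}$ with $2^{\kappa'}[N]^{\mathrm{top}}$ on the parity classes where the argument-sum of $N$ is even. The replacement on \emph{all} classes goes through because $[Q]^{\mathrm{top}}$ is itself parity-independent (established just before \eqref{toprecur}), so one may transport the identity from the even-sum classes to the others. You implicitly use this, and it is fine, but it is worth saying in one sentence.
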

\begin{proof}
The top degree component of $N_{g,n}$ satisfies the recursion
\begin{align}
[N_{g,n}&(\mu_1, \ldots, \mu_n)]^{\mathrm{top}} = \left[\sum_{\substack{ i,j,m \geq 1 \\ i+j+m = \mu_1 \\ m \text{ even}}} \frac{m}{2} \; [N_{g-1,n+1} (i,j,\mmu_{X\setminus \{1\}})]^{\mathrm{top}}\right]^{\mathrm{top}} \nonumber \\
& + \left[\sum_{\substack{\mu_k>0 \\ 2\leq j\leq n}} \left( \sum_{\substack{i,m \geq 1 \\ i+m = \mu_1 + \mu_k \\ m \text{ even}}} \frac{m}{2} \; \mu_k \; [N_{g,n-1} (i,\mmu_{X\setminus \{1,k\}})]^{\mathrm{top}} + \widetilde{\sum_{\substack{i,m \geq 1 \\ i+m = \mu_1 - \mu_k \\ m \text{ even}}}} \frac{m}{2} \; \mu_k \; [N_{g,n-1} (i, \mmu_{X\setminus \{1,k\}})]^{\mathrm{top}} \right)\right]^{\mathrm{top}} \nonumber \\
& + \left[\sum_{\substack{g_1 + g_2 = g \\ I \sqcup J = \{2, \ldots, n\} \\ \text{No discs or annuli}}} \sum_{\substack{i,j,m \geq 1 \\ i+j+m = \mu_1 \\ m \text{ even}}} \frac{m}{2} \; [N_{g_1, |I|+1} (i, \mmu_I)]^{\mathrm{top}} \; [N_{g_2, |J|+1} (j, \mmu_J)]^{\mathrm{top}}\right]^{\mathrm{top}} \label{ntoprecur}
\end{align}
Since both $[N_{g,n}(\mu_1, \ldots, \mu_n)]^{\mathrm{top}}$ and $[Q_{g,n}(\mu_1, \ldots, \mu_n)]^{\mathrm{top}}$ are independent of parities,
we may assume all $\mu_i$ to be even, so that none of $N_{g-1,n+1} (i,j,\mmu_{X\setminus \{1\}})$, $N_{g,n-1} (i,\mmu_{X\setminus \{1,k\}})$, $N_{g_1, |I|+1} (i, \mmu_I)$, $N_{g_2, |J|+1} (j, \mmu_J)$ vanish due to parity issues.

Compare the right hands sides of equations \eqref{toprecur} and \eqref{ntoprecur}. They are identical except for factors of $2$, and that $N_{g,n}$ sums over
even $m$, while $Q_{g,n}$ sums over both even and odd $m$. Proposition \ref{lemma-odd-induction} implies that for $Q_{g,n}$, the top degree component of the
sum over even $m$ in \eqref{toprecur} is the same as that over odd $m$. This introduces another factor of $2$. Comparing the base cases (proposition \ref{prop:Ngn_basecase} for $N_{g,n}$, propositions \ref{basecases} and \ref{basecase2} for $Q_{g,n})$ and recursions on top degree terms (\eqref{ntoprecur} for $N_{g,n}$ and \ref{toprecur} for $Q_{g,n}$), we obtain by induction the desired result.
\end{proof}

We now prove the remaining theorems from the introduction.

\begin{proof}[Proof of theorem \ref{intersection}]
This follows immediately from theorem \ref{Nintersection} and proposition \ref{QN}.
\end{proof}

\begin{proof}[Proof of theorem \ref{Pcount}]
This follows the same argument as proposition \ref{P03count}.
Recall 
$$Q'_{g,n}(\mu_1,\ldots,\mu_n):=\frac{1}{2^{\sum^n_1 \delta_{\mu_i,0}(\mu_1,\ldots,\mu_n)}}Q_{g,n}(\mu_1,\ldots,\mu_n).$$
Since $Q_{g,n}$ is a quasi-polynomial, so is $Q'_{g,n}$. Separating $Q'_{g,n}$ into monomials we see that the right hand side of equation
\eqref{PQ'}
$$P'_{g,n}(\mu_1, \ldots, \mu_n) = \sum_{0 \leq \nu_i \leq \mu_i} \left(Q'(\nu_1, \ldots, \nu_n)\prod_{i=1}^n \binom{2\mu_i}{\mu_i - \nu_i}\right)$$
is a sum of terms of the form 
\begin{align*}
\prod_{i\in X_e}\left(\sum_{\substack{0 \leq \nu_i \leq \mu_i\\ \nu_i \text{ even}}}\nu_i^{2n_i+1}\binom{2\mu_i}{\mu_i - \nu_i}\right)\cdot
\prod_{i\in X_o}\left(\sum_{\substack{0 \leq \nu_i \leq \mu_i\\ \nu_i \text{ odd}}}\nu_i^{2n_i+1}\binom{2\mu_i}{\mu_i - \nu_i}\right)\cdot
\prod_{i\in X_\emptyset}\binom{2\mu_i}{\mu_i}
\end{align*}
where $n_i\leq 3g-3+n$ as the degree of degree of $Q_{g,n}^{(X_e,X_o,X_0)}$ is at most $6g-6+3n-|X_0|$.
By Proposition \ref{almostpoly}, each
$$\sum_{\substack{1 \leq \nu_i \leq \mu_i\\ \nu_i \text{ fixed parity}}}\nu_i^{2n_i+1}\binom{2\mu_i}{\mu_i - \nu_i}$$ 
is of the form
$$\frac{\binom{2\mu_i}{\mu_i}}{(2\mu_i-1)(2\mu_i-3)\dots(2n-2n_i-1)}P_{n_i}(\mu_i)$$ 
for polynomials $P_{n_i}$. Hence taking a common denominator, 
$$P'_{g,n}(\mu_1, \ldots, \mu_n)=\left(\prod_1^n\frac{\binom{2\mu_i}{\mu_i}}{(2\mu_i-1)(2\mu_i-3)\dots(2n-2(3g-3+n)-1)}\right)F_{g,n}(\mu_1,\ldots,\mu_n)$$
for some polynomial $F_{g,n}$. Since $\binom{2\mu_i}{\mu_i} = 2^{\delta_{\mu_i,0}}\binom{2\mu_i-1}{\mu_i}$, $P_{g,n}$ has the
required form. %This proves Theorem \ref{Pcount}.
\end{proof}

A nice way to express the relationship \eqref{PQ'} is
to package $P_{g,n}$ and $Q_{g,n}$ into generating differentials.
For $g \geq 0$ and $n \geq 1$ let
\begin{align*}
\omega^P_{g,n}(x_1, \ldots, x_n) &=
\sum_{\mu_1, \ldots, \mu_n \geq 0} P'_{g,n}(\mu_1, \ldots, \mu_n) x_1^{-\mu_1 - 1} \cdots x_n^{-\mu_n - 1} \; dx_1\cdots dx_n \\ %\\
\omega^Q_{g,n}(z_1, \ldots, z_n) &= 
\sum_{\nu_1, \ldots, \nu_n \geq 0} Q'_{g,n}(\nu_1, \ldots, \nu_n) z_1^{\nu_1 - 1} \cdots z_n^{\nu_n - 1}\; dz_1 \cdots dz_n. %\\
\end{align*}

Following %Do--Norbury 
\cite{DKM2017} and
\cite{DN2013}, %Do--Koyama--Mathews 
for any quasi-polynomial $f$,
\[
\omega^f(z_1, \ldots, z_n) = 
\sum_{\nu_1, \ldots, \nu_n \geq 0} f(\nu_1, \ldots, \nu_n) z_1^{\nu_1 - 1} \cdots z_n^{\nu_n - 1}\; dz_1 \cdots dz_n
\]
is a meromorphic differential, hence
$\omega^Q_{g,n}$ is a meromorphic differential.
Using techniques from that previous work, one can show the following.
\begin{proposition}
$\omega^Q_{g,n}$ is the pullback of $\omega^P_{g,n}$ under the map $x_i = \frac{(1+z_i)^2}{z_i}$.
\qed
\end{proposition}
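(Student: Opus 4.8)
The plan is to reduce the claimed identity of $n$-variable differentials to a single-variable generating-function computation, exploiting the fact that the kernel relating $P'_{g,n}$ and $Q'_{g,n}$ in \eqref{PQ'} factors over the $n$ boundary components. Substituting $x_i = \frac{(1+z_i)^2}{z_i}$ into $\omega^P_{g,n}$ and then inserting the relation \eqref{PQ'}, namely $P'_{g,n}(\mu_1,\ldots,\mu_n) = \sum_{0\le\nu_i\le\mu_i} Q'_{g,n}(\nu_1,\ldots,\nu_n)\prod_i \binom{2\mu_i}{\mu_i-\nu_i}$, I would interchange the order of the $\mu$- and $\nu$-summations. Because both the binomial kernel and the monomials $x_i^{-\mu_i-1}\,dx_i$ are products over $i$, the pulled-back differential factors as
\[
\sum_{\nu_1,\ldots,\nu_n\ge 0} Q'_{g,n}(\nu_1,\ldots,\nu_n)\prod_{i=1}^n \left(\sum_{\mu_i\ge\nu_i}\binom{2\mu_i}{\mu_i-\nu_i}\,x_i^{-\mu_i-1}\,dx_i\right),
\]
each $x_i$ being read as $\frac{(1+z_i)^2}{z_i}$. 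Comparing with the definition of $\omega^Q_{g,n}$, it therefore suffices to identify each inner sum (up to the overall orientation convention) with the single-variable kernel
\[
\sum_{\mu\ge\nu}\binom{2\mu}{\mu-\nu}\,x^{-\mu-1}\,dx = z^{\nu-1}\,dz, \qquad x=\frac{(1+z)^2}{z}, \quad \nu\ge 0.
\]

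To establish this, I would set $t = 1/x = \frac{z}{(1+z)^2}$ and work with the central-binomial and Catalan generating functions. A direct computation gives $\sqrt{1-4t} = \frac{1-z}{1+z}$, choosing the branch with $\sqrt{1-4t}\to 1$ as $z\to 0$, and hence the Catalan series $C(t) = \frac{1-\sqrt{1-4t}}{2t} = 1+z$. Shifting the summation index by $m=\mu-\nu$ turns the left-hand sum into $t^\nu\sum_{m\ge 0}\binom{2m+2\nu}{m}t^m$, and the standard identity $\sum_{m\ge 0}\binom{2m+s}{m}t^m = \frac{C(t)^s}{\sqrt{1-4t}}$ (with $s=2\nu$) evaluates it to $\frac{t^\nu C(t)^{2\nu}}{\sqrt{1-4t}}$.

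The key simplification is that the substitution linearises this expression: since $t^\nu C(t)^{2\nu} = \frac{z^\nu}{(1+z)^{2\nu}}(1+z)^{2\nu} = z^\nu$, the whole sum collapses to $\frac{z^\nu(1+z)}{1-z}$. Multiplying by the remaining factor $t = \frac{z}{(1+z)^2}$ coming from $x^{-\mu-1}=t^{\mu+1}$, and by $dx = \frac{z^2-1}{z^2}\,dz$, the $(1\pm z)$ factors cancel and one is left with $z^{\nu-1}\,dz$ up to the overall orientation sign fixed by the chosen branch. Feeding this back into the factored sum recovers $\omega^Q_{g,n}$ exactly.

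The main obstacle is precisely the bookkeeping in this last step: one must verify the central-binomial generating-function identity in the form used, and then carefully track the branch of $\sqrt{1-4t}$ together with the orientation of the local coordinate $z$ near $z=0$ (equivalently $x=\infty$), since a careless choice flips the overall sign or produces negative powers of $z$ inconsistent with the expansion of $\omega^Q_{g,n}$ at $z=0$. Once the single-variable kernel is pinned down with the correct conventions, the $n$-variable statement follows immediately from the product structure noted in the first step.
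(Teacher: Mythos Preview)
The paper does not actually give a proof of this proposition: it is stated with a \qed\ and the remark that it follows ``using techniques from that previous work'' (namely \cite{DKM2017} and \cite{DN2013}). So there is nothing in the paper to compare your argument against directly.

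Your approach is the natural one and is essentially correct. Exploiting the product structure of \eqref{PQ'} to reduce to a single-variable kernel identity, and then evaluating that kernel via the generating function $\sum_{m\ge 0}\binom{2m+s}{m}t^m = C(t)^s/\sqrt{1-4t}$ with the substitution $t=z/(1+z)^2$, is exactly the standard computation in this circle of ideas. The simplifications $\sqrt{1-4t}=(1-z)/(1+z)$, $C(t)=1+z$, and $t^\nu C(t)^{2\nu}=z^\nu$ are all correct, and the argument assembles cleanly.

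The one point you should not wave away is the sign. Your own computation gives
\[
\sum_{\mu\ge\nu}\binom{2\mu}{\mu-\nu}x^{-\mu-1}\,dx \;=\; \frac{z^{\nu+1}}{(1+z)(1-z)}\cdot\frac{(z-1)(z+1)}{z^2}\,dz \;=\; -\,z^{\nu-1}\,dz,
\]
so the pullback is literally $(-1)^n\,\omega^Q_{g,n}$. This is \emph{not} a branch ambiguity: the branch of $\sqrt{1-4t}$ is forced (the other branch does not give the Catalan series), and one can check the sign directly on the leading term, since the pullback of $x^{-1}\,dx=d\log x$ is $d\log\bigl((1+z)^2/z\bigr)=\bigl(\tfrac{2}{1+z}-\tfrac{1}{z}\bigr)dz$, whose residue at $z=0$ is $-1$. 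So rather than saying the sign is ``fixed by the chosen branch'', you should state plainly that the identity holds up to the overall factor $(-1)^n$, and either note that this is the standard convention in the topological-recursion literature the paper is invoking, or record it as a harmless discrepancy in the stated proposition.
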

%\begin{proof}
%The map $z\to \frac{(1+z)^2}{z}$ is a double branch cover of the Riemann sphere to it self, $\infty$ is not a branch point. Hence
%$P'_{g,n}(\mu_1,\dots,\mu_n)$ is t 
%\end{proof}

\appendix

\section{Proofs of combinatorial identities}

We now give elementary proofs of the statements from section 
\ref{sec:preliminaries}

Recall proposition \ref{almostpoly} states that there are polynomials $P_\alpha, Q_\alpha$ such that 
\begin{align*}
\sum_{0\leq i\leq n \text{ even}}{i^{2\alpha+1}\binom{2n}{n-i}} &= \frac{\binom{2n}{n}}{(2n-1)(2n-3)\dots(2n-2\alpha-1)}P_{\alpha}(n) \\
\sum_{0\leq i\leq n \text{ odd}}{i^{2\alpha+1}\binom{2n}{n-i}} &= \frac{\binom{2n}{n}}{(2n-1)(2n-3)\dots(2n-2\alpha-1)}Q_{\alpha}(n)  
\end{align*}

\begin{proof}[Proof of proposition \ref{almostpoly}]
For $\alpha=0$, we have 
\begin{align*}
i\binom{2n}{n-i} &= \frac{(2n-1)[(2n-1)-(2n-2i-1)]}{2(2n-1)}\binom{2n}{n-i} \\
&= \frac{[((2n-1)-(n-i-1))((2n-1)-(n-i))-(n-i)(n-i-1)]}{2(2n-1)}\binom{2n}{n-i} \\
&= \frac{1}{2(2n-1)}\left[(n-i+2)(n-i+1)\binom{2n}{n-i+2} - (n-i)(n-i-1)\binom{2n}{n-i}\right] 
\end{align*}
Therefore both sums telescope and 
\begin{align*}
\sum_{0\leq i\leq n \text{ even}}{i\binom{2n}{n-i}} &= \frac{1}{2(2n-1)}(n+2)(n+1)\binom{2n}{n+2} = \frac{n(n-1)}{2(2n-1)}\binom{2n}{n}\\
\sum_{0\leq i\leq n \text{ odd}}{i\binom{2n}{n-i}} &= \frac{1}{2(2n-1)}(n+1)(n)\binom{2n}{n+1} = \frac{n^2}{2(2n-1)}\binom{2n}{n} 
\end{align*}
It follows that $P_0(n)=\frac{n^2-n}{2}$, $Q_0(n)=\frac{n^2}{2}$.
For $\alpha>0$, we have 
\begin{align*}
i^{2\alpha+3}\binom{2n}{n-i} &= n^2 i^{2\alpha+1}\binom{2n}{n-i} - (n+i)(n-i)i^{2\alpha+1}\binom{2n}{n-i}\\
&= n^2 i^{2\alpha+1}\binom{2n}{n-i} - 2n(2n-1)i^{2\alpha+1}\binom{2n-2}{(n-1)-i}
\end{align*}
By induction
\begin{align*}
\sum_{0\leq i\leq n \text{ even}}{i^{2\alpha+3}\binom{2n}{n-i}} = &n^2\frac{\binom{2n}{n}}{(2n-1)(2n-3)\dots(2n-2\alpha-1)}P_{\alpha}(n) \\ 
&- 2n(2n-1)\frac{\binom{2n-2}{n-1}}{(2n-3)\dots(2n-2\alpha-3)}P_{\alpha}(n-1)
\end{align*}
It follows that 
\begin{equation}
\label{recursion_for_P}
P_{\alpha+1}(n)=n^2[(2n-2\alpha-3)P_{\alpha}(n)-(2n-1)P_{\alpha}(n-1)]
\end{equation}
and similarly
\begin{equation}
\label{recursion_for_Q}
Q_{\alpha+1}(n)=n^2[(2n-2\alpha-3)Q_{\alpha}(n)-(2n-1)Q_{\alpha}(n-1)]
\end{equation}
are polynomials in $n$.
\end{proof}

Using $P_0, Q_0$ calculated above and the recursions \eqref{recursion_for_P} and \eqref{recursion_for_Q}, we immediately obtain the identities of equations \eqref{PQ01}--\eqref{comb_id_3o}.

Recall proposition \ref{lemma-odd} states that for positive odd $k_1, k_2$ and fixed parities of $i_1, i_2$, the sum of $i_1^{k_1} i_2^{k_2}$ over $i_1, i_2 \geq 1$ such that $i_1 + i_2$ is an odd polynomial of degree $k_1 + k_2 + 1$, with leading coefficient independent of choice of parities.

\begin{proof}[Proof of proposition \ref{lemma-odd}]
Let $S_k(n)$, $S_k^{\text{e}}(n)$, $S_k^{\text{o}}$ be the $k$-th power sum, the even and odd $k$-th power sums:
\[
S_k(n)=\sum_{1\leq i\leq n} i^k, \quad \quad
S_k^{\text{e}}(n)=\sum_{1\leq i\leq n, \ i \text{ even}} i^k, \quad \quad
S_k^{\text{o}}(n)=\sum_{1\leq i\leq n, \ i \text{ odd}} i^k.
\]
%\begin{align}
%S_k(n)&=\sum_{1\leq i\leq n} i^k \\
%S_k^{\text{e}}(n)&=\sum_{1\leq i\leq n, \ i \text{ even}} i^k \\
%S_k^{\text{o}}(n)&=\sum_{1\leq i\leq n, \ i \text{ odd}} i^k
%\end{align}
Let $B_{i}$ the $i$-th Bernoulli number. A well known argument gives Faulhaber's formula
\begin{equation}\label{eqF1}
S_k(n) = \frac{1}{k+1}\sum_{0\leq i\leq k}(-1)^i\binom{k+1}{i}B_i n^{k+1-i} = n^k + \frac{1}{k+1}\sum_{0\leq i\leq k}\binom{k+1}{i}B_i n^{k+1-i},
\end{equation}
and a similar generating functions argument shows that 
\begin{align}
S_k^{\text{e}}(n)&= n^k + \frac{1}{2(k+1)}\sum_{0\leq i\leq k}2^{i}\binom{k+1}{i}B_i n^{k+1-i}, & \text{ if } n \text{ is even,} \label{eqF2}\\
S_k^{\text{o}}(n)&= n^k + \frac{1}{2(k+1)}\sum_{0\leq i\leq k}2^{i}\binom{k+1}{i}B_i (n^{k+1-i}-1), & \text{ if } n \text{ is odd}\label{eqF3}.
\end{align}
Since the odd Bernoulli numbers are zero except $B_1 = -\frac{1}{2}$, Equations \eqref{eqF1}, \eqref{eqF2}, and \eqref{eqF3} imply that $S_k^{\text{e}}(n)$ and $S_k^{\text{o}}(n)$ are even or odd polynomials depending on the parity of $(k+1)$, with the possible exception of the constant term and $n^k$ term. The coefficient of $n^k$ in $S_k^{\text{e}}(n)$ is $\frac{1}{2}$ if $n$ is even and $0$ otherwise. The coefficient of $n^k$ in $S_k^{\text{o}}(n)$ is $\frac{1}{2}$ if $n$ is odd and $0$ otherwise. If $n$ is even, then the constant terms in $S_k^{\text{e}}(n)$ and $S_k^{\text{o}}(n)$ are both $0$. If $n$ is odd,  then the constant terms in $S_k^{\text{e}}(n)$ and $S_k^{\text{o}}(n)$ are
$$\pm C_k:=\pm\frac{1}{2(k+1)}\sum_{0\leq i\leq k}2^{i}\binom{k+1}{i}B_i.$$ Observe that $C_k/k!$ is the 
coefficient of $x^k$ in $\left(\frac{e^x-1}{2x}\right)\left(\frac{2x}{e^{2x}-1}\right)=\frac{1}{e^x+1}$. Since $\frac{1}{e^{2x}+1}+\frac{1}{e^{-2x}+1}=1$, $C_k = 0$ for positive even $k$.

If the fixed parity of $i_1$ is odd, then  
\[
\sum_{\substack{i_1,i_2\geq 1, \ \ i_1+i_2=n \\ \{i_1,i_2\} \text{ have fixed parities}}}i_1^{k_1}i_2^{k_2}
= \sum_{\substack{1\leq i_1\leq n,\\ i_1\text{ odd}}}i_1^{k_1}(n-i_1)^{k_2} 
=\sum_{0\leq j\leq k_2} (-1)^{k_2-j} \binom{k_2}{j} n^j S_{k_1+k_2-j}^{\text{o}}(n).
\]
%\begin{align*}
%\sum_{\substack{i_1,i_2\geq 1, \ \ i_1+i_2=n \\ \{i_1,i_2\} \text{ have fixed parities}}}i_1^{k_1}i_2^{k_2}
%&= \sum_{\substack{1\leq i_1\leq n,\\ i_1\text{ odd}}}i_1^{k_1}(n-i_1)^{k_2} \\
%&=\sum_{0\leq j\leq k_2} (-1)^{k_2-j} \binom{k_2}{j} n^j S_{k_1+k_2-j}^{\text{o}}(n)
%\end{align*}
Since $(k_1+k_2+1)$ is odd, each term $(-1)^{k_2-j} \binom{k_2}{j} n^j S_{k_1+k_2-j}^{\text{o}}(n)$ is almost an odd polynomial except for 
the constant and $n^{k_1+k_2-j}$ term in $S_{k_1+k_2-j}^{\text{o}}(n)$. The coefficient of $n^{k_1+k_2-j}$ is $\frac{1}{2}$ if
$n$ is odd and $0$ is $n$ is even. Hence the overall contribution to $\sum (-1)^{k_2-j} \binom{k_2}{j} n^j S_{k_1+k_2-j}^{\text{o}}(n)$ is $0$ in both cases, as $\sum_{0\leq j\leq k_2}(-1)^j \binom{k_2}{j} = 0$. The constant term in $S_{k_1+k_2-j}^{\text{o}}(n)$ is $0$ unless $(k_1+k_2-j)$ is odd, i.e., $j$ is odd, so it contributes an odd degree term $(-1)^{k_2-j}\binom{k_2}{j}C_{k_1+k_2-j}n^j$ to $\sum (-1)^{k_2-j} \binom{k_2}{j} n^j S_{k_1+k_2-j}^{\text{o}}(n)$. Therefore overall $\sum (-1)^{k_2-j} \binom{k_2}{j} n^j S_{k_1+k_2-j}^{\text{o}}(n)$ is an odd polynomial of $n$.

Similarly if $i_1$ is even, then 
\[
\sum_{\substack{i_1,i_2\geq 1, \ \ i_1+i_2=n \\ \{i_1,i_2\} \text{ have fixed parities}}}i_1^{k_1}i_2^{k_2}
= \sum_{\substack{1\leq i_1\leq n,\\ i_1\text{ even}}}i_1^{k_1}(n-i_1)^{k_2} 
=\sum_{0\leq j\leq k_2} (-1)^{k_2-j} \binom{k_2}{j} n^j S_{k_1+k_2-j}^{\text{e}}(n)
\]
%\begin{align*}
%\sum_{\substack{i_1,i_2\geq 1, \ \ i_1+i_2=n \\ \{i_1,i_2\} \text{ have fixed parities}}}i_1^{k_1}i_2^{k_2}
%&= \sum_{\substack{1\leq i_1\leq n,\\ i_1\text{ even}}}i_1^{k_1}(n-i_1)^{k_2} \\
%&=\sum_{0\leq j\leq k_2} (-1)^{k_2-j} \binom{k_2}{j} n^j S_{k_1+k_2-j}^{\text{e}}(n)
%\end{align*}
is also an odd polynomial of $n$.

Finally, it follows easily from induction that 
$$\sum_{0\leq i\leq n}\frac{(-1)^i}{x+i}\binom{n}{i} = \frac{n!}{x(x+1)\cdots(x+n)}.$$
Hence by Equations \eqref{eqF1}, \eqref{eqF2} and \eqref{eqF3}, the leading coefficient of $\sum_{\substack{i_1,i_2\geq 1, \ \ i_1+i_2=n \\ \{i_1,i_2\} 
\text{ have fixed parities}}}i_1^{k_1}i_2^{k_2}$, regardless of the choice of parities, is, 
$$\sum_{0\leq j\leq k_2} \frac{(-1)^{k_2-j}}{2(k_1+k_2+1-j)}\binom{k_2}{j} = \left(2(k_2+1)\binom{k_1+k_2+1}{k_2+1}\right)^{-1}> 0.$$ 
Therefore the odd polynomial has degree $(k_1+k_2+1)$, and the leading coefficient is independent of the choice of parities.
\end{proof}

Lemma \ref{lem-odd-even-power-sums} simply gives explicit expressions for $S^o_1 (n), S^o_2 (n), S^e_1 (n)$ and $S^e_2 (n)$, which follow immediately from \eqref{eqF1} and \eqref{eqF2}.

\end{document}